\newtheorem{thm}{Theorem}
\newtheorem{cor}[thm]{Corollary} 
\newtheorem{lem}[thm]{Lemma}
\newtheorem{prop}[thm]{Proposition}
\theoremstyle{definition} 
\newtheorem{defn}[thm]{Definition}
\newtheorem{rem}[thm]{Remark}
\newtheorem{obs}[thm]{Observation}
\newtheorem{exe}[thm]{Example}
\theoremstyle{remark}
\newcommand{\N}{\mathbf N}
\newcommand{\Z}{\mathbf Z}
\newcommand{\C}{\mathbf C}
\newcommand{\Sy}{\textnormal{Sym}}
\newcommand{\Alt}{\textnormal{Alt}}
\newcommand{\su}{\textnormal{sup}}
\title[Conjugacy growth series]
{Conjugacy growth series of some infinitely generated groups}
\author{Roland Bacher}
\address{Institut Fourier,
Universit\'e de Grenoble,
100 rue des maths,
BP74,
\newline
38402 Saint-Martin d'H\`eres,
France}
\email{Roland.Bacher@ujf-grenoble.fr}
\author{Pierre de la Harpe}
\address{Section de math\'ematiques,
Universit\'e de Gen\`eve,
C.P.~64,
\newline
1211 Gen\`eve~4, 
Switzerland}
\email{Pierre.delaHarpe@unige.ch}
\thanks{The visit of the first Author in Geneva, where much of this work was done,
was supported by the Swiss National Science Foundation}
\subjclass[2000]{20F69, 20F65}
\keywords{Conjugacy growth series, finitary symmetric group, wreath product,
alternating group, partition function, Ramanujan congruences}
\date{June 15,  2016}
\begin{document}

\begin{abstract}
It is observed
that the conjugacy growth series of the infinite finitary symmetric group 
with respect to the generating set of transpositions 
is the generating series of the partition function.
Other conjugacy growth series are computed, 
for other generating sets,
for restricted permutational wreath products of finite groups by the finitary symmetric group,
and for alternating groups.
Similar methods are used to compute
usual growth polynomials and conjugacy growth polynomials
for finite symmetric groups and alternating groups, 
with respect to various generating sets of transpositions.
\par
Computations suggest a class of finite graphs,
that we call partition-complete,
which generalizes the class of semi-hamiltonian graphs,
and which is of independent interest.
\par
The coefficients of a series related to the finitary alternating group
satisfy congruence relations analogous to
Ramanujan congruences for the partition function.
They follow from partly conjectural ``generalized Ramanujan congruences'',
as we call them, for which we give numerical evidence in Appendix~\ref{AppendixC}.

\end{abstract}

\maketitle

\begin{itemize}
\item[]
\hskip3cm
\emph{Pour le parfait fl\^aneur, pour l'observateur passionn\'e,}
\item[]
\hskip3cm
\emph{c'est une immense jouissance 
que d'\'elire domicile}
\item[]
\hskip3cm
\emph{dans le nombre, dans l'ondoyant dans le mouvement,}
\item[]
\hskip3cm
\emph{dans le fugitif et l'infini.}
\item[]
\hskip3cm
(Baudelaire, in \emph{Le peintre de la vie moderne} \cite{Baud--63}.)
\end{itemize}

\section{\textbf{Explicit conjugation growth series}}
% section1
\label{explicit...series}

Let $G$ be a group generated by a set $S$.
For $g \in G$, the \textbf{word length} $\ell_{G,S} (g)$ 
is defined to be the smallest non-negative integer $n$
for which there are $s_1, s_2, \hdots, s_n \in S \cup S^{-1}$ 
such that $g=s_1s_2 \cdots s_n$,
and the \textbf{conjugacy length} $\kappa_{G,S}(g)$ is the smallest integer $n$
for which there exists $h$ in the conjugacy class of $g$
such that $\ell_{G,S}(h) = n$.
For $n \in \N$, denote by $\gamma_{G,S}(n) \in \N \cup \{\infty\}$
the number of conjugacy classes  in $G$ 
consisting of elements $g$ with $\kappa_{G,S}(g) = n$
(we agree that $0 \in \N$).
Assuming that the pair $(G,S)$ satisfies the condition
\begin{equation}
% Eq 
\tag{Fin}
\label{Fin}
\gamma_{G,S}(n) \hskip.2cm \text{is finite for all} \hskip.2cm n \in \N ,
\end{equation}
we define the \textbf{conjugacy growth series}
$$                  
C_{G,S}(q) \, = \, \sum_{n =0}^\infty \gamma_{G,S}(n) q^n 
\, = \, \sum_{g \in \text{Conj}(G)} q^{\kappa_{G,S}(g)} \, \in \, \N [[q]] \hskip.1cm .
$$
Here $\sum_{g \in \text{Conj}(G)}$ indicates a summation over a set of representatives in $G$
of the set of conjugacy classes of $G$.
The \textbf{exponential rate of conjugacy growth} is
$
H^{\text{conj}}_{G,S} \, = \, \limsup_{n \to \infty} \frac{ \log \gamma_{G,S}(n) }{n} \hskip.1cm ;
$
note that $\exp( -H^{\text{conj}}_{G,S})$ is the radius of convergence of the series $C_{G,S}(q)$.
\par

In case $G$ is generated by a finite set $S$, Condition (\ref{Fin})
is obviously satisfied, 
so that the formal series $C_{G, S}(q)$ and
the number $H^{\text{conj}}_{G,S}$
are well defined;
they have recently been given some attention, see e.g.\ 
\cite{AnCi}, \cite{BCLM--13}, \cite{Fink--14}, 
\cite{GuSa--10}, \cite{HuOs--13}, \cite[Chap.\ 17]{Mann--12}, \cite{PaPa--15}, \cite{Rivi--10}.
The subject is related to that of counting closed geodesics in compact Riemannian manifolds
\cite{Babe--88},  \cite{CoKn--04}, \cite{Hube--56}, \cite{Knie--83},  \cite{Marg--69}.
\par

When $S$ is finite, denote for $n \in \N$ by $\sigma_{G,S}(n) \in \N$
the number of elements $g \in G$ with $\ell_{G,S}(g) = n$.
In this situation, it is tempting to compare the series $C_{G,S}$ to the
\textbf{growth series}
$$
L_{G,S} (q) \, = \, \sum_{n=0}^\infty \sigma_{G,S} (n) q^n 
\, = \, \sum_{g \in G} q^{\ell_{G,S}(g)} \, \in \, \N [[ q ]] \hskip.1cm .
$$
\par
For finite series, e.g.\ for finite groups, we rather write ``conjugacy growth polynomial''
and ``growth polynomial''.

\vskip.2cm

The first purpose of the present article is to observe that 
there are groups $G$ which are not finitely generated, 
and yet have interesting series $C_{G,S}(q)$ for appropriate infinite generating sets $S$.
Groups of concern here are locally finite infinite symmetric groups, some of their wreath products, and infinite alternating groups.
We are also led to compute and compare polynomials $C_{G,S}$ and $L_{G,S}$
for finite symmetric and alternating groups, for various generating sets $S$.

\vskip.2cm

For a non-empty set $X$, we denote by $\Sy (X)$ the 
\textbf{finitary symmetric group} of $X$,
i.e.\ the group of permutations of $X$ with finite support.
The \textbf{support} of a permutation $g$ of $X$ is the subset
$\su (g) = \{ x \in X \vert g(x) \ne x \}$ of $X$.
Two permutations of $X$ are \textbf{disjoint} if their supports are disjoint
(below, this will be used mainly for cycles).
It is convenient to agree that, for $g,h \in \Sy (X)$, 
\begin{center}
we denote by $gh$
the result of the permutation $h$ \emph{followed by} $g$,
\end{center}
such that $(gh)(x) = g(h(x))$ for all $x \in X$.
For example, for $X = \N$, we have
$(1,2)(2,3) = (1,2,3)$, and not $(1,3,2)$ as  
with the other convention.
\par

The conjugacy class
$$
T_X \, = \,  \left\{ (x, y) \in \Sy(X) \mid
x,y \in X  \hskip.2cm \text{are distinct}
\right\}
 \, \subset \, \Sy (X) 
$$
 of all transpositions in $\Sy (X)$
is a generating set of $\Sy (X)$.
We consider also other generating sets, in particular for $X = \N$
$$
S^{\text{Cox}}_\N \, = \, \{ (i,i+1) \mid i \in \N \} ,
$$
which makes $\Sy (\N)$ look like an infinitely generated irreducible Coxeter group of type $A$.
\par

When $X$ is finite, $\Sy (X)$ is the usual symmetric group of $X$.
For $n \ge 1$ and $X =  \{1, 2, \hdots, n\}$, we write $\Sy (n)$.
The sets of transpositions
$$
S^{\text{Cox}}_n \, = \,  \{ (1,2), (2,3) , \cdots , (n-1,n) \} ,
\hskip.5cm
T_n \, = \,  \{ (i,j) \mid 1 \le i,j \le n,\hskip.1cm  i < j \} 
$$
are particular cases for $\Sy (n)$ of generating sets
which are standard for finite Coxeter groups.
\par

In the following proposition, 
we collect a sample of equalities
that appear again in Proposition \ref{sharpening}
in a more general situation.

\begin{prop}
% 1
\label{firstsample}
Let $S \subset \Sy (\N)$ be a generating set such that
$S^{\text{Cox}}_\N \subset S \subset T_\N$.
For every $n \ge 1$, let $S_n \subset \Sy (n)$ be a generating set such that
$S^{\text{Cox}}_n \subset S_n \subset T_n$. Then
\vskip.2cm
\begin{enumerate}[label=(\roman*)]
\setlength\itemsep{1em}
\item\label{iDEfirstsample}\hskip.5cm
$
C_{\Sy (\N), S}(q) 
   \, = \, \sum_{m=0}^\infty p(m) q^m
   \, = \,  \prod_{j=1}^\infty \frac{1}{1-q^j}  \hskip.1cm ,
$
\item[]
in particular the sequence of coefficients of $C_{\Sy (\N), S}(q)$
if of intermediate growth,
\item\label{iiDEfirstsample}\hskip.5cm
$
C_{\Sy (n), S_n}(q) \, = \, \sum_{k=0}^{n-1} p_{n-k}(n) q^k \hskip.1cm ,
$
\item\label{iiiDEfirstsample}\hskip.5cm
$
\sum_{n=0}^\infty C_{\Sy (n), S_n} (q) t^n \, = \, \prod_{j=1}^\infty \frac{1}{1 - q^{j-1} t^j} \hskip.1cm ,
$
\end{enumerate}
\vskip.2cm
where the partition function $p(n)$ and the second equality of \ref{iDEfirstsample}
are as recalled in Appendix \ref{recallp(n)},
and the number $p_{n-k}(n)$  of partitions of $n$ with $n-k$ positive parts
as in Appendix \ref{recallpk(n)}.
Moreover:
\begin{enumerate}[label=(\roman*)]
\addtocounter{enumi}{3}
\item\label{ivDEfirstsample}
when $n \to \infty$,
the polynomials $C_{\Sy (n), S_n}(q)$ of \ref{iiDEfirstsample}
converge coefficientwise towards the series $C_{\Sy (\N), S}(q)$
of \ref{iDEfirstsample}.
\end{enumerate}
\end{prop}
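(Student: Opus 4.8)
The plan is to base everything on one computation of the conjugacy length on symmetric groups, after which parts \ref{iDEfirstsample}--\ref{iiiDEfirstsample} become bookkeeping with partitions and \ref{ivDEfirstsample} an elementary stabilization argument. Recall first that the conjugacy class of a finitary permutation $g$ is determined by its cycle type, that is, by the finite multiset $\{\ell_1, \ldots, \ell_r\}$ of the lengths $\ell_i \geq 2$ of its non-trivial cycles; for $g \in \Sy(n)$ this is the same datum as a partition of $n$, the fixed points contributing parts equal to $1$. Write $m = \ell_1 + \cdots + \ell_r = |\su(g)|$, let $r$ be the number of non-trivial cycles of $g$, and, when $g \in \Sy(n)$, let $s = r + (n-m)$ be the total number of parts of the associated partition of $n$.

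The key step is the identity
$$
\kappa_{\Sy(\N),S}(g) \, = \, \sum_{i=1}^r (\ell_i - 1) \, = \, m - r
$$
for every $g \in \Sy(\N)$ and every $S$ with $S^{\text{Cox}}_\N \subset S \subset T_\N$, and likewise $\kappa_{\Sy(n),S_n}(g) = m - r = n - s$ for $g \in \Sy(n)$. For the lower bound I would invoke the classical fact that the word length of a permutation of a finite set of size $N$ with respect to the full set of transpositions equals $N$ minus its number of cycles, fixed points included (each right multiplication by a transposition changes the cycle count by $\pm 1$); applied inside a finite set containing $\su(h)$ this gives $\ell_{\Sy(X),T_X}(h) = m - r$, independently of the ambient set, for every $h$ of cycle type $\{\ell_1, \ldots, \ell_r\}$. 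Since $S \subset T_\N$ and $m, r$ are constant on the conjugacy class of $g$, every element of that class has $S$-length at least $m - r$. For the matching upper bound I would take the distinguished representative whose non-trivial cycles are the consecutive blocks $(1,\ldots,\ell_1)(\ell_1+1,\ldots,\ell_1+\ell_2)\cdots$ and write each block as $(a,a+1,\ldots,a+\ell-1) = (a,a+1)(a+1,a+2)\cdots(a+\ell-2,a+\ell-1)$, a product of $\ell - 1$ adjacent transpositions lying in $S^{\text{Cox}}_\N \subset S$; its $S$-length is thus at most $\sum_i(\ell_i - 1)$. The only point requiring a word of care is that in the finite case this representative lies in $\Sy(n)$, which holds because $m \leq n$.

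Granting this, part \ref{iDEfirstsample} follows because $\ell_i \mapsto \ell_i - 1$ is a bijection from the conjugacy classes of $\Sy(\N)$ with $\kappa = n$ onto the partitions of $n$ (the identity class giving the empty partition of $0$), so $\gamma_{\Sy(\N),S}(n) = p(n)$; the product expression is the Euler identity recalled in Appendix \ref{recallp(n)}, and the intermediate growth of the coefficients is the Hardy--Ramanujan estimate for $p(n)$. For \ref{iiDEfirstsample}, a conjugacy class of $\Sy(n)$ whose cycle type is a partition of $n$ into $s$ positive parts has $\kappa = n - s$, so $\gamma_{\Sy(n),S_n}(k) = p_{n-k}(n)$, with $k$ ranging from $0$ (the identity, $s = n$) to $n - 1$ (the $n$-cycle, $s = 1$). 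For \ref{iiiDEfirstsample}, I would encode a partition into positive parts by the multiplicities $a_j \geq 0$ of each part size $j \geq 1$, so that $n = \sum_j j a_j$ and $s = \sum_j a_j$, hence $n - s = \sum_j (j-1)a_j$ and $q^{n-s}t^n = \prod_{j\geq1}(q^{j-1}t^j)^{a_j}$; summing each geometric factor over $a_j \geq 0$ turns $\sum_{n\geq0} C_{\Sy(n),S_n}(q)\, t^n$ into $\prod_{j\geq1}(1-q^{j-1}t^j)^{-1}$, a well-defined element of $\N[[q]][[t]]$ since $q^{j-1}t^j$ has $t$-order $j$.

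Finally, for \ref{ivDEfirstsample} I would prove the sharper statement that each coefficient sequence is eventually constant. Subtracting $1$ from every part is a bijection between the partitions of $n$ into exactly $n-k$ positive parts and the partitions of $k$ into at most $n-k$ parts; when $n \geq 2k$ the bound $n - k$ is vacuous, since a partition of $k$ has at most $k$ parts, so $[q^k]C_{\Sy(n),S_n}(q) = p_{n-k}(n) = p(k) = [q^k]C_{\Sy(\N),S}(q)$ for all $n \geq 2k$, whence coefficientwise convergence. I expect the conjugacy-length identity to be the only genuinely substantive point; everything after it is generating-function manipulation and elementary bijections on partitions.
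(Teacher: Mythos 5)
Your proof is correct, and its overall skeleton (compute the conjugacy length of a class as the weight of the associated partition, then do partition bookkeeping for \ref{iDEfirstsample}--\ref{iiiDEfirstsample} and use $p_{n-k}(n)=p(k)$ for $2k\le n$ in \ref{ivDEfirstsample}) is the same as the paper's. Where you genuinely diverge is in how the key identity $\kappa_{\Sy(X),S}(g)=\sum_i(\lambda_i-1)$ is established. You exploit the specific hypothesis $S^{\text{Cox}}\subset S\subset T$ at both ends: for the upper bound you conjugate to the consecutive-block representative and spell it out in adjacent transpositions, and for the lower bound you invoke the classical reflection-length formula $\ell_{\Sy(N),T_N}=N-\#\{\text{cycles, fixed points included}\}$, proved by the parity argument that multiplication by a transposition changes the cycle count by $\pm1$ (with the correct remark that any word lives in a finite symmetric group, so the bound is independent of the ambient set). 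The paper instead proves the length formula in the greater generality of partition-complete generating sets (Condition (\ref{PC})): the lower bound comes from a multigraph edge-count ($e\ge v-\gamma$, Lemma \ref{lemmafcl}\ref{iiDElemmafcl}) and the upper bound from forests in the transposition graph together with Lemma \ref{arbrecycle} (Lemma \ref{reformlemma3}), and Proposition \ref{firstsample} is then deduced as the special case of Proposition \ref{sharpening}. What your route buys is a shorter, more self-contained argument needing no graph-theoretic machinery beyond the classical cycle-count fact; what it gives up is the generality — it does not yield the statement for arbitrary partition-complete $S$ (e.g.\ the tree examples of Example \ref{examplesPC}), which is the main reason the paper sets things up the way it does.
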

 
For example: 
$$
\aligned
C_{\Sy (2), S_2}(q) \, &= \,  1+q ,
\\
C_{\Sy (3), S_3}(q) \, &= \,   1 + q + q^2 ,
\\
C_{\Sy (4), S_4}(q) \, &= \,   1 + q +2 q^2 + q^3 , 
\\
C_{\Sy (5), S_5}(q) \, &= \,   1 + q + 2 q^2 + 2 q^3 + q^4 ,
\\
C_{\Sy (6), S_6}(q) \, &= \,   1 + q + 2 q^2 + 3 q^3 + 3 q^4 + q^5 \hskip.1cm ,
\\
C_{\Sy (n), S_n}(q) \, &= \, 1 + q  + 2q^2 + \cdots + \lfloor n/2 \rfloor q^{n-2} + q^{n-1}
\hskip.2cm (n \ge 5).
\endaligned
$$

The main ingredients for the proof of Proposition \ref{firstsample}
are the classical Observation \ref{partconjclasses} and Lemma \ref{lemmafcl}.
We use the following standard notation: for an integer $n \ge 0$, we denote by
$\lambda = (\lambda_1, \lambda_2, \cdots, \lambda_k) \vdash n$ a \emph{partition} 
of \emph{weight} $n = \lambda_1 + \lambda_2 + \cdots + \lambda_k$,
with $k \ge 0$ and $\lambda_1 \ge \lambda_2 \ge \cdots \ge \lambda_k \ge 1$.
\par
In Proposition \ref{betterthan1} of Section \ref{sectionproofs}, 
we come back to the convergence of 
$C_{\Sy (n), S_n}(q)$ to $C_{\Sy (\N), S}(q)$.

\begin{obs}
% 2
\label{partconjclasses}
Let $X$ be a non-empty set, finite or infinite.
Denote by  $\vert X \vert$ its cardinality. 
Conjugacy classes in $\Sy (X)$ are in natural bijection
with appropriate sets of partitions.
More precisely,
for each pair $(L,k)$ of non-negative integers with $L+k \le \vert X \vert$,
there is a bijection between the set of partitions of the form
\begin{equation}  
% Eq 
\tag{\ref{partconjclasses}.a}
\label{tag2.a}
\lambda \, = \,  (\lambda_1, \hdots, \lambda_k) \vdash L
\end{equation}
on the one hand,
and conjugacy classes in $\Sy (X)$ 
of elements of the form
\begin{equation}  
% Eq 
\tag{\ref{partconjclasses}.b}
\label{tag2.b}
\aligned
&g \, = \, c_1 c_2 \cdots c_k \in \Sy (X) \hskip.2cm \text{where}
\\
&\hskip.5cm c_i \hskip.2cm \text{is a cycle of some length $\lambda_i+1 \ge 2$ 
for $i = 1, 2, \hdots, k$,}
\\
&\hskip.5cm \text{$c_i$ and $c_{i'}$ are disjoint for $i \ne i'$,}
\\
&\hskip.5cm \text{and therefore} 
\hskip.2cm  \vert \su (g) \vert  - k = L  = \sum_{i=1}^k \lambda_i ,
\endaligned
\end{equation}
on the other hand.
In this article, 
\begin{center}
the length of a \emph{cycle} is at least $2$,
unless otherwise stated;
\end{center}
we always make it explicit when we want to consider \emph{fixed points}
as cycles of length $1$.
\end{obs}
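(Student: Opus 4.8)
The plan is to reduce the statement to the classical description of conjugacy in a symmetric group by cycle type, the only extra work being to accommodate the finitary condition on $X$ and to keep straight the shift by $1$ between the parts of $\lambda$ and the lengths of the cycles of $g$.

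First I would recall the cycle decomposition: any $g \in \Sy(X)$ (support finite by convention) is a product $g = c_1 c_2 \cdots c_k$ of pairwise disjoint cycles $c_i$ of lengths $m_i := \vert \su(c_i) \vert \ge 2$, unique up to the order of the factors, since the $\su(c_i)$ are exactly the $\langle g \rangle$-orbits on $\su(g)$ of size $\ge 2$. Ordering so that $m_1 \ge m_2 \ge \cdots \ge m_k \ge 2$ and putting $\lambda_i := m_i - 1$, one gets a partition $\lambda(g) := (\lambda_1, \dots, \lambda_k)$ with $\lambda_1 \ge \cdots \ge \lambda_k \ge 1$ of weight $L = \sum_i \lambda_i = \sum_i m_i - k = \vert \su(g) \vert - k$, where $k$ is the number of nontrivial cycles. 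Conversely, every partition $(\lambda_1, \dots, \lambda_k) \vdash L$ arises as $\lambda(g)$ for some $g$ of the form \eqref{tag2.b}, provided $\vert X \vert \ge L + k$: choose $L+k$ distinct points of $X$ and string them into $k$ disjoint cycles of lengths $\lambda_1 + 1, \dots, \lambda_k + 1$. This is the one and only place where the hypothesis $L + k \le \vert X \vert$ enters (the case $k = 0$, $L = 0$ giving $g = \mathrm{id}$ and the empty partition).

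Next I would check that $g \mapsto \lambda(g)$ is constant on conjugacy classes and separates them. For invariance, the formula $h c h^{-1} = (h(x_1), \dots, h(x_m))$ for a cycle $c = (x_1, \dots, x_m)$ and $h \in \Sy(X)$ shows that $hgh^{-1} = (h c_1 h^{-1}) \cdots (h c_k h^{-1})$ is again a product of $k$ disjoint cycles with the same multiset of lengths, so $\lambda(hgh^{-1}) = \lambda(g)$. For separation, suppose $\lambda(g) = \lambda(g')$; writing the cycle decompositions with cycles in a fixed non-increasing order of length, $c_i = (x_{i,1}, \dots, x_{i,m_i})$ and $c_i' = (y_{i,1}, \dots, y_{i,m_i})$ with the same $m_i$, the rule $x_{i,j} \mapsto y_{i,j}$ is a bijection $\phi\colon \su(g) \to \su(g')$. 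Since $\vert \su(g) \vert = \vert \su(g') \vert = L+k$, the finite sets $\su(g) \setminus \su(g')$ and $\su(g') \setminus \su(g)$ have equal cardinality, so $\phi$ extends to a permutation $h \in \Sy(X)$ supported on $\su(g) \cup \su(g')$ (fix the complement and use any bijection between the two set-differences). The conjugation formula then gives $h g h^{-1} = g'$. Hence $\lambda$ induces an injection from the conjugacy classes of elements of the form \eqref{tag2.b} into the partitions of the form \eqref{tag2.a}, which by the previous paragraph is surjective; its inverse sends a partition $\lambda$ to the conjugacy class of any element built from $\lambda$ as above (well defined, since two such elements have the same $\lambda(g)$ hence are conjugate).

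The whole argument is classical; the only point that genuinely needs care beyond the textbook finite case is the construction of the conjugating element inside $\Sy(X)$ when $\su(g)$ and $\su(g')$ overlap — one must produce an honest finitely supported bijection of $X$, not merely "identify" the two supports, which is possible exactly because those supports have the same finite size. I expect this, together with the $m_i = \lambda_i + 1$ bookkeeping that turns "partition of $L$ with $k$ parts" into "$k$ cycles of total length $L+k$", to be the only things worth spelling out.
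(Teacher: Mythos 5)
Your proof is correct. The paper states this Observation as a classical fact and offers no proof of its own, so there is no argument to compare against; yours is the standard one, and you correctly isolate the only point needing care in the infinitary setting, namely that the conjugating element must itself lie in $\Sy(X)$ (have finite support), which your matching of the two set differences $\su(g)\smallsetminus\su(g')$ and $\su(g')\smallsetminus\su(g)$ handles.
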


\begin{lem}
% 3
\label{lemmafcl}
Consider two integers $L,k \ge 0$, 
a set $X$ of cardinal at least $L+k$ (possibly infinite),
an element $g \in \Sy (X)$ product of $k$ disjoint cycles with $\vert \su (g) \vert = L+k$, 
and the corresponding partition $\lambda \vdash L$ in $k$ parts,
as in Observation \ref{partconjclasses}.
\begin{enumerate}[noitemsep,label=(\roman*)]
\item\label{iDElemmafcl}
There exist transpositions $s_1, \hdots, s_L \in \Sy (X)$
such that $g = s_1 \cdots s_L$ and $\su (s_l) \subset \su (g)$
for all $l \in \{1, \hdots, L\}$.
\item\label{iiDElemmafcl}
There exist transpositions $t_1, \hdots, t_M \in \Sy (X)$ 
such that $g = t_1 \cdots t_M$
if and only if $M \ge L$ and $M-L$ is even.
\end{enumerate}
Suppose moreover that $X$ is given together with 
trees $T_1, \hdots, T_k$ with the following properties:
for $i \in \{1, \hdots, k\}$, the vertex set of $T_i$ is a subset of $X$
of cardinality $\lambda_i + 1$, 
and these subsets are disjoint from each other.
Let $\big\{ \{x_1, x'_1\}, \hdots, \{x_L, x'_L \} \big\}$
be an enumeration of the edges of the forest  $\bigcup_{i=1}^k T_i$.
\begin{enumerate}[noitemsep,label=(\roman*)]
\addtocounter{enumi}{2}
\item\label{iiiDElemmafcl}
The product $h = (x_1, x'_1) \cdots (x_L, x'_L)$ is conjugate
to $g$ in $\Sy (X)$.
\end{enumerate}
\end{lem}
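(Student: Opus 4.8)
The plan is to handle the three assertions separately, \ref{iiiDElemmafcl} being the only substantive one. For \ref{iDElemmafcl} it is enough to treat each cycle of $g$ in isolation, the cycles being disjoint with supports inside $\su (g)$: if $c = (a_0, a_1, \dots, a_m)$ is a cycle, a direct telescoping check in the convention $(fg)(x) = f(g(x))$ shows $c = (a_0,a_1)(a_1,a_2)\cdots(a_{m-1},a_m)$, a product of $m$ transpositions all supported inside $\su(c) \subset \su(g)$; concatenating these expressions for $c_1, \dots, c_k$ writes $g$ as a product of $L = \sum_i \lambda_i$ transpositions supported in $\su(g)$.

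For \ref{iiDElemmafcl} I would use a monovariant. For $f \in \Sy(X)$ put $r(f) = \vert Y\vert - c(f)$, where $Y$ is any finite subset of $X$ containing $\su(f)$ and $c(f)$ counts the cycles of $f$ acting on $Y$, fixed points in $Y$ included; enlarging $Y$ by a point raises both terms by $1$, so $r(f)$ is well defined. The elementary fact that left multiplication by a transposition either fuses two cycles into one or splits one cycle into two gives $\vert r(tf) - r(f)\vert = 1$ for every transposition $t$. Hence, if $g = t_1 \cdots t_M$, then along the partial products $\mathrm{id}, t_M, t_{M-1}t_M, \dots, t_1\cdots t_M = g$ the integer $r$ passes from $0$ to $r(g)$ in $M$ steps of size $1$, forcing $M \ge r(g)$ and $M \equiv r(g) \pmod 2$. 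By Observation \ref{partconjclasses} the element $g$ acts on its support of size $L+k$ with no fixed points and exactly $k$ cycles, so $r(g) = (L+k) - k = L$, which gives the necessary condition. It is sufficient too: \ref{iDElemmafcl} realizes $g$ with $L$ transpositions, and for $M \ge L$ with $M-L$ even one appends $(M-L)/2$ copies of $(a,b)(a,b) = \mathrm{id}$ for some distinct $a,b \in X$ (if $\vert X\vert \le 1$ there is nothing to prove).

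For \ref{iiiDElemmafcl} I would induct on the total number of edges $L$ of the forest $\bigcup_i T_i$, which amounts to the classical fact that the product of the edge-transpositions of a tree, taken in any order, is a full cycle. If $L = 0$ then every part would vanish, so $k = 0$ and $h = g = \mathrm{id}$. If $L \ge 1$, choose a tree with an edge, say $T_1$, a leaf $v$ of $T_1$ with neighbour $u$, and write the edge $\{u,v\}$ as the $j$-th pair in the enumeration; the vertex $v$ occurs in no other edge. Split $h = A\,(u,v)\,B$, where $A$ is the product of the first $j-1$ transpositions and $B$ of the last $L-j$, both fixing $v$. Conjugation gives $(u,v)B = B\,(B^{-1}(u), v)$, hence $h = (AB)(w,v)$ with $w = B^{-1}(u)$, where $AB$ is the analogous product for the forest obtained from $\bigcup_i T_i$ by deleting $v$ and the edge $\{u,v\}$ (and discarding the resulting isolated vertex when $\lambda_1 = 1$). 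Since $B$ preserves $V(T_1)\setminus\{v\}$ we have $w \in V(T_1)\setminus\{v\}$; by the induction hypothesis $AB$ has cycle type $(\lambda_1, \lambda_2+1, \dots, \lambda_k+1)$ — first part absent if $\lambda_1 = 1$ — so in $AB$ the point $w$ lies on a cycle of length $\lambda_1$ and $v$ is fixed (or both $w$ and $v$ are fixed when $\lambda_1 = 1$). Composing with $(w,v)$ inserts $v$ next to $w$ in that cycle, or creates the $2$-cycle $(u,v)$ when $\lambda_1=1$, so $h$ acquires cycle type $(\lambda_1+1, \lambda_2+1, \dots, \lambda_k+1)$, the cycle type of $g$; since two elements of $\Sy(X)$ with equal cycle type are conjugate, $h$ is conjugate to $g$.

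The telescoping identity in \ref{iDElemmafcl} and the cycle-type bookkeeping in \ref{iiiDElemmafcl} are routine. The one delicate point, and the main obstacle, is the non-commutativity in \ref{iiiDElemmafcl}: the leaf edge $(u,v)$ typically sits in the middle of the product and must be carried past $B$, which is exactly what the identity $(u,v)B = B\,(B^{-1}(u), v)$ accomplishes, using only that $v$ lies outside the support of $B$.
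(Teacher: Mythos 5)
Your proofs of \ref{iDElemmafcl} and \ref{iiDElemmafcl} are correct. Part \ref{iDElemmafcl} is the same telescoping that the paper obtains by iterating Lemma \ref{twocycles}. For \ref{iiDElemmafcl} you take a genuinely different route: the paper proves the lower bound $M\ge L$ by building a multigraph on $\bigcup_{\nu}\su(t_\nu)$, discarding the components disjoint from $\su(g)$, observing that each remaining component contains some $\su(c_i)$, and invoking the inequality (edges) $\ge$ (vertices) $-$ (components), with the parity obtained separately from the signature of $g$; you instead track the monovariant $r(f)=\vert Y\vert-c(f)$, which changes by exactly $\pm1$ under left multiplication by a transposition, and this single argument delivers the inequality and the parity at once. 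Both are valid; yours is more economical, while the paper's version makes visible the forest structure that motivates Definition \ref{PCall}. (Both treatments of the converse direction implicitly need $\vert X\vert\ge2$ to pad with a repeated transposition; this degenerate point is shared with the paper and harmless.)

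For \ref{iiiDElemmafcl} you run the same leaf-peeling induction as the paper's Lemma \ref{arbrecycle}\ref{2DEarbrecycle}, dealing with the fact that the leaf edge sits in the middle of the product via the identity $(u,v)B=B\,(B^{-1}(u),v)$, where the paper instead replaces the product by a cyclic conjugate. One step is stated too quickly: from the induction hypothesis as you formulate it, namely that $AB$ has cycle type $(\lambda_1,\lambda_2+1,\dots,\lambda_k+1)$, together with $w\in V(T_1)\smallsetminus\{v\}$, you conclude that $w$ lies on a cycle of length $\lambda_1$. The cycle type alone does not locate $w$: if, say, $\lambda_2+1=\lambda_1$, nothing in that inductive statement prevents $w$ from lying on the cycle arising from $T_2$, and then multiplying by $(w,v)$ would produce the wrong type. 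What saves the argument is that edge-transpositions of distinct trees have disjoint supports and commute, so $AB$ is the commuting product of the edge products of $T_1-v,T_2,\dots,T_k$, and its restriction to $V(T_1)\smallsetminus\{v\}$ is exactly the edge product of the tree $T_1-v$; applying the induction hypothesis to that single tree (or, better, strengthening the inductive statement to ``the product of the edge transpositions of a tree, in any order, is a cycle whose support is the whole vertex set'', which is precisely the paper's Lemma \ref{arbrecycle}\ref{2DEarbrecycle}) shows that $w$ lies on a $\lambda_1$-cycle supported in $V(T_1)\smallsetminus\{v\}$, while $v$ is fixed because it meets no edge of the reduced forest. With this one-line repair your argument is complete and coincides in substance with the paper's.
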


We postpone until Section  \ref{sectionproofs}
the proofs of these, and of  further propositions in the present section.
Before we can state more general cases 
of some of the equalities of Proposition \ref{firstsample},
we introduce two definitions and provide examples.

\begin{defn}
% 4
\label{transpgraph}
For a set $S$ of transposition of a set $X$, 
the \textbf{transposition graph} $\Gamma (S)$ has
vertex set $X$ and 
edge set those pairs $\{x,y\} \subset X$ for which
the transposition $(x,y)$ is in $S$.
\end{defn}

But for their names, these graphs appear in \cite{Serg--93}.
It is well-known and easy to check (Lemma \ref{arbrecycle}) that
\begin{equation}
% Eq 
\tag{GC}
\label{GC}
\aligned
&\text{the group $\Sy (X)$ is generated by $S$}
\\
&\text{if and only if the graph $\Gamma (S)$ is connected} .
\endaligned
\end{equation}

\begin{defn}
% 5
\label{PCall}
For a set $X$, 
a set $S$ of transpositions of $X$
is \textbf{partition-complete} if it satisfies the following condition:
\begin{equation}
% Eq 
\tag{PC}
\label{PC}
\aligned
&\text{the transposition graph $\Gamma (S)$ is connected and, }
\\
&\text{for every partition $\lambda = (\lambda_1, \hdots, \lambda_k) \vdash L$
   such that $L+k \le \vert X \vert$,}
\\
&\text{$\Gamma (S)$ contains a forest consisting of $k$ trees}
\\
&\text{having respectively $\lambda_1+1, \hdots, \lambda_k+1$ vertices.}
\endaligned
\end{equation}
The graph $\Gamma (S)$ itself is partition-complete when $S$ is so.
\end{defn}

\begin{exe}
% 6
\label{examplesPC}
When $X = \{1, \hdots, n\}$,
sets of transpositions satisfying Condition (\ref{PC}) include 
sets $S$ such that
$S^{\text{Cox}}_n \subset S \subset T_n$, 
and also those for which $\Gamma (S)$
is one of the Dynkin graphs $\operatorname{D}_{2n+1}$ with $n \ge 2$, 
or  $\operatorname{E}_7$ or $\operatorname{E}_8$.
\par
But if $S$ is such that $\Gamma (S)$ 
is one of $\operatorname{D}_{2n}$ with $n \ge 2$, or $\operatorname{E}_6$,
then $S$ does not satisfy Condition (\ref{PC}),
because $\operatorname{D}_{2n}$ does not contain $n$ disjoint trees
with two vertices each,
and $\operatorname{E}_6$ does not contain two disjoint trees
with three vertices each.
\par
When $X$ is finite, 
$S$ is partition-complete as soon as 
the graph $\Gamma (S)$ is semi-hamiltonian;
% Voir Walcometc page 162
recall that a graph is semi-hamiltonian [respectively hamilto\-nian]
if it contains a path [respectively a cycle]  containing every vertex exactly once.
Condition (\ref{PC}) for a graph
can be seen as a weakening of the property
of being semi-hamiltonian.
\par
When $X$ is infinite, Condition (\ref{PC}) is equivalent to
(\ref{PCinfinity}):
\begin{equation}
% Eq 
\tag{PC$_\infty$}
\label{PCinfinity}
\aligned
& \text{$S$ generates $\Sy (X)$ and, for all $n \ge 1$,}
\\
& \text{the graph $\Gamma (S)$ contains a disjoint union}
\\
& \text{of $n$ trees with at least $n$ vertices each.}
\endaligned
\end{equation}
\par

When $X = \N$, 
here are two families of examples of sets $S$ satisfying Condition (\ref{PCinfinity}).
The first is that of sets of transpositions
of which the transposition graph contains arbitrarily long segments;
this family contains sets $S$
such that $S^{\text{Cox}}_\N \subset S \subset T_\N$.
For a set of the second family, choose an  increasing sequence $(k_n)_{n \ge 1}$
of positive integers such that $k_{n+2}-k_{n+1} > k_{n+1}-k_n$ for all $n \ge 1$;
define then $S$ as the set of transpositions $(0, k_n)$ and $(k_n, j)$
for all $n \ge 1$ and $j$ with $k_n+1 \le j \le k_{n+1}-1$,
so that $\Gamma (S)$ is obtained from a star with centre $0$ 
and infinitely many neighbours $k_n$
by attaching $k_{n+1}-k_n-1$ vertices to each vertex $k_n$;
thus $\Gamma (S)$ is a tree of diameter $4$,
with all vertices but one (the origin) of finite degrees.
\par

On the contrary, the set
$S^0_\N = \{ (0, n) \mid n \ge 1\}$
does not satisfy Condition (\ref{PCinfinity}).
Proposition \ref{propS0} provides the conjugacy growth series
for the pair $(\Sy (\N), S^0_\N)$.

\par
\emph{We ignore the existence of
a simple criterion for graphs or trees to be partition complete.}
\end{exe}

Using Definition \ref{PCall}, 
we reformulate Lemma \ref{lemmafcl}\ref{iiiDElemmafcl}
and generalize Proposition \ref{firstsample} as follows:

\begin{lem}
% 7
\label{reformlemma3}
Let $X$ be a non-empty set and $S$ a partition-complete set of transpositions of $X$.
Let $g = c_1 \cdots c_k \in \Sy (X)$ be a product of disjoint cycles
of non-increasing lengths; 
denote these lengths by $\lambda_1+1, \hdots, \lambda_k+1$,
and set $L = \sum_{i=1}^k \lambda_i$, so that $\vert \su (g) \vert = L+k$.
Then
$$
\kappa_{\Sy (X), S}(g)  \, = \, L .
$$
\end{lem}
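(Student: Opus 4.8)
The plan is to establish the two inequalities $\kappa_{\Sy (X), S}(g) \ge L$ and $\kappa_{\Sy (X), S}(g) \le L$ separately, each as a short deduction from Lemma \ref{lemmafcl}.

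For the lower bound, I would observe that every element $h$ in the conjugacy class of $g$ is again a product of $k$ pairwise disjoint cycles of the same lengths $\lambda_1+1, \hdots, \lambda_k+1$, so that $\vert \su (h) \vert = L+k$ and the partition associated to $h$ as in Observation \ref{partconjclasses} is again $\lambda \vdash L$. By Lemma \ref{lemmafcl}\ref{iiDElemmafcl}, such an $h$ cannot be written as a product of fewer than $L$ transpositions of $X$, hence a fortiori not as a product of fewer than $L$ elements of $S \cup S^{-1} = S$ (transpositions being involutions), so that $\ell_{\Sy (X), S}(h) \ge L$. Taking the infimum over $h$ in the conjugacy class of $g$ gives $\kappa_{\Sy (X), S}(g) \ge L$.

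For the upper bound, I would use partition-completeness to exhibit a cheap conjugate of $g$. Since $\vert \su (g) \vert = L+k \le \vert X \vert$, Condition (\ref{PC}) applied to the partition $\lambda = (\lambda_1, \hdots, \lambda_k) \vdash L$ supplies a forest inside $\Gamma (S)$ consisting of $k$ pairwise vertex-disjoint trees $T_1, \hdots, T_k$ with $\lambda_1+1, \hdots, \lambda_k+1$ vertices respectively; this forest has exactly $\sum_{i=1}^k \lambda_i = L$ edges, which I enumerate as $\{x_1,x'_1\}, \hdots, \{x_L, x'_L\}$. By Lemma \ref{lemmafcl}\ref{iiiDElemmafcl}, the product $h = (x_1, x'_1) \cdots (x_L, x'_L)$ is conjugate to $g$ in $\Sy (X)$. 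Each factor $(x_l, x'_l)$ belongs to $S$ because $\{x_l, x'_l\}$ is an edge of $\Gamma (S)$, hence $\ell_{\Sy (X), S}(h) \le L$ and therefore $\kappa_{\Sy (X), S}(g) \le L$. Combining the two inequalities yields the claimed equality.

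I do not expect a genuine obstacle: the lemma is essentially a repackaging of Lemma \ref{lemmafcl}\ref{iiiDElemmafcl} in the vocabulary of Definition \ref{PCall}. The only points deserving a word of care are the bookkeeping that the forest produced by (\ref{PC}) has precisely $L$ edges (a tree on $m$ vertices has $m-1$ edges, so the tree $T_i$ contributes $\lambda_i$ edges), and the remark that the hypothesis $L+k \le \vert X \vert$ needed to invoke (\ref{PC}) holds automatically since $\vert \su (g) \vert = L+k$.
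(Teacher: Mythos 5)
Your argument is correct and is essentially the paper's own: the upper bound comes from the forest supplied by Condition (\ref{PC}) together with Lemma \ref{lemmafcl}\ref{iiiDElemmafcl}, and the lower bound from Lemma \ref{lemmafcl}\ref{iiDElemmafcl} applied to an arbitrary conjugate (which has the same cycle type). You merely spell out explicitly the two inequalities that the paper leaves implicit when it deduces Lemma \ref{reformlemma3} from Lemma \ref{lemmafcl}.
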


\begin{prop}
% 8
\label{sharpening}
Let $X$ be an infinite set and $S \subset \Sy (X)$ 
a partition-complete set of transpositions.
\begin{enumerate}[noitemsep]
\item[(a)]
The equalities of \ref{iDEfirstsample} 
in Proposition~\ref{firstsample} hold true.
In particular the series $C_{\Sy (X), S}(q)$ 
does not depend on the cardinality of $X$, as long as $X$ is infinite.
\end{enumerate}
For every $n \ge 1$, let $S_n \subset \Sy (n)$ be 
a partition-complete set of transpositions.
\begin{enumerate}[noitemsep]
\item[(b)]
Claims  \ref{iiDEfirstsample}, 
\ref{iiiDEfirstsample}, and \ref{ivDEfirstsample} 
in Proposition \ref{firstsample} hold true.
\end{enumerate}
\end{prop}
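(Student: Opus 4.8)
The plan is to deduce Proposition~\ref{sharpening} from Lemma~\ref{reformlemma3} by exactly the partition bookkeeping that underlies Proposition~\ref{firstsample}: the hypothesis on $S$ and $S_n$ there was used only to guarantee partition-completeness (Example~\ref{examplesPC}), hence the identity $\kappa_{\Sy(X),S}(g)=L$ supplied by Lemma~\ref{reformlemma3}, and once that identity is available for an arbitrary partition-complete set of transpositions the counting is word-for-word the same. So I would first invoke Lemma~\ref{reformlemma3} and then sum $q^{\kappa}$ over conjugacy classes, indexing them by partitions via Observation~\ref{partconjclasses}.

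For part~(a): when $X$ is infinite the constraint $L+k\le|X|$ of Observation~\ref{partconjclasses} is automatic, so $\text{Conj}(\Sy(X))$ is in bijection with the set of all partitions $\lambda=(\lambda_1,\dots,\lambda_k)$, the class attached to $\lambda\vdash L$ being that of a product of $k$ disjoint cycles of lengths $\lambda_1+1,\dots,\lambda_k+1$ (the empty partition giving the identity). By Lemma~\ref{reformlemma3} that class has conjugacy length $L=|\lambda|$, whence $C_{\Sy(X),S}(q)=\sum_{\lambda}q^{|\lambda|}=\sum_{m\ge 0}p(m)q^m$, and the infinite product is the Euler identity recalled in Appendix~\ref{recallp(n)}. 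In particular the series does not see the cardinality of $X$.

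For part~(b): take $X=\{1,\dots,n\}$ and record a conjugacy class of $\Sy(n)$ by its full cycle type, i.e.\ by a partition $\mu\vdash n$ in which the fixed points are the parts equal to $1$. Writing $\mu$ as the union of the parts $\lambda_i+1\ge 2$ (the genuine cycles) and the $n-L-k$ parts equal to $1$ puts us in Observation~\ref{partconjclasses} with $\lambda\vdash L$ in $k$ parts, $L+k\le n$, and $\mu$ having $\ell(\mu)=n-L$ parts; Lemma~\ref{reformlemma3} then gives conjugacy length $L=n-\ell(\mu)$. Grouping the classes by their number of parts yields $C_{\Sy(n),S_n}(q)=\sum_{k=0}^{n-1}p_{n-k}(n)q^k$, which is claim~\ref{iiDEfirstsample} (the values $k=0$ and $k=n-1$ being the identity and an $n$-cycle). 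Claim~\ref{iiiDEfirstsample} is then formal: since $\sum_i(\mu_i-1)=|\mu|-\ell(\mu)$ one has $C_{\Sy(n),S_n}(q)\,t^n=\sum_{\mu\vdash n}\prod_i q^{\mu_i-1}t^{\mu_i}$, and summing over $n\ge 0$, i.e.\ over all partitions $\mu$, the usual regrouping of parts by size $j$ gives $\sum_n C_{\Sy(n),S_n}(q)t^n=\prod_{j\ge 1}(1-q^{j-1}t^j)^{-1}$. For claim~\ref{ivDEfirstsample}, the coefficient of $q^k$ in $C_{\Sy(n),S_n}(q)$ is $p_{n-k}(n)$; subtracting $1$ from each of the $n-k$ parts identifies the partitions of $n$ with $n-k$ parts with the partitions of $k$ having at most $n-k$ parts, a constraint that is vacuous once $n\ge 2k$, so for such $n$ that coefficient equals $p(k)$, the coefficient of $q^k$ in $C_{\Sy(X),S}(q)$ by part~(a). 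Hence the coefficients stabilize, which is the claimed coefficientwise convergence, revisited quantitatively in Proposition~\ref{betterthan1}.

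I expect no serious obstacle at this level, since all the content has been pushed into Lemma~\ref{reformlemma3}: its upper bound $\kappa_{\Sy(X),S}(g)\le L$ requires exhibiting inside $\Gamma(S)$ a forest with the prescribed tree sizes, which is precisely partition-completeness fed into Lemma~\ref{lemmafcl}\ref{iiiDElemmafcl}, while its lower bound $\kappa_{\Sy(X),S}(g)\ge L$ holds because every $h$ in the conjugacy class of $g$ has the same invariant $L$ and $\ell_{\Sy(X),S}(h)\ge\ell_{\Sy(X),T_X}(h)=L$ by Lemma~\ref{lemmafcl}\ref{iiDElemmafcl}. Within the proposition itself the only points needing care are the bijective translation between ``partition $\lambda\vdash L$ with $k$ parts'' (cycles only) and ``partition $\mu\vdash n$'' (cycle type with fixed points counted), and the threshold $n\ge 2k$ used in claim~\ref{ivDEfirstsample}.
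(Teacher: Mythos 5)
Your proposal is correct and follows essentially the same route as the paper: Lemma~\ref{reformlemma3} gives $\kappa = L$, Observation~\ref{partconjclasses} indexes conjugacy classes by partitions (with fixed points counted as parts $1$ in the finite case), and claims \ref{iiiDEfirstsample} and \ref{ivDEfirstsample} follow from the same regrouping of parts and the identity $p_{n-k}(n)=p(k)$ for $2k\le n$ used in the paper's Section~\ref{subsection2.b}. The only cosmetic difference is that in \ref{iiiDEfirstsample} you sum over partitions and reassemble the product, whereas the paper expands the product and recognizes the partitions; the computation is the same.
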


For the next proposition, we consider the generating sets of transpositions
$$
\aligned
S^0_\N \, &= \, \{ (0,n) \in \Sy (\N) \mid n \ge 1 \} \, \subset \, \Sy (\N) ,
\\
S^0_n \, &= \,  \{ (0,i) \mid 1 \le i \le n-1 \} \, \subset \,
\Sy (n) = \Sy (\{0, 1, \hdots, n-1\}) ,
\endaligned
$$
which  \emph{do not} satisfy Condition (\ref{PC}).

\begin{prop}
% 9
\label{propS0}
Let $S^0_\N \subset \Sy (\N)$ and, 
for every $n \ge 1$, let $S^0_n \subset \Sy (n)$
be as above. Then
$$
\aligned
(i) \hskip.2cm
&C_{\Sy (\N), S^0_\N}(q) \, = \, 
1 + \sum_{k=1}^\infty q^{3k-2} \prod_{j=1}^k \frac{1}{1-q^j} \, = \, 
\\
&\hskip.5cm
1 + q + q^2 + q^3 + 2q^4 + 2q^5 + 
3q^6 + 4q^7 + 5q^8 + 6q^9 + 9q^{10} +
\\
&\hskip.5cm
10 q^{11} + 13 q^{12} + 17q^{13} + 21 q^{14} + 25 q^{15} + 
33 q^{16} + 39 q^{17} + 49 q^{18} +
\\
&\hskip.5cm
60 q^{19} + 73 q^{20} + 88 q^{21} + 110 q^{22} + 130 q^{23} + 
158 q^{24} + \cdots \hskip.1cm ,
\\
(ii) \hskip.2cm
&C_{\Sy (n), S^0_n} (q) \, = \, 
1 \, + \, \sum_{k=1}^{\lfloor n/2 \rfloor} q^{2k-2} \sum_{j=k}^n p_k(j)q^{j}
\hskip.1cm .
\endaligned
$$
Moreover, when $n \to \infty$,
the polynomials $C_{\Sy (n), S^0_n}$ of (ii)
converge coefficientwise 
towards the series $C_{\Sy (\N), S^0_\N}(q)$ of (i).
\end{prop}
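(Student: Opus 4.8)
The plan is to determine the conjugacy length $\kappa_{\Sy(X),S^{0}}$ explicitly, where $X$ is $\N$ or $\{0,1,\dots,n-1\}$ and $S^{0}$ is the corresponding ``star'' of transpositions centred at $0$, and then to read off both series by partition bookkeeping. Write an element $g$ as a product $c_{1}\cdots c_{k}$ of $k\ge 1$ disjoint cycles of lengths $\lambda_{1}+1\ge\cdots\ge\lambda_{k}+1\ge 2$, so that $\vert\su(g)\vert=L+k$ with $L=\sum_{i}\lambda_{i}$, as in Observation~\ref{partconjclasses}. The statement underlying both (i) and (ii) is that
$$
\kappa_{\Sy(X),S^{0}}(g)\;=\;L+2(k-1)\qquad(k\ge 1),\qquad\text{and}\qquad\kappa_{\Sy(X),S^{0}}(1)=0 .
$$

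For the upper bound I would exhibit a short representative. Pick $h$ in the conjugacy class of $g$ with one cycle, say $c_{1}$, running through $0$. A direct check shows that an $(\lambda_{1}+1)$-cycle through $0$ is a product of $\lambda_{1}$ transpositions from $S^{0}$, for instance $(0,a_{1},\dots,a_{\lambda_{1}})=(0,a_{\lambda_{1}})\cdots(0,a_{2})(0,a_{1})$. For $i\ge 2$ choose $a\in\su(c_{i})$ (so $a\ne 0$); then $c_{i}'=(0,a)\,c_{i}\,(0,a)$ is a cycle of the same length running through $0$, hence a product of $\lambda_{i}$ transpositions from $S^{0}$, whence $c_{i}=(0,a)\,c_{i}'\,(0,a)$ is a product of $\lambda_{i}+2$ of them. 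Since the $c_{i}$ commute pairwise, concatenating these words writes $h$ as a product of $\lambda_{1}+\sum_{i\ge 2}(\lambda_{i}+2)=L+2(k-1)$ elements of $S^{0}$, all of whose supports lie inside $\su(h)$, a set of $L+k\le\vert X\vert$ points. Hence $\kappa_{\Sy(X),S^{0}}(g)\le L+2(k-1)$.

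The lower bound is the heart of the matter. For a finitely supported $h\in\Sy(X)$ put
$$
\Phi(h)\;=\;\vert\su(h)\vert+c(h)-2\,[\,0\in\su(h)\,],
$$
where $c(h)$ is the number of cycles of $h$ of length $\ge 2$ and $[P]$ equals $1$ if the statement $P$ holds and $0$ otherwise. Then $\Phi(1)=0$, and I claim that left multiplication by any transposition $(0,a)$, $a\ne 0$, changes $\Phi$ by exactly $\pm 1$. This rests on the standard fact that multiplying by a transposition either cuts a cycle into two or merges two cycles into one: the verification splits into the handful of cases according to whether $0$ and $a$ lie in the same, in different, or in no non-trivial cycle of $h$, and --- in the cutting cases --- whether one of the two resulting pieces is a fixed point; in each case the changes in $\vert\su\vert$, in $c$, and in the Iverson term add up to $\pm 1$. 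It follows that any expression of $h$ as a product of $M$ elements of $S^{0}$ obeys $M\ge\Phi(h)$, i.e.\ $\ell_{\Sy(X),S^{0}}(h)\ge\Phi(h)$. Within the class of $g$ the numbers $\vert\su(h)\vert=L+k$ and $c(h)=k$ are invariants, whereas a representative with $0\in\su(h)$ always exists (conjugate by $(0,x)$ for some $x\in\su(g)$ if $0$ is fixed); therefore $\min_{h}\Phi(h)=(L+k)+k-2=L+2(k-1)$ and $\kappa_{\Sy(X),S^{0}}(g)\ge L+2(k-1)$. Together with the upper bound this proves the displayed value, and since $\kappa=m$ forces $3k\le m+2$, hence finitely many relevant classes, it also yields Condition~(\ref{Fin}) for $(\Sy(\N),S^{0}_{\N})$. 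I expect this $\pm 1$ case check to be the only genuinely delicate step.

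Finally I assemble the series. Summing $q^{\kappa}$ over conjugacy classes, using that there are $p_{k}(L)$ partitions of $L$ into exactly $k$ parts and that $\sum_{L}p_{k}(L)q^{L}=q^{k}\prod_{j=1}^{k}(1-q^{j})^{-1}$ (Appendix~\ref{recallpk(n)}), one gets
$$
C_{\Sy(\N),S^{0}_{\N}}(q)=1+\sum_{k\ge 1}q^{2k-2}\sum_{L\ge 0}p_{k}(L)q^{L}=1+\sum_{k\ge 1}q^{3k-2}\prod_{j=1}^{k}\frac{1}{1-q^{j}},
$$
which is (i), the numerical expansion following by expanding the first few summands. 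For $\Sy(n)$ the class with parameters $(L,k)$ occurs exactly when $\vert\su(g)\vert=L+k\le n$, that is $k\le L\le n-k$, which forces $k\le\lfloor n/2\rfloor$; so the same summation gives $C_{\Sy(n),S^{0}_{n}}(q)=1+\sum_{k=1}^{\lfloor n/2\rfloor}q^{2k-2}\sum_{j=k}^{\,n-k}p_{k}(j)q^{\,j}$, which is (ii). For the convergence, a class contributing to the coefficient of $q^{m}$ has $L+2(k-1)=m$, hence $L+k\le m+1$; so once $n\ge m+1$ the restriction $L+k\le n$ is vacuous and the coefficient of $q^{m}$ in $C_{\Sy(n),S^{0}_{n}}$ already equals that in $C_{\Sy(\N),S^{0}_{\N}}$. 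Thus the polynomials converge to the series coefficientwise --- in fact each coefficient is eventually constant.
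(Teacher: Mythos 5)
Your proposal is correct, and its global architecture coincides with the paper's: compute $\kappa_{\Sy (X), S^0}$ on each conjugacy class (the value $L+2(k-1)=\vert \su (g)\vert +k-2$), then sum $q^{\kappa}$ over partitions using (\ref{Euler312}). Your upper bound is essentially the paper's Lemma \ref{longS0smaller}: the conjugation trick $c_i=(0,a)\,c_i'\,(0,a)$ is exactly the factorization $(0,x_1)(0,x_{\mu_i})\cdots(0,x_2)(0,x_1)$ written there. Where you genuinely diverge is the lower bound. The paper (Lemma \ref{longS0equalfinite}) works with a minimal word and counts how often each transposition $(0,x)$, $x\in \su (g)$, must occur, with a case distinction according to whether $0$ lies in the support; you instead introduce the potential $\Phi(h)=\vert \su (h)\vert + c(h) - 2\,[\,0\in \su (h)\,]$ and check that multiplication by a star transposition changes $\Phi$ by exactly $\pm 1$, whence $\ell\ge\Phi$ and $\kappa(g)\ge\min_h\Phi(h)$. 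Your monovariant argument is cleaner and more self-contained (it is the standard computation of the word metric on the star Cayley graph); the delicate subcases are precisely those where $0$ enters or leaves the support, which is what the term $-2[\,0\in\su (h)\,]$ is designed to absorb, and your case list does cover them, though you only sketch the verification. What the paper's route buys in exchange is the exact value of $\ell_{\Sy (n),S^0_n}$ (not only of $\kappa$), which it reuses in Proposition \ref{growthpolSym(n)S0}; your argument as written gives only the conjugacy length, which suffices for Proposition \ref{propS0}.

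One point you should make explicit rather than pass over: your bookkeeping for the finite case yields the inner sum $\sum_{j=k}^{n-k}p_k(j)q^j$, which is not literally the formula in statement (ii), whose upper limit is $n$. Your version is the correct one: the constraint is $\vert \su (g)\vert = j+k\le n$, and it is your formula, not the printed one, that reproduces the paper's own examples, e.g.\ $C_{\Sy (4), S^0_4}(q)=1+q+q^2+q^3+q^4$ (with upper limit $n$ one would get an extra $q^4$, among other spurious terms). The printed limit comes from a slip in the substitution $j=m-k$, $m\le n$, so you have silently corrected a typo; say so instead of asserting that your formula ``is (ii)''. The convergence claim is unaffected, and your remark that the coefficient of $q^m$ stabilizes as soon as $n\ge m+1$ (since $\kappa=m$ forces $\vert \su (g)\vert\le m+1$) is a correct, slightly sharper form of the paper's concluding observation.
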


At the day of writing, the sequence
$$
\begin{array}{c|ccccccccccccccccccc}
n&0&1&2&3&4&5&6&7&8&9&10&11&12&13&14&15&16&17&18\\
\hline
c_n&1&1&1&1&2&2&3&4&5&6&9&10&13&17&21&25&33&39&49\\
\end{array}
$$
of coefficients of the series $C_{\Sy (\N), S^0_\N} (q) := \sum_{n=0}^\infty c_n q^n$
of (i) does not appear in \cite{OEIS}.
The equality of (ii) is repeated in Proposition \ref{growthpolSym(n)S0} below.
\par

Numerically, the series of Proposition \ref{propS0}(i)
converges in the unit disc,
and shows two roots of smallest absolute value,
near $-0.53\pm 0.68i$.
%(j'ai fait le calcul en tronquant a q^{450} 
%et on voit clairement apparaitre cette racine). 
This makes it unlikely that the series of Proposition \ref{propS0}
has such a nice product expansion like that of Proposition \ref{firstsample}\ref{iDEfirstsample}.

\vskip.2cm

Let $X$ be an infinite set and $H$ a finite group.
Let $W = H \wr_X \Sy (X)$ be the corresponding permutational wreath product.
Let $S$ be a generating set of $W$ containing a set of transpositions $S_X$ of $X$
generating $\Sy (X)$ and satisfying Condition (\ref{PCwr}) of Section \ref{sectionwreath}.
Denote by $M$ the number of conjugacy classes of $H$.

\begin{prop}[see Proposition \ref{wreath} below]
% 10
\label{prewreath}
Let $W = H \wr_X \Sy (X)$, $S$ and $M$ be as above. Then
$$
C_{W,S} (q) \, = \, 
\Big( C_{\Sy (X), S_X} (q) \Big) ^N \, = \,
\prod_{k=1}^\infty \frac{1}{ (1-q^k)^M} \hskip.1cm .
$$
\end{prop}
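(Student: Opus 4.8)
The plan is to reduce the statement to a length computation on single ``decorated cycles'', just as Proposition~\ref{firstsample}\ref{iDEfirstsample} (equivalently Proposition~\ref{sharpening}(a)) reduces the pure symmetric-group case to the partition function. I would first recall the classical description of conjugacy classes of the finitary wreath product $W = H \wr_X \Sy(X)$ with $X$ infinite: writing an element as $(f,\sigma)$ with $f\colon X \to H$ finitely supported and $\sigma \in \Sy(X)$, each cycle $O$ of $\sigma$ (fixed points included) carries a \emph{cycle product} in $H$, well defined up to conjugacy; the conjugacy class of $(f,\sigma)$ is exactly the finite multiset of pairs $\big(\text{length of }O,\ \text{class of the cycle product of }O\big)$, the pair $(1,[1])$ being discarded since $X$ is infinite (here $[h]\in\conj(H)$ denotes the class of $h\in H$, and $[1]$ the trivial class). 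Equivalently, a conjugacy class of $W$ is a finitely supported family of multiplicities $a_{m,c}\in\N$ indexed by pairs $(m,c)$ with $m\ge 1$ and $c\in\conj(H)$, subject only to $a_{1,[1]}=0$.

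Next I would establish the key length formula: with the above notation,
$$
\kappa_{W,S}\big((f,\sigma)\big) \, = \, \sum_{m\ge 2}\,\sum_{c} a_{m,c}\,(m-1) \, + \, \sum_{m\ge 1}\,\sum_{c\ne[1]} a_{m,c} ,
$$
that is, a cycle of length $m$ costs $m-1$ when its cycle product is trivial and $m$ otherwise. For the upper bound I would invoke Condition~(\ref{PCwr}): it makes $S_X$ partition-complete, so that by Lemma~\ref{lemmafcl}\ref{iiiDElemmafcl} all the required disjoint cycles of $\sigma$ of length $\ge 2$ can be realized simultaneously, each sitting on one tree, by the $\sum_{m\ge2}\sum_c a_{m,c}(m-1)$ edge-transpositions of a suitable forest in $\Gamma(S_X)$; Condition~(\ref{PCwr}) also guarantees that a single element of $H$ placed at a single point of $X$ has conjugacy length $1$ in $W$. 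Concentrating, on each nontrivially decorated cycle, all of its $H$-content at one vertex of its tree (at a fresh point for a decorated $1$-cycle) then yields an element conjugate to $(f,\sigma)$ and written with exactly $\sum_{m\ge2}\sum_c a_{m,c}(m-1) + \sum_{m\ge1}\sum_{c\ne[1]}a_{m,c}$ letters of $S\cup S^{-1}$.

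For the lower bound, take a geodesic word $w' = s_1\cdots s_n$ with $w'$ conjugate to $(f,\sigma)$ and $n=\kappa_{W,S}\big((f,\sigma)\big)$, and split its letters into the $p$ lying in $S_X\cup S_X^{-1}$ and the $q=n-p$ remaining ``$H$-letters'', which lie in the base group $H^{(X)}$. Under the projection $W\to\Sy(X)$ the $H$-letters die, so the image of $w'$ is a product of $p$ transpositions conjugate to $\sigma$; by Lemma~\ref{reformlemma3} this forces $p \ge \kappa_{\Sy(X),S_X}(\sigma) = \sum_{m\ge2}\sum_c a_{m,c}(m-1)$. For the other half, Condition~(\ref{PCwr}) ensures that every $H$-letter is supported at a single point of $X$, so a one-line induction on the length of the word gives $\vert\su(f')\vert\le q$ whenever $w'=(f',\sigma')$; on the other hand $\su(f')$ meets every cycle of $\sigma'$ with nontrivial cycle product, and there are $\sum_{m\ge1}\sum_{c\ne[1]}a_{m,c}$ such (pairwise disjoint) cycles, whence $q \ge \sum_{m\ge1}\sum_{c\ne[1]}a_{m,c}$. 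Adding the two inequalities yields the lower bound.

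Finally I would assemble the series: summing $q^{\kappa_{W,S}}$ over conjugacy classes and using the length formula, $C_{W,S}(q)$ becomes the generating series of $M$-coloured partitions, the $M$ colours being the classes of $H$ --- a trivially decorated cycle of length $m\ge2$ being a part of size $m-1$ in the colour $[1]$, and a cycle of length $m\ge1$ decorated by $c\ne[1]$ a part of size $m$ in the colour $c$ --- so that $C_{W,S}(q) = \prod_{k\ge1}(1-q^k)^{-M}$, which by Proposition~\ref{sharpening}(a) is the $M$-th power of $C_{\Sy(X),S_X}(q)$. The step I expect to be the main obstacle is the lower bound, specifically the inequality $q\ge\sum_{m\ge1}\sum_{c\ne[1]}a_{m,c}$: it is what forces the clean exponent $M$ rather than something depending on the word metric of $H$, and it is precisely here that the exact shape of Condition~(\ref{PCwr}) --- all generators outside $S_X$ being supported at single points --- gets used.
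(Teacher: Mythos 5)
Your proof is correct and follows essentially the same route as the paper's: classify the conjugacy classes of $W$ by $H_*$-decorated partitions, show that a cycle of length $m$ costs $m-1$ when its cycle product is trivial and $m$ otherwise, and sum over types to get the $M$-fold product $\prod_{k\ge 1}(1-q^k)^{-M}$. The only difference is one of detail: the paper simply asserts the per-cycle contribution after exhibiting the short representative, whereas you spell out the matching lower bound (counting $S_X$-letters via the projection to $\Sy(X)$ and $S_H$-letters via $\vert\su(f')\vert\le q$ against the pairwise disjoint nontrivially decorated cycles), which fills in a step the paper leaves implicit rather than taking a different method.
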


The \textbf{finitary alternating group} of $\N$ is the subgroup $\Alt (\N)$ of $\Sy (\N)$
of permutations of even signature. Consider its generating set
$$
S^A_\N \, = \, \{ (i, i+1, i+2) \in \Alt (\N) \mid i \in \N \} ,
$$
as well as the subset $T^A_\N := \bigcup_{g \in \Alt (\N)} g S^A_\N g^{-1}$
of all $3$-cycles.
Proposition \ref{AltIntro} is the analogue for the finitary alternating group of $\N$
of Proposition \ref{firstsample}\ref{iDEfirstsample}
for the finitary symmetric group.

\begin{prop}   
% 11
\label{AltIntro}
Let $S \subset \Alt (\N)$ be a generating set 
such that $S^A_\N \subset S \subset T^A_N$.
Then
$$
\aligned
C_{\Alt (X), S} (q) \, &= \, 
\sum_{u=0}^\infty p(u) q^u \sum_{v=0}^\infty p_e(v) q^v
\\
&= \, \frac{1}{2} \prod_{j=1}^\infty \frac{1}{(1-q^j)^2} \, + \,
\frac{1}{2} \prod_{j=1}^\infty \frac{1}{1-q^{2j}}
\\
&= \, 1+q+3q^2+5q^3+11q^4+18q^5+34q^6
\\
& \hskip.5cm +55q^7+95q^8+150q^9+244q^{10} + \cdots \hskip.1cm .
\endaligned
$$
where $p_e(v)$ denotes the number of partitions of $v \in \N$
involving an even number of positive parts, as in Appendix \ref{recallpepo(n)}.
\end{prop}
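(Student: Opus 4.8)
The plan is to follow the proof of Proposition~\ref{firstsample}\ref{iDEfirstsample}, with Observation~\ref{partconjclasses} and Lemma~\ref{lemmafcl} replaced by counterparts adapted to $3$-cycles, and to finish with a generating-function computation. The first ingredient is the description of the conjugacy classes of $\Alt (\N)$. Since $\N$ is infinite, every $g \in \Alt (\N)$ has infinitely many fixed points, hence commutes with the (odd) transposition of two of them; so the $\Sy (\N)$-conjugacy class of an even permutation is a \emph{single} $\Alt (\N)$-conjugacy class, and none of the splittings occurring in the finite groups $\Alt (n)$ takes place here. Together with Observation~\ref{partconjclasses}, this gives a bijection between conjugacy classes of $\Alt (\N)$ and partitions $\lambda = (\lambda_1, \ldots, \lambda_k) \vdash L$ with $L$ \emph{even}, the class of $\lambda$ being that of products of $k$ disjoint cycles of lengths $\lambda_1 + 1, \ldots, \lambda_k + 1$; evenness of $L$ is precisely the condition that such a product be an even permutation. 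Note also that all $3$-cycles are conjugate in $\Alt (\N)$, so $T^A_\N$ is a single conjugacy class, and the hypothesis $S^A_\N \subset S \subset T^A_\N$ merely says that $S$ generates $\Alt (\N)$ and consists of $3$-cycles.

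The core step is to prove that, for such an $S$ and for $g$ of cycle type $\lambda$ with cycle lengths $\ell_i = \lambda_i + 1$,
\[
\kappa_{\Alt (\N), S}(g) \, = \, \sum_{i=1}^{k} \lfloor \ell_i / 2 \rfloor \, = \, \sum_{i=1}^{k} \lceil \lambda_i / 2 \rceil .
\]
Since enlarging a generating set does not increase conjugacy length, it suffices to prove ``$\le$'' for $S = S^A_\N$ and ``$\ge$'' for $S = T^A_\N$. For ``$\le$'', I would carry the cycles of $g$ onto pairwise disjoint blocks of consecutive integers of $\N$ and replace $g$ by the resulting conjugate --- this is legitimate because two permutations of $\N$ with the same cycle type are conjugate by an element of $\Alt (\N)$, there being spare fixed points to correct the signature. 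An odd-length cycle carried by a block $\{a, a+1, \ldots, a+2p\}$ equals
\[
(a, a+1, a+2)\,(a+2, a+3, a+4)\,\cdots\,(a+2p-2, a+2p-1, a+2p) ,
\]
a product of $p$ elements of $S^A_\N$; and, $g$ being even, its even-length cycles occur in even number and can be paired off, a pair of lengths $2p$ and $2q$ carried by two adjacent blocks being expressible with $p + q$ elements of $S^A_\N$: write each of the two cycles as a product of $p - 1$, resp.\ $q - 1$, consecutive $3$-cycles times one transposition located at the end of its block nearest the junction, then splice the two leftover transpositions by the identity $(x, x+1)(x+2, x+3) = (x, x+1, x+2)(x+1, x+2, x+3)$. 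Summing over the cycles, a conjugate of $g$ becomes a product of $\sum_i \lfloor \ell_i / 2 \rfloor$ elements of $S^A_\N$.

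For ``$\ge$'', I would set $d(h) = \vert \su (h) \vert - o(h)$ for $h \in \Sy (\N)$, where $o(h)$ is the number of cycles of $h$ of odd length at least $3$; this is a class function, vanishes on the identity, and equals $2 \sum_i \lfloor \ell_i / 2 \rfloor$ for an element with cycle lengths $\ell_i$. The key point is that $d(h\tau) \le d(h) + 2$ for every $3$-cycle $\tau$: writing $\tau = (a,b)(b,c)$ as a product of two transpositions, each of the two successive multiplications changes $d$ by $0$ or $\pm 2$, the value $+2$ occurring exactly when two cycles of odd length merge into one cycle of even length; and if the first multiplication does merge two odd cycles, then in the intermediate permutation the cycle through $b$ has even length, so the second multiplication cannot merge two odd cycles either, whence the two increments are never both $+2$. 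Since every element of $S \cup S^{-1} \subset T^A_\N$ is a $3$-cycle, induction from the identity gives $d(h) \le 2\,\ell_{\Alt (\N), S}(h)$ for all $h$, hence $\kappa_{\Alt (\N), S}(g) \ge \tfrac12 d(g) = \sum_i \lfloor \ell_i / 2 \rfloor$. Consequently
\[
C_{\Alt (\N), S}(q) \, = \, \sum_{\lambda :\ \vert \lambda \vert \text{ even}} q^{\sum_i \lceil \lambda_i / 2 \rceil} .
\]
Encoding $\lambda$ by the multiplicities $m_r$ of its parts $r \ge 1$, the exponent becomes $\sum_{s \ge 1} s\,(m_{2s-1} + m_{2s})$ and the constraint ``$\vert \lambda \vert$ even'' reads ``$\sum_s m_{2s-1}$ even''. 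Without the constraint the sum is $\prod_{s \ge 1} (1 - q^s)^{-2}$; inserting the sign $(-1)^{\sum_s m_{2s-1}}$ turns it into $\prod_{s \ge 1}(1 + q^s)^{-1}(1 - q^s)^{-1} = \prod_{s \ge 1}(1 - q^{2s})^{-1}$; averaging the two yields $\tfrac12 \prod_j (1 - q^j)^{-2} + \tfrac12 \prod_j (1 - q^{2j})^{-1}$. The first form stated then follows from $\sum_v p_e(v) q^v = \tfrac12\bigl(\prod_s (1 - q^s)^{-1} + \prod_s (1 + q^s)^{-1}\bigr)$, from $\prod_s (1 + q^s)^{-1} = \prod_s (1 - q^s)\big/\prod_s (1 - q^{2s})$, and from $\sum_u p(u) q^u = \prod_s (1 - q^s)^{-1}$; the numerical expansion is then a routine computation.

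I expect the real work to lie in the conjugacy-length formula, and within it in the lower bound: the statement that $d$ increases by at most $2$ under multiplication by a $3$-cycle requires a careful look at how a $3$-cycle, viewed as a product of two transpositions, merges or splits cycles and changes the parities of their lengths, and at why two successive ``costly'' steps cannot both occur. The upper bound is more mechanical, the one genuinely new identity being the splicing relation $(x, x+1)(x+2, x+3) = (x, x+1, x+2)(x+1, x+2, x+3)$, which welds two even cycles together inside the generating set $S^A_\N$.
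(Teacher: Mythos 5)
Your proposal is correct, and it reaches the stated identity along a partly different route from the paper. The upper bound $\kappa \le \sum_i \lfloor \ell_i/2 \rfloor$ is essentially the paper's: in its Lemma \ref{conjlengthalt} the paper also writes each odd cycle as a chain of consecutive $3$-cycles and handles even cycles in pairs (it interleaves the two even cycles directly, whereas you weld the two leftover transpositions via $(x,x+1)(x+2,x+3) = (x,x+1,x+2)(x+1,x+2,x+3)$ --- same count, same use of a conjugate supported on consecutive integers so that the generators lie in $S^A_\N$). The real divergence is the lower bound: the paper argues by a minimal counterexample, first proving (Lemma \ref{susinsupg}) that in a geodesic $T^A$-word no two letters share their support, then a support-overlap claim, and finally a multigraph degree/edge count giving $L \ge \frac{1}{2}\vert \su (g) \vert$; you instead introduce the class function $d(h) = \vert \su (h) \vert - o(h)$ and show it increases by at most $2$ under right multiplication by any $3$-cycle, because writing the $3$-cycle as two transpositions each step changes $d$ by $0$ or $\pm 2$, a $+2$ step occurs only when two odd cycles merge into an even one, and after such a merge the cycle through the shared point $b$ is even, so the second step cannot also be $+2$. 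This potential-function argument is shorter and more robust than the paper's extremal argument, and it gives the bound uniformly for every generating set of $3$-cycles. Finally, for the generating function the paper splits each class as $g = g_o g_e$ and parametrizes classes by pairs of partitions, obtaining the product $\big(\sum_u p(u)q^u\big)\big(\sum_v p_e(v)q^v\big)$ directly and then invoking Proposition \ref{genserpepo}; you sum $q^{\sum_i \lceil \lambda_i/2 \rceil}$ over single partitions of even weight and extract the parity constraint by averaging a signed and an unsigned product, then reconcile with the $p \cdot p_e$ form via the same Appendix \ref{recallpepo(n)} identities --- an equivalent, equally rigorous bookkeeping.
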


\begin{obs}
% 12
\label{CongIntro}
For the series of Proposition \ref{AltIntro},
set 
$$
C_{\Alt (\N), S} (q) \, = \,  \sum_{n=0}^\infty p^A(n) q^n .
$$
The coefficients $p^A(n)$ satisfy the following congruence relations:
\begin{eqnarray*}
p^A(5n+3)&\equiv& 0\pmod 5,\\
p^A(10n+7)&\equiv& 0\pmod 5,\\
p^A(10n+9)&\equiv& 0\pmod 5, \\
p^A(25n+23)&\equiv& 0\pmod {25}.
\end{eqnarray*}
Moreover, \emph{conjecturally}:
\begin{eqnarray*}
p^A(49n+17)&\equiv& 0\pmod 7,\\
p^A(49n+31)&\equiv& 0\pmod 7,\\
p^A(49n+38)&\equiv& 0\pmod 7,\\
p^A(49n+45)&\equiv& 0\pmod 7,\\
p^A(121n+111)&\equiv &0\pmod {11}.
\end{eqnarray*}
See Proposition \ref{p^Asure} for the first four relations.
The conjectured relations have been verified numerically
for $p^A(m)$ when $m \le 5000$,
as discussed in Section \ref{sectionRama} and Appendix \ref{AppendixC}.
\end{obs}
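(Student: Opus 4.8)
The plan is to reduce the nine asserted congruences for $p^A(n)$ to congruences for two classical functions, and then to prove the four unconditional ones by elementary $q$-series manipulations in Ramanujan's style. Write $\varphi(q)=\prod_{j=1}^\infty(1-q^j)$, and let $b(n)=[q^n]\,\varphi(q)^{-2}$ be the number of pairs of partitions of total weight $n$ (equivalently, of $2$-coloured partitions of $n$). By Proposition~\ref{AltIntro} one has $2\,C_{\Alt(\N),S}(q)=\varphi(q)^{-2}+\varphi(q^2)^{-1}$, and since $\varphi(q^2)^{-1}=\sum_{m\ge0}p(m)q^{2m}$ this reads
$$2\,p^A(n)\;=\;b(n)\;+\;p(n/2),$$
with the convention $p(n/2)=0$ for odd $n$. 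As $2$ is invertible modulo $5$ and modulo $25$, each congruence $p^A(\,\cdot\,)\equiv0$ is equivalent to the same congruence for $b(n)+p(n/2)$. I would first dispose of the summand $p(n/2)$ using the classical Ramanujan congruences: the residues $10n+7$ and $10n+9$ are odd, so there it is absent; when $5n+3$ is even it equals $10m+8$ and $p(5m+4)\equiv0\pmod5$; when $25n+23$ is even it equals $50m+48$ and $p(25m+24)\equiv0\pmod{25}$. Hence it remains to prove $b(5n+3)\equiv b(10n+7)\equiv b(10n+9)\equiv0\pmod5$ and $b(25n+23)\equiv0\pmod{25}$.

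Modulo $5$ this is short. From $(1-q^j)^5\equiv1-q^{5j}\pmod5$ one gets $\varphi(q)^5\equiv\varphi(q^5)\pmod5$, whence
$$\varphi(q)^{-2}\;\equiv\;\frac{\varphi(q)^3}{\varphi(q^5)}\;=\;\Bigl(\sum_{k\ge0}(-1)^k(2k+1)q^{T_k}\Bigr)\Bigl(\sum_{m\ge0}p(m)q^{5m}\Bigr)\pmod5 ,$$
using Jacobi's identity for $\varphi(q)^3$, with $T_k=k(k+1)/2$. The triangular numbers satisfy $T_{k+5}\equiv T_k\pmod5$ and take only the values $0,1,3$; moreover $T_k\equiv3\pmod5$ precisely when $k\equiv2\pmod5$, in which case $2k+1\equiv0\pmod5$. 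Reading off the coefficient of $q^N$: for $N\equiv2$ or $4\pmod5$ the sum is empty, while for $N\equiv3\pmod5$ every surviving term carries the factor $2k+1\equiv0\pmod5$; in all three cases $b(N)\equiv0\pmod5$, which handles $N=5n+3$ (hence also $10n+3$ and $10n+8$), $N=10n+7$ and $N=10n+9$.

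For the modulus $25$ the same computation must be lifted. The product identity for $\varphi(q)^3/\varphi(q^5)$ above is exact, and restricting to an exponent $N=25n+23$ forces $k=5j+2$; the two identities $T_{5j+2}=25\,T_j+3$ and $(-1)^{5j+2}\bigl(2(5j+2)+1\bigr)=5(-1)^j(2j+1)$ then give
$$[q^{25n+23}]\,\frac{\varphi(q)^3}{\varphi(q^5)}\;=\;5\sum_{j\ge0}(-1)^j(2j+1)\,p\bigl(5(n-T_j)+4\bigr)\;\equiv\;0\pmod{25}$$
by Ramanujan's mod-$5$ congruence applied to each term. It then remains to control the error $\varphi(q)^{-2}-\varphi(q)^3\varphi(q^5)^{-1}$; writing $\varphi(q)^5-\varphi(q^5)=5D(q)$ with $D\in\Z[[q]]$, this error equals $-5\,D(q)\,\varphi(q)^{-2}\varphi(q^5)^{-1}$, so the one missing ingredient is that the coefficient of $q^{25n+23}$ in $D(q)\,\varphi(q)^{-2}\varphi(q^5)^{-1}$ vanishes modulo $5$. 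This is where I expect the real work: it is exactly the bookkeeping underlying the classical proof of $p(25n+24)\equiv0\pmod{25}$, and I would carry it out either via the explicit $5$-dissection of $\varphi(q)^{-1}$ (whose $q^{5m+4}$-block is the manifestly $5$-divisible $5q^4\varphi(q^{25})^5\varphi(q^5)^{-6}$, so that in $\varphi(q)^{-2}=(\varphi(q)^{-1})^2$ the diagonal cross-term is divisible by $25$), or by expanding $\varphi(q)^5\bmod25$ through the multinomial theorem applied to the pentagonal-number series and tracking exponents modulo $5$.

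For the five conjectural congruences the reduction is identical: $p(7n+5)\equiv0\pmod7$ and $p(11n+6)\equiv0\pmod{11}$ remove the $p(n/2)$-summand on each of the progressions $49n+17,\,49n+31,\,49n+38,\,49n+45$ and $121n+111$, so that they become the ``generalized Ramanujan congruences'' $b(n)\equiv0\pmod7$ for $n\equiv17,31,38,45\pmod{49}$ (each $\equiv3\pmod7$) and $b(n)\equiv0\pmod{11}$ for $n\equiv111\pmod{121}$. For these I would only record the numerical verification of Appendix~\ref{AppendixC}, since no proof in the style above is available for them.
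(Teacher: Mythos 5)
Your reduction is exactly the one the paper uses in Proposition \ref{p^Asure}: writing $2p^A(n)=p(n)_{(2)}+p(n)_{(0,1)}$ (your $b(n)+p(n/2)$), killing the even-index summand with the classical congruences $p(5n+4)\equiv 0\pmod 5$, $p(25n+24)\equiv 0\pmod{25}$, $p(7n+5)\equiv 0\pmod 7$, $p(11n+6)\equiv 0\pmod{11}$, and reducing the conjectural cases to the (numerically verified) bipartition congruences modulo $7$ and $11$; your parity bookkeeping ($49n+17=2(49m+33)$, $121n+111=2(121m+116)$, etc.) is precisely the right-hand column of the display in Section \ref{sectionRama}. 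Where you diverge is that the paper simply cites the literature for the bipartition congruences — \cite{Gand--63} and \cite{Andr--08} for $p(5n+B)_{(2)}\equiv 0\pmod 5$ with $B\in\{2,3,4\}$, and \cite{CDHS--14} for $p(25n+23)_{(2)}\equiv 0\pmod{25}$ — whereas you attempt direct proofs. Your modulo-$5$ argument (the Frobenius congruence $\varphi(q)^5\equiv\varphi(q^5)$, Jacobi's series for $\varphi(q)^3$, and the residues of the triangular numbers) is complete and correct, and it is a pleasant self-contained replacement for the first two citations.

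The gap is the modulus $25$. Your main-term computation is fine: restricting $\varphi(q)^3/\varphi(q^5)$ to exponents $25n+23$ forces $k=5j+2$ and yields the factor $5(2j+1)\,p\bigl(5(n-T_j)+4\bigr)\equiv 0\pmod{25}$. But the error term $-5\,D(q)\,\varphi(q)^{-2}\varphi(q^5)^{-1}$ is exactly where the content of $p(25n+23)_{(2)}\equiv 0\pmod{25}$ lives, and you explicitly leave its analysis as ``the one missing ingredient''. Moreover the dissection remark you offer as a route does not suffice as stated: writing $\varphi(q)^{-1}=A_0+A_1+\dots+A_4$ according to the residue of the exponent modulo $5$, only the product $A_4A_4$ is visibly divisible by $25$; the cross-terms $A_0A_3$ and $A_1A_2$ also produce exponents $\equiv 3\pmod 5$, and controlling their coefficients on the progression $25n+23$ is the actual work — comparable in weight to the classical proof of $p(25n+24)\equiv 0\pmod{25}$, which is not a one-liner. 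So, as written, the fourth unconditional congruence is not proved. The quickest repair is to do what the paper does and invoke Formula (1.17) of \cite{CDHS--14}; the more ambitious repair is to carry the $5$-dissection (or multinomial) computation through in detail rather than sketching it.
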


\begin{rem}
% 13
\label{CompareSandT}
(i) 
Let $G$ be a group generated by a subset $T$.
Then 
\begin{equation}
\label{kappa=ell}
\tag{$\kappa_T=\ell_T$}
\kappa_{G,T}(g) \, = \, \ell_{G,T}(g)
\hskip.2cm \text{for all} \hskip.2cm g \in G 
\end{equation}
if and only if $T$ is closed by conjugation,
as it is straightforward to check.

\vskip.2cm

(ii)
Suppose that $G$ is also generated by a subset $S$, 
and assume that
$T = \bigcup_{h \in G} hSh^{-1}$. 
Then 
\begin{equation}
\label{kappakappa1}
\tag{$\kappa_T \le \kappa_S$}
\kappa_{G,T}(g) \, \le \, \kappa_{G,S}(g)
\hskip.2cm \text{for all} \hskip.2cm g \in G ,
\end{equation}
but equality need not hold.
\par

For example, if  $G = \Sy (4)$ and $S = \{(1,2), (2,3,4) \}$, then 
$$
\kappa_{G, T}((1,2)(3,4)) = 2 \, < \,  \kappa_{G,S}((1,2)(3,4)) = 4 .
$$

\vskip.2cm

(iii)
It is remarkable that we have
\begin{equation}
\label{kappakappa2}
\tag{$\kappa_T = \kappa_S$}
\kappa_{G,T}(g) \, = \, \kappa_{G,S}(g)
\hskip.2cm \text{for all} \hskip.2cm g \in G ,
\end{equation}
in many cases of interest here, including
\begin{itemize}
\item[--]
$G = \Sy (\N)$ and $S$ as in Proposition \ref{firstsample}\ref{iDEfirstsample},
so that $T = T_\N$,
\item[--]
$G = \Sy (n)$ and $S = S_n$ 
as in Proposition \ref{firstsample}\ref{iiDEfirstsample}, 
so that $T = T_n$,
\item[--]
$G = \Alt (\N)$ and $S$ as in Proposition \ref{AltIntro},
so that $T = T^A_\N$.
\end{itemize}
In these cases, it follows that 
\begin{equation}
\label{CC2}
\tag{$C_T = C_S$}
C_{G, T}(q) \, = \,  C_{G, S}(q) .
\end{equation}
\par
Note however that, in the case of
$G = \Sy (\N)$ and $S = S^0_\N$, 
and therefore $T = T_\N$, 
the series $C_{G, S}(q)$ of Proposition \ref{propS0}
and $C_{G, T}(q)$ of Proposition \ref{firstsample}\ref{iDEfirstsample} 
are different,
so that the equalities (\ref{kappakappa2}) and (\ref{CC2}) do not hold.
\end{rem}

\subsection*{Overview}
Section \ref{sectionproofs} contains proofs of 
Propositions \ref{firstsample},
% 1
\ref{sharpening},
% 8
\ref{propS0}
% 9
and Lemmas \ref{lemmafcl},
% 3
\ref{reformlemma3}.
% 7
In Section \ref{sectionwreath}, we write and prove
formulas for conjugacy growth series of wreath products,
see Propositions  \ref{prewreath} and \ref{wreath}.

Suppose that $G$ is a finite symmetric group $\Sy (n)$,
and $S$ a system of generators.
When $S$ is either $S^{\text{Cox}}_n$ or $T_n$,
the polynomial $L_{G, S}(q)$ is well-known,
and is recalled in Proposition \ref{rodrigues} below.
Indeed, these polynomials make sense and are explicitely known
for all finite Coxeter systems;
they appear in many places,
for example \cite{Solo--66} and \cite[exercises of $\S$ IV.1]{Bour--68},
as well as  \cite{ShTo--54}.
In Section  \ref{sectionLCsym}, we compute $C_{\Sy (n),S}(q)$,
and compare these polynomials
with those for another generating set, the set $S^0_n$ defined above;
this uses lemmas of Section~\ref{sectionproofs},
as well as some facts on derangements
recalled in Appendix \ref{AppendixB}.
\par

In  Section \ref{sectionalt}, we present results 
of analogous computations for finitary alternating groups,
and in particular the proof of Proposition \ref{AltIntro}.
In the final Section \ref{sectionRama},
we discuss the context of Observation \ref{CongIntro}.
\par

There is a short Appendix \ref{AppendixA} with three lemmas
on symmetric and alternating groups,
and a longer Appendix \ref{AppendixB} 
that is a reminder of various definitions and identities
involving partitions and derangements.
Finally, in Appendix \ref{AppendixC}, we define a generalization of Ramanujan congruences
and we record a large number of these
for the coefficients $p(n)_{(e_1, e_2, e_3, \hdots)}$ of the power series
$$
\sum_{n=0}^\infty p(n)_{(e_1, e_2, e_3, \hdots)} q^n \, = \
\prod_{n=1}^\infty \frac{1}{ (1-q^n)^{e_1} (1-q^{2n})^{e_2} (1 - q^{3n})^{e_3} \, \cdots } \hskip.1cm ,
$$
where $(e_1, e_2, e_3, \hdots)$ is a finite sequence of non-negative integers.
Some of these congruences are established in the literature,
but most are (as far as we know) conjectural only, based on our numerical evidence.

\section{\textbf{Proof of Lemma \ref{lemmafcl} and \ref{reformlemma3}, 
and Propositions \ref{firstsample},
\ref{sharpening}, and  \ref{propS0} }}
% section2
\label{sectionproofs}

We will moreover state and prove a sharpening of Proposition
\ref{firstsample}\ref{ivDEfirstsample}, in Proposition \ref{betterthan1}.

\subsection{Proof of Lemmas \ref{lemmafcl} and \ref{reformlemma3}}
% subsection2.a
\label{subsection2.a}

As a preliminary step for the proof,
consider a cycle 
$$
c  \, = \, (x_1, \hdots, x_{\mu+1}) \in \Sy (X) ,
$$
where $1 \le \mu \le \vert X \vert - 1$.
By Lemma \ref{twocycles} applied $\mu - 1$ times (see Appendix \ref{AppendixA}),
the cycle $c$ can be written
as a product of $\mu$ transpositions with supports in $\su (c)$.

\vskip.2cm

Let $g \in \Sy (X)$ and $\lambda = (\lambda_1, \hdots, \lambda_k) \vdash L+k$ 
be as in Lemma \ref{lemmafcl}.
Write $g = c_1 \cdots c_k$, where $c_1, \hdots, c_k$ are disjoint cycles
of lengths $\lambda_1 + 1, \hdots, \lambda_k + 1$ respectively.
For $i \in \{1, \hdots, k\}$, it follows from the preliminary step
that $c_i$ can be written as a product of $\lambda_i$
transpositions with supports in $\su (g)$.
Hence $g$ can be written 
as a product of $L = \sum_{i=1}^k \lambda_i$ transpositions
with supports in $\su (g)$.
This proves \ref{iDElemmafcl} of Lemma \ref{lemmafcl}.

\vskip.2cm

With the extra ingredient of Lemma \ref{arbrecycle},
this also proves \ref{iiiDElemmafcl} of Lemma \ref{lemmafcl}
and Lemma \ref{reformlemma3}.

\vskip.2cm

Consider now $g = t_1 \cdots t_M$ as in 
\ref{iiDElemmafcl} of Lemma \ref{lemmafcl}.
For $i = 1, \hdots, k$, write
$c_i = (x^i_1, x^i_2, \hdots, x^i_{\lambda_i + 1})$.
Define a multigraph $G = G(t_1, \hdots, t_M)$ as follows:
its vertex set is $V_G := \bigcup_{\nu = 1}^M \su (t_\nu)$,
and there is one edge between the two vertices of $\su (t_\nu)$
for each $\nu \in \{1, \hdots, M\}$.
Observe that $V_G \supset \su (g) = \bigcup_{i=1}^k \su (c_i)$.
\par

Erasing from the product $t_1 \cdots t_M$
those $t_\nu$ contributing to connected components of $G$
disjoint from $\su (g)$ does not change this product.
We can therefore assume that each connected component of $G$
intersects $\su (g)$.
For each $i \in \{1, \hdots, k\}$ and $j \in \{1, \hdots, \lambda_i + 1\}$,
the connected component of $G$ containing $x^i_j$ contains $\su (c_i)$;
it follows that each connected component of $G$ 
contains at least one of the $\su (c_i)$~'s, 
and therefore that the number of connected components of $G$, 
say $\gamma_G$, is at most $k$.
\par

Given any finite multigraph with 
$v$ vertices, $e$ edges, and $\gamma$ connected components,
$e \ge v-\gamma$, with equality if and only if the multigraph is a forest.
For the multigraph $G$, we have therefore
$$
M \, \ge \, \vert V_G \vert - \gamma_G \, \ge \, \vert \su (g) \vert - k
\, = \, \sum_{i=1}^k \lambda_i \hskip.1cm .
$$
Moreover, $M$ and $L$ have the same parity,
which is also the signature of $g$.
\par

Conversely, for every $M \ge L$ with $M-L$ even,
$g$ can be written as a product of $M$ transpositions,
for example the $L$ transpositions of \ref{iDElemmafcl}
and $(M-L)/2$ times the product $s_1s_1$.
This proves \ref{iiDElemmafcl} of Lemma \ref{lemmafcl}.
\hfill $\square$

\subsection{Proof of  Propositions \ref{firstsample} and \ref{sharpening}}
% subsection2.b
\label{subsection2.b}
We prove the equalities of Proposition \ref{firstsample}
in the more general case of  Proposition~\ref{sharpening}.

\vskip.2cm 

\ref{iDEfirstsample}
Let $X$ be an infinite set and $S \subset \Sy (X)$ 
a partition-complete set of transpositions.
The series $C_{\Sy (X), S}(q)$ is a sum over partitions $\lambda \vdash L$
as in (\ref{tag2.a}) of  Observation \ref{partconjclasses}, 
and the contribution of such a partition is $q^L$
by Lemma \ref{reformlemma3}. Hence
$C_{\Sy (X), S}(q) = \sum_{L=0}^\infty p(L) q^L$.
Equality with $\prod_{j=1}^\infty \frac{1}{1-q^j}$
is Euler's identity (\ref{EulerProd})  recalled in Appendix \ref{recallp(n)}.

\vskip.2cm

\ref{iiDEfirstsample}
Consider a positive integer $n$ and a partition-complete set $S_n \subset \Sy (n)$.
Conjugacy classes in $\Sy (n)$ are now in bijection
with partitions of $n$ as follows:
a partition $(\mu_1, \hdots, \mu_k) \vdash n$
with exactly $k$ positive parts
corresponds to a permutation 
$g = c_1 \cdots c_k$ where $c_j$ is a cycle of length $\mu_j$,
and ``cycles'' of length $1$, i.e.\ fixed points of $g$, are now allowed
(this is why we use $\mu$ here rather than $\lambda$ as above).
By Lemma \ref{reformlemma3},
the $S_n$-conjugacy length of such a $g$ is
$\kappa_{\Sy (n), S_n}(g) = \sum_{j=1}^k (\mu_j - 1) = n-k$.
Hence the polynomial $C_{\Sy (n), S_n}(q)$
is a sum over partitions of $n$ (where $n$ is fixed) 
with exactly $k$ parts 
(where $k$ ranges from $1$ (long cycles) to $n$ (identity)),
and each such partition contributes by $q^{n-k}$.
Hence $C_{\Sy (n), S_n}(q) = \sum_{k=1}^n p_k(n)q^{n-k}
= \sum_{k=0}^{n-1} p_{n-k}(n)q^k$.

\vskip.2cm

\ref{iiiDEfirstsample}
Exchanging product and sum, we have
$$
\aligned
\prod_{k=1}^\infty \frac{1}{1 - q^{k-1}t^k} 
\, &= \, 
\prod_{k=1}^\infty \sum_{\ell_k = 0}^\infty q^{\ell_k (k-1)} t^{\ell_k k} 
\, = \, 
\sum_{\ell_1, \ell_1, \ell_3, \hdots \ge 0} \prod_{k=1}^\infty q^{\ell_k (k-1)} t^{\ell_k k}
\\
\, &= \, 
\sum_{n = 0}^\infty \Bigg( 
   \sum_{\substack{\ell_1, \ell_2, \ell_3, \hdots \ge 0 \\ 
   \ell_1 + 2\ell_2 + \cdots + k\ell_k + \cdots = n}} 
q^{\sum_{k=1}^\infty \ell_k (k-1)} \Bigg)  
t^n .
\endaligned
$$
For $n \ge 0$, there is a contribution to the coefficient of $t^n$
for each sequence $(\ell_1, \ell_2, \ell_3, \hdots)$ of non-negative integers
such that $\ell_1 + 2\ell_2 + 3\ell_3 + \cdots = n$,
equivalently for each partition $1^{\ell_1} 2^{\ell_2} 3^{\ell_3} \cdots$ of $n$,
with $\ell_1$ parts $1$, and $\ell_2$ parts $2$, and $\ell_3$ parts $3$, ..., 
equivalently for each conjugacy class in $\Sy (n)$.
Since $0\ell_1 + 1\ell_2 + 2\ell_3 + \cdots$ is the $S_n$-length of such a conjugacy class,
the contributions to the coefficient of $t^n$
add up precisely to $C_{\Sy (n), S_n} (q)$.

\vskip.2cm

\ref{ivDEfirstsample}
The polynomials of \ref{iiDEfirstsample} converge coefficientwise
towards the series of \ref{iDEfirstsample} 
because  $p_{n-k}(n)=p(k)$ when $2k \le n$.
See (\ref{Euler??}) in Appendix \ref{recallpk(n)}).
\hfill $\square$

\subsection{A computation of lengths}
% subsection 2.c
For the next two lemmas, 
we agree that $\Sy (n)$ denotes the group of permutations
of $\{0, 1, \hdots, n-1\}$, and we consider the generating set
$S^0_n$ defined just before Proposition \ref{propS0}.

\begin{lem}
% 14
\label{longS0smaller}
Let $g = c_1 c_2 \hdots c_k \in \Sy (n)$, where $c_1, \hdots, c_k$ are disjoint cycles,
each of length at least $2$;
set $m = \vert \su (g) \vert$.
$$
\ell_{\Sy (n), S^0_n} (g) \, \le \, 
\left\{ \aligned
m+k \hskip.8cm &\text{if} \hskip.2cm g(0) = 0,
\\
m+k-2 \hskip.5cm &\text{if} \hskip.2cm g(0) \ne 0 .
\endaligned \right.
$$
\end{lem}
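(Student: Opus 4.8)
The plan is to exhibit each cycle of $g$ as an explicit product of star transpositions $(0,i) \in S^0_n$ and then simply add up the lengths, splitting into the two cases according to whether $0$ is moved by $g$.

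First I would record two elementary identities in $\Sy(n)$, valid with the convention $(gh)(x)=g(h(x))$ and checked by evaluating both sides on each of $0,1,\dots,n-1$. For pairwise distinct $b_1,\dots,b_{r-1}\in\{1,\dots,n-1\}$ one has
\[
(0,b_{r-1},b_{r-2},\dots,b_1)\;=\;(0,b_1)(0,b_2)\cdots(0,b_{r-1}),
\]
so \emph{any} cycle of length $r$ whose support contains $0$ is a product of $r-1$ elements of $S^0_n$. For pairwise distinct $a_1,\dots,a_r\in\{1,\dots,n-1\}$ one has
\[
(a_1,a_2,\dots,a_r)\;=\;(0,a_1)\,(0,a_1,a_2,\dots,a_r),
\]
and since the second factor is a cycle of length $r+1$ containing $0$, the first identity shows that a cycle of length $r$ whose support avoids $0$ is a product of $1+r$ elements of $S^0_n$.

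Now write $g=c_1\cdots c_k$ with the $c_i$ disjoint, of respective support sizes $s_1,\dots,s_k$, so that $m=\sum_i s_i$. Replacing each $c_i$ by one of the products above and concatenating yields an expression of $g$ over $S^0_n$; this is legitimate because disjoint cycles commute, even though the star transpositions involved all share the letter $0$. If $g(0)=0$, then no $c_i$ moves $0$, so $c_i$ costs $s_i+1$ factors and
\[
\ell_{\Sy(n),S^0_n}(g)\;\le\;\sum_{i=1}^k (s_i+1)\;=\;m+k.
\]
If $g(0)\ne 0$, then $0$ lies in the support of exactly one cycle, say $c_1$, which costs $s_1-1$ factors, while each of $c_2,\dots,c_k$ costs $s_i+1$ factors; therefore
\[
\ell_{\Sy(n),S^0_n}(g)\;\le\;(s_1-1)+\sum_{i=2}^k (s_i+1)\;=\;(s_1-1)+(m-s_1)+(k-1)\;=\;m+k-2.
\]

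The computation itself is short; the only points needing attention are getting the two cycle identities right in the chosen multiplication convention, and observing that we are entitled to splice together the per-cycle factorizations despite the star transpositions being very far from having disjoint supports — we only need the resulting product to equal $g$, not to be reduced or disjoint.
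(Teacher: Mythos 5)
Your proof is correct and follows essentially the same route as the paper: writing each cycle through $0$ as a product of $\mu-1$ star transpositions and each cycle avoiding $0$ as a product of $\mu+1$ of them (your two identities expand to exactly the factorizations used in the paper), then summing over the cycles in the two cases. The only cosmetic difference is that you derive the second factorization from the first via $(a_1,\dots,a_r)=(0,a_1)(0,a_1,\dots,a_r)$ instead of writing it out directly, which changes nothing of substance.
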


\begin{proof}
Choose $i \in \{1, \hdots, k\}$.
Let $\mu_i$ denote the length of $c_i$, and write
$c_i = (x_1, x_2, \hdots, x_{\mu_i})$.
\par

If $\su (c_i)$ does not contain $0$, then 
$$
c_i \, = \, (0,x_1) (0, x_{\mu_i}) (0, x_{\mu_i - 1}) \cdots (0,x_2) (0, x_1)
$$
and $\ell_{\Sy (n), S^0_n}(c_i) \le \mu_i+1$.
If $\su (c_i)$ contains $0$, say $x_1 = 0$,
(this occurs for at most one value of $i$),  then 
$$
c_i \, = \, (0, x_{\mu_i}) (0, x_{\mu_i - 1}) (0, x_{\mu_i - 2})  \cdots (0, x_2) 
$$
and $\ell_{\Sy (n), S^0_n}(c_i) \le \mu_i-1$.
\par

Since $\ell_{\Sy (n), S^0_n}(g)\le \sum_{i=1}^k\ell_{\Sy (n), S^0_n}(c_i)$, 
the lemma follows.
\end{proof}

\begin{lem}
% 15
\label{longS0equalfinite}
Let $g = c_1 c_2 \hdots c_k \in \Sy (n)$ and $m = \vert \su (g) \vert$
be as in the previous lemma.
Then
$$
\aligned
\ell_{\Sy (n), S^0_n} (g) \, &= \, 
\left\{ \aligned
m+k \hskip.8cm &\text{if} \hskip.2cm g(0) = 0,
\\
m+k-2 \hskip.5cm &\text{if} \hskip.2cm g(0) \ne 0 ,
\endaligned \right.
\\
\kappa_{\Sy (n), S^0_n} (g) \, &= \, 
\phantom{bb}
m+k-2  \hskip.5cm \text{as soon as $g \ne \operatorname{id}$.}
\endaligned
$$
\end{lem}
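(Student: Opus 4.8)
The plan is to combine the upper bounds already furnished by Lemma~\ref{longS0smaller} with matching lower bounds obtained from a potential function on $\Sy(n)$, and then to read off the value of $\kappa$ from the invariance of cycle type under conjugation. For $\pi \in \Sy(n) = \Sy(\{0,1,\hdots,n-1\})$ I would set
$$
\Phi(\pi) \, = \, \vert \su(\pi) \vert \, + \, \nu(\pi) \, - \, 2\,\varepsilon(\pi) ,
$$
where $\nu(\pi)$ is the number of cycles of $\pi$ of length $\ge 2$, and $\varepsilon(\pi) \in \{0,1\}$ equals $1$ if and only if $\pi(0) \ne 0$. Then $\Phi(\operatorname{id}) = 0$, and for $g = c_1 c_2 \hdots c_k$ as in the statement, with $\vert\su(g)\vert = m$, one has $\Phi(g) = m+k$ if $g(0)=0$ and $\Phi(g)=m+k-2$ if $g(0)\ne 0$; so Lemma~\ref{longS0smaller} says precisely $\ell_{\Sy(n),S^0_n}(g) \le \Phi(g)$. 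The whole difficulty is the reverse inequality, and for that it suffices to prove the claim: \emph{for every $\pi \in \Sy(n)$ and every $a \in \{1,\hdots,n-1\}$ one has $\Phi\big((0,a)\pi\big) \le \Phi(\pi)+1$.} Granting this, take a shortest expression $g = s_1 s_2 \hdots s_M$ with $s_i \in S^0_n$ and $M = \ell_{\Sy(n),S^0_n}(g)$, put $g_M = \operatorname{id}$ and $g_{j-1} = s_j g_j$; then the claim gives $\Phi(g) = \Phi(g_0) \le \Phi(g_1)+1 \le \cdots \le \Phi(g_M)+M = M$, so together with the upper bound $\ell_{\Sy(n),S^0_n}(g) = \Phi(g)$, which is the first assertion of the lemma.

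To prove the claim I would run a case analysis on the position of $0$ and $a$ in the cycle decomposition of $\pi$. Writing $\pi' = (0,a)\pi$, one checks first that $\pi'$ and $\pi$ agree outside $\{0,a,\pi^{-1}(0),\pi^{-1}(a)\}$, so $\su(\pi') \bigtriangleup \su(\pi) \subseteq \{0,a\}$ and hence $\vert\su(\pi')\vert - \vert\su(\pi)\vert \in \{-2,0,2\}$; moreover the transposition $(0,a)$ merges the $\pi$-cycles of $0$ and $a$ when they are distinct, and splits their common cycle into two (one through $0$, one through $a$) when they coincide, while $\varepsilon$ changes by at most $1$. The cases are: (1) $\pi$ fixes both $0$ and $a$; (2) $\pi$ fixes exactly one of them; (3) $\pi$ moves both, lying in distinct cycles; (4) $\pi$ moves both, lying in a common cycle $C$, with subcases according to whether each of the two pieces produced by the split has length $1$ or $\ge 2$ (in particular whether $\vert C\vert = 2$). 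In every case a direct count of the three terms gives $\Phi(\pi') - \Phi(\pi) = \pm 1$, in particular $\le 1$. The point that needs care — and the only real obstacle — is that the a priori dangerous configurations do not occur: $\vert\su(\pi')\vert = \vert\su(\pi)\vert + 2$ forces $\pi$ to fix both $0$ and $a$, and then $\pi'$ merely adjoins the new $2$-cycle $(0,a)$, so $\nu$ goes up by $1$ and $\varepsilon$ from $0$ to $1$ and the net change is $2+1-2 = 1$; likewise $\varepsilon(\pi) = 1$, $\varepsilon(\pi') = 0$ forces $0$ to leave the support, so $\vert\su\vert$ drops; the genuinely impossible combinations (such as $\pi(0)=a$ together with $\pi(a)=a$) are ruled out by injectivity of $\pi$.

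For the conjugacy length, every conjugate $h$ of $g$ has the same cycle type, hence the same $m$, the same $k$, and the same number of non-trivial cycles, so by the length formula just proved $\ell_{\Sy(n),S^0_n}(h) = m+k$ if $h(0)=0$ and $= m+k-2$ otherwise. Thus $\kappa_{\Sy(n),S^0_n}(g) = m+k-2$ provided some conjugate of $g$ moves $0$; and if $g \ne \operatorname{id}$ this always happens, for one may pick $x \in \su(g)$ and any $\tau \in \Sy(n)$ with $\tau(x)=0$, whence $\su(\tau g \tau^{-1}) = \tau(\su(g)) \ni 0$. This gives $\kappa_{\Sy(n),S^0_n}(g) = m+k-2$ for $g \ne \operatorname{id}$, which completes the proof.
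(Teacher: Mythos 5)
Your proof is correct, but your route to the lower bound differs from the paper's. The paper works directly with a minimal word $g = r_1 \cdots r_L$ in $S^0_n$: it observes that for every $x \in \su(g)$ the generator $(0,x)$ must occur in the list, and then rules out $L < m+k$ (resp.\ $L < m+k-2$ when $0 \in \su(g)$) by showing that such a shortfall would force, for some cycle $c_i$, each $(0,x)$ with $x \in \su(c_i)$ to occur exactly once (or some $(0,x^1_j)$ not at all), which is incompatible with the cycle structure. You instead introduce the potential $\Phi(\pi) = \vert\su(\pi)\vert + \nu(\pi) - 2\varepsilon(\pi)$, verify the step inequality $\Phi\big((0,a)\pi\big) \le \Phi(\pi)+1$ by a case analysis on how left multiplication by $(0,a)$ merges or splits cycles (I checked the cases; the increment is always $\pm 1$, and the dangerous configurations are excluded exactly as you say), and then telescope along a geodesic word to get $\Phi(g) \le \ell_{\Sy(n),S^0_n}(g)$, matching the upper bound from Lemma~\ref{longS0smaller}. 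This is the same style of argument that proves the reflection-length formula via the invariant ``$n$ minus number of cycles'', and it has the advantage of being local and mechanical: one inequality per generator, no global analysis of a minimal word, and it yields the exact value of $\ell$ for every element in one sweep. The paper's count is shorter on the page but leaves more to the reader in the ``this is not possible'' steps. Your treatment of $\kappa$ (conjugation preserves $m$ and $k$, and any $g \ne \operatorname{id}$ has a conjugate moving $0$) coincides with the paper's.
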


\begin{proof}
Set $L  =  \ell_{\Sy (n), S^0_n} (g)$;
there exist $r_1, \hdots, r_L \in S^0_n$
such that $g = r_1 r_2 \cdots r_L$.
For $i \in \{1, \hdots, k\}$, there are distinct elements
$x^i_1, \hdots, x^i_{\mu_i} \in \{0, 1, \hdots, n-1\}$
such that $c_i = (x^i_1, x^i_2, \hdots, x^i_{\mu_i})$;
and $\mu_1 + \cdots + \mu_k = m$.
Observe that, for all $i \in \{1, \hdots, k\}$
and $j \in \{1, \hdots, \mu_i\}$,
the transposition $(0, x^i_j)$ occurs in the list $r_1, \hdots, r_L$,
at least once.
\par

Suppose first that $0 \notin \su (g)$.
We know from Lemma \ref{longS0smaller} that $L \le m+k$.
If one had $L < m+k$,
there would exist $i \in \{1, \hdots, k\}$ 
such that $(0,x)$ occurs only one time in the list $r_1, \hdots, r_L$
for each $x \in \su (c_i)$;
but this is not possible since $0 \notin \su (c_i)$. Hence $L = m+k$.
\par

Suppose now that $0 \in \su (g)$;
we can assume that $x^1_1 = 0$.
We know from Lemma \ref{longS0smaller} that $L \le m+k-2$.
If one had $L < m+k-2$,
at least one of the two following situations would hold:
\begin{enumerate}[noitemsep,label=(\alph*)]
\item\label{IDElongS0equalfinite}
there exists $i \in \{2, \hdots, k\}$ 
such that $(0,x)$ occurs only one time in the list  $r_1, \hdots, r_L$
for each $x \in \su (c_i)$,
\item\label{IIDElongS0equalfinite}
there exists
$j \in \{2, 3, \hdots, \mu_1\}$
such that the transposition $(0, x^1_j)$ does not occur
in the list $r_1, \hdots, r_L$;
\end{enumerate}
but this is not possible. Hence $L = m+k-2$, and the formula for $\ell_{\Sy (n), S^0_n} (g)$ follows.
\par
For all $g \ne \operatorname{id}$ in $\Sy (n)$, 
there exists a conjugate $h$ of $g$ such that $h(0) \ne 0$
to which the same computation applies.
The formula for $\kappa_{\Sy (n), S^0_n}(g)$ follows.
\end{proof}

Similarly:

\begin{lem}
% 16
\label{longS0equalinf}
Let $g = c_1 c_2 \cdots c_k \in \Sy (\N)$, 
where $c_1, \hdots, c_k$ are disjoint cycles, each of length at least $2$;
set $m = \vert \su (g) \vert$.
Then $\ell_{\Sy (\N), S^0_\N}(g)$ and $\kappa_{\Sy (\N), S^0_\N}(g)$ are given by 
the formulas of the previous lemma.
\end{lem}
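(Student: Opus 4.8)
The plan is to deduce this from the finite case already treated in Lemma~\ref{longS0equalfinite}, exploiting the fact that both $g$ and any word in $S^0_\N$ representing it involve only finitely many points of $\N$. First I would note that nothing changes for the upper bounds: the explicit factorizations used in the proof of Lemma~\ref{longS0smaller} --- namely $c_i = (0,x_1)(0,x_{\mu_i})(0,x_{\mu_i-1})\cdots(0,x_2)(0,x_1)$ when $0 \notin \su(c_i)$, and $c_i = (0,x_{\mu_i})(0,x_{\mu_i-1})\cdots(0,x_2)$ when $x_1 = 0$ --- use only transpositions $(0,x)$ with $x \in \su(g)$, hence lie in $S^0_\N$. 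Multiplying them over $i$ exhibits $g$ as a product of at most $m+k$ such transpositions when $g(0)=0$, and at most $m+k-2$ when $g(0) \ne 0$, so the bounds of Lemma~\ref{longS0smaller} hold verbatim in $\Sy(\N)$.

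For the matching lower bound I would start from a shortest expression $g = r_1 r_2 \cdots r_L$ with $r_1, \dots, r_L \in S^0_\N$, say $r_\nu = (0, x_\nu)$ with $x_\nu \ge 1$, and choose $N$ so large that $\{0, x_1, \dots, x_L\} \subset \{0, 1, \dots, N-1\}$; this is possible since the list is finite. Then each $r_\nu$ belongs to $S^0_N$ and $g = r_1 \cdots r_L$ lies in $\Sy(N) = \Sy(\{0, \dots, N-1\})$, with the same support size $m$, the same number $k$ of nontrivial cycles, and the same value $g(0)$. Hence $\ell_{\Sy(N), S^0_N}(g) \le L$, and Lemma~\ref{longS0equalfinite} forces $L \ge m+k$ when $g(0)=0$ and $L \ge m+k-2$ when $g(0) \ne 0$. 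Together with the first paragraph this gives the stated formula for $\ell_{\Sy(\N), S^0_\N}(g)$. For the conjugacy length with $g \ne \operatorname{id}$, conjugation in $\Sy(\N)$ preserves the cycle type, hence $m$ and $k$, so every conjugate $h$ of $g$ satisfies $\ell_{\Sy(\N), S^0_\N}(h) \ge m+k-2$; on the other hand, picking $a \in \su(g)$ and setting $h = (0,a)\,g\,(0,a)$ one checks $h(0) \ne 0$ (distinguishing $g(a)=0$ from $g(a)\ne 0$, exactly as in the finite case), so $\ell_{\Sy(\N), S^0_\N}(h) = m+k-2$, whence $\kappa_{\Sy(\N), S^0_\N}(g) = m+k-2$.

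I expect no genuine obstacle: the only subtlety is that, a priori, a word in $S^0_\N$ might be shortened by routing through points outside $\su(g)$, and the finite-support reduction above rules this out immediately. Alternatively, if a self-contained argument is preferred, one can repeat the combinatorics of the proof of Lemma~\ref{longS0equalfinite} directly in $\Sy(\N)$: the multigraph on $V := \{0\} \cup \{x_1, \dots, x_L\}$ with an edge for each $r_\nu$ is finite, each transposition $(0,x)$ with $x \in \su(g)$ must occur in the list, and the counting of occurrences that yields $L \ge m+k$ (resp. $m+k-2$) goes through unchanged.
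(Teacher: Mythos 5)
Your proposal is correct and matches the paper's intent: the paper gives no separate argument for this lemma beyond the word ``Similarly,'' meaning that the computations of Lemmas \ref{longS0smaller} and \ref{longS0equalfinite} carry over to $\Sy(\N)$, which is precisely what you verify. Your finite-support reduction (any word $r_1\cdots r_L$ in $S^0_\N$ lives in some $\Sy(N)$ with the same $m$, $k$, and value of $g(0)$, so Lemma \ref{longS0equalfinite} applies directly) is a clean way of making that implicit step rigorous, and your alternative of repeating the counting argument verbatim is exactly the paper's reading.
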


\subsection{Proof of Proposition \ref{propS0}}
% subsection2.d
\label{subsectionproofS0}
We record a minor variation of  Observation \ref{partconjclasses}, as follows.
Given $m \ge 2$ and $k \ge 1$,
there is a bijection between
\begin{enumerate}[noitemsep,label=(\alph*)]
\item\label{aDEproofS0}
the set of partitions of $m$ with $k$ parts, all at least $2$,
\\
(i.e.\ partitions of the form
$\mu = (\mu _1, \hdots, \mu _k) \vdash m$
with $\mu _1 \ge \cdots \ge \mu _k \ge 2$),
\end{enumerate}
and
\begin{enumerate}[noitemsep,label=(\alph*)]
\addtocounter{enumi}{1}
\item\label{bDEproofS0}
the set of conjugacy classes of elements $g \ne 1$ 
in $\Sy (\N)$ or $\Sy (n)$,
\\
with $\vert \su (g) \vert = m$,
which are products of $k$ disjoint cycles,
\\
where moreover $m \le n$ in the case of $\Sy (n)$
\\
(i.e.\ of elements of the form $g = c_1 \cdots c_k$ 
with $\operatorname{length}(c_i) = \mu _i$).
\end{enumerate}
For each $\mu$ as in \ref{aDEproofS0}, set
\begin{enumerate}[noitemsep,label=(\alph*)]
\addtocounter{enumi}{2}
\item[]
$\nu = (\nu_1, \hdots, \nu_k) := (\mu_1 - 1, \cdots, \mu_k - 1)
\vdash m-k$.
\\
which is a partition in $k$ positive parts.
\end{enumerate}
The relevant length of the conjugacy class of $g$ 
as in \ref{bDEproofS0} is $m+k-2$,
by Lemmas \ref{longS0equalfinite} and \ref{longS0equalinf}.

\vskip.2cm

For (i) of Proposition \ref{propS0}, it follows that 
$$
\aligned
C_{\Sy (\N), S^0_\N}(q)
\, &= \, 
\sum_{m=0}^\infty \gamma_{\Sy(\N), S^0_\N} (m) q^m 
\\
\, &= \, 
1 + \sum_{m=2}^\infty  \sum_{k=1}^{\lfloor m/2 \rfloor}
      p_k(m-k) q^{m+k-2}
\\
\, &= \, 
1 + \sum_{k=1}^\infty  q^{2k-2} \sum_{m=2k}^\infty p_k (m-k) q^{m-k}
\\
\, &= \, 
1 + \sum_{k=1}^\infty  q^{2k-2} \sum_{n=k}^\infty p_k (n) q^{n}
\\
\, &= \, 
1 + \sum_{k=1}^\infty q^{3k-2} \prod_{j=1}^k \frac{1}{1-q^j}
\endaligned
$$
where the last equality holds by (\ref{Euler312}) of Appendix \ref{recallpk(n)}.

\vskip.2cm

(ii)
Similarly:
$$
\aligned
C_{\Sy (n), S^0_n} (q) \, &= \, 
1  \, + \,  \sum_{m=2}^n  \sum_{k=1}^{\lfloor m/2 \rfloor}
p_k (m-k) q^{m+k-2}
\\
\, &= \, 
1 \, + \, \sum_{k=1}^{\lfloor n/2 \rfloor} q^{2k-2} \sum_{m=2k}^n p_k(m-k)q^{m-k}
\\
\, &= \, 
1 \, + \, \sum_{k=1}^{\lfloor n/2 \rfloor} q^{2k-2} \sum_{j=k}^n p_k(j)q^{j}
\hskip.1cm .
\endaligned
$$
(\emph{Note:}
$\sum_{j=k}^n p_k(j)q^{j} =  \sum_{j=0}^n p_k(j)q^{j}$.)
It is now clear that these polynomials converge coefficientwise to
$1 + \sum_{m=2}^\infty  \sum_{k=1}^{\lfloor m/2 \rfloor} p_k(m-k) q^{m+k-2}$,
that is to $C_{\Sy (\N), S^0_\N}(q)$.
 \hfill $\square$
 
\vskip.2cm

    We end this section with a sharpening of 
Claim \ref{ivDEfirstsample} of Proposition \ref{firstsample};
this applies more generally to the situation of Proposition \ref{sharpening}.
Let $S \subset \Sy (\N)$ be a partition-complete set of transpositions,
and let $L$ be a non-negative integer.
Set
$$
\mathcal K_L(S) \, = \, \{ g \in \Sy (\N) \mid \kappa_{\Sy (\N), S}(g) = L \} .
$$
Observe that $\mathcal K_L(S)$ is a union of conjugacy classes in $\Sy (\N)$.
For $g \in \Sy (\N)$, we denote by $k_g$ the number of disjoint cycles
of which $g$ is the product.

\begin{lem}
\label{lemmalimit}
% 17
Let $S$, $L$, and $\mathcal K_L(S)$ be as above.
\begin{enumerate}[noitemsep,label=(\roman*)]
\item\label{iDElemmalimit}
Let $g \in \mathcal K_L(S)$. Then
$\vert \su (g) \vert = L + k_g \le 2L$
for all $g \in \mathcal K_L(S)$.
Equality $k_g = L$ holds if and only if $g$ is a product
of $L$ disjoint transpositions.
\item\label{iiDElemmalimit}
Let $s \in \N$ be such that $0 \le s \le L/2$.
Then $\mathcal K_L(S)$ contains exactly $p(s)$ conjugacy classes
of elements $g$ such that $\vert \su (g) \vert = 2L-s$.
\end{enumerate}
\end{lem}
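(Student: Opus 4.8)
The plan is to reduce the whole statement to the cycle-structure parametrization of conjugacy classes from Observation~\ref{partconjclasses}, combined with the conjugacy-length formula of Lemma~\ref{reformlemma3}. Recall that an element $g = c_1 \cdots c_k \in \Sy(\N)$, written as a product of $k$ disjoint cycles of lengths $\lambda_1 + 1 \ge \dots \ge \lambda_k + 1 \ge 2$, corresponds to the partition $\lambda = (\lambda_1, \dots, \lambda_k) \vdash L$ with $L = \sum_i \lambda_i$; by Lemma~\ref{reformlemma3} one has $\kappa_{\Sy(\N),S}(g) = L$, while directly $\vert \su(g) \vert = L + k$. Hence $g \in \mathcal{K}_L(S)$ exactly when its associated partition has weight $L$, in which case $k_g = k$ is the number of parts of that partition. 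From this dictionary both assertions become statements about partitions of $L$.

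For part~\ref{iDElemmalimit} I would argue as follows: each part $\lambda_i$ of the partition attached to $g \in \mathcal{K}_L(S)$ satisfies $\lambda_i \ge 1$, so $k_g \le \lambda_1 + \dots + \lambda_{k_g} = L$, whence $\vert \su(g) \vert = L + k_g \le 2L$. The extreme case $k_g = L$ forces $\lambda_i = 1$ for every $i$, i.e.\ every $c_i$ is a transposition, so $g$ is a product of $L$ disjoint transpositions; conversely any such product has $k_g = L$. (The degenerate case $L = 0$, where $\mathcal{K}_L(S) = \{\operatorname{id}\}$ and $\operatorname{id}$ is the empty product of transpositions, fits this.)

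For part~\ref{iiDElemmalimit}, fix $s$ with $0 \le s \le L/2$. Using the dictionary above, the conjugacy classes of elements $g \in \mathcal{K}_L(S)$ with $\vert \su(g) \vert = 2L - s$ are exactly those whose partition $\lambda \vdash L$ has precisely $k_g = L - s$ parts. Writing $\lambda_i = 1 + \mu_i$ sets up a bijection between such $\lambda$ and partitions $\mu = (\mu_1, \dots, \mu_{L-s})$ of $s$ with at most $L - s$ (possibly zero) parts. Since any partition of $s$ has at most $s$ parts and $s \le L/2 \le L - s$, the bound ``at most $L - s$ parts'' imposes nothing, so the count is $p(s)$, as claimed. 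I expect the only delicate point to be this last translation — keeping track of the shift between cycle lengths and the $\lambda_i$, and noticing that the hypothesis $s \le L/2$ is precisely what makes the restriction on the number of parts vacuous; everything else is immediate from Observation~\ref{partconjclasses} and Lemma~\ref{reformlemma3}.
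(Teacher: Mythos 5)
Your proof is correct and follows essentially the same route as the paper: both parts reduce, via Observation~\ref{partconjclasses} and Lemma~\ref{reformlemma3}, to counting partitions of $L$ with a prescribed number of parts, and part~\ref{iiDElemmalimit} uses the same "subtract $1$ from each part" bijection with partitions of $s$, the hypothesis $s \le L/2$ ensuring (exactly as in the paper's explicit two-way construction) that the constraint on the number of parts is vacuous.
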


\begin{proof}
\ref{iDElemmalimit}
Let $g \in \mathcal K_L(S)$ be written as a product
$c_1 \cdots c_{k_g}$ of disjoint cycles of decreasing sizes.
For $i \in \{1, \hdots, k_g\}$, 
denote by $\lambda_i + 1$ the length of $c_i$;
set $\lambda = (\lambda_1, \hdots, \lambda_{k_g})$,
so that $\lambda \vdash L$ by Lemma \ref{reformlemma3}.
Since $k_g \le L$,
we have $\vert \su (g) \vert = L + k_g \le 2L$.
If $\vert \su (g) \vert = 2L$, then $\lambda_i = 1$ for $i=1, \hdots, k_g$,
and every $c_i$ is a transposition.

\vskip.2cm

\ref{iiDElemmalimit}
Let $s$ be such that $0 \le s \le L/2$.
We proceed to establish a bijection
between the set of partitions of $s$ on the one hand,
and the set of conjugacy classes of elements $g \in \Sy (\N)$
such that $g \in \mathcal K_L(S)$ and $\vert \su (g) \vert = 2L-s$
on the other hand; this will end the proof.
As Claim \ref{iDElemmalimit} covers the case $s=0$,
we could assume that $s \ge 1$.
\par

Choose a partition $\mu = (\mu_1, \hdots, \mu_m) \vdash s$.
Since $s \le L/2$, we have $L-s \ge m$. Set
$$
\lambda \, = \, (\lambda_1, \hdots, \lambda_{L-s}) \, = \, 
(\mu_1 + 1, \hdots, \mu_m + 1, 1, \hdots 1) ,
$$
a partition of $L$ with $L-(s+m)$ parts $1$. 
Let $g \in \Sy (\N)$ be a product of disjoint cycles of lengths
$\lambda_1 + 1, \hdots, \lambda_{L-s} + 1$.
Then
$$
\kappa_{\Sy (\N), S}(g) \, = \, \sum_{j=1}^{L-s} \lambda_j
\, = \, \Big( \sum_{j=1}^m \mu_j \Big) + L-s \, = \, L ,
$$
in particular $g \in \mathcal K_L(S)$, and
$$
\vert \su (g) \vert \, = \, \sum_{j=1}^{L-s} (\lambda_j + 1) \, = \, 2L-s .
$$
\par
Conversely, choose $g \in \mathcal K_L(S)$ with $\vert \su (g) \vert = 2L-s$.
Let $\lambda_1 + 1, \hdots, \lambda_{L-s} + 1$
be the lengths, in decreasing order, of the disjoint cycles
of which $g$ is the product; 
note that $\lambda = (\lambda_1, \hdots, \lambda_{L-s}) \vdash L$.
Define a partition $\mu = (\mu_1, \hdots, \mu_m)$
by $m = \max \{ j \in \{1, \hdots, L-s\} \hskip.1cm \vert \hskip.1cm \lambda_j \ge 2 \}$,
and $\mu_j = \lambda_j - 1$ for $j \in \{1, \hdots, m\}$. 
Then $\mu \vdash L - (L-s) = s$.
\end{proof}

Here is the announced sharpening, see Propositions \ref{firstsample} and \ref{sharpening}.

\begin{prop}
% 18
\label{betterthan1}
Let $S$ be a partition-complete set of transpositions in $\Sy (\N)$
and, for each $m \ge 1$, let $S_m$ be a partition-complete set of transpositions in $\Sy (m)$.
Write $C_\infty(q)$ for $C_{\Sy(\N), S}(q)$
and $C_m(q)$ for $C_{\Sy(m), S_m}(q)$.
Then:
$$
\aligned
&
\lim_{n \to \infty} \frac{1}{q^{n+1}} 
   \big( C_\infty(q) - C_{2n+1}(q) \big)
\, = \, 
\sum_{i=0}^\infty p(\le 2i)q^i \hskip.1cm ,
\\
&
\lim_{n \to \infty} \frac{1}{q^{n+1}} 
   \big( C_\infty(q) - C_{2n}(q) \big)
\, = \, 
\sum_{i=0}^\infty p(\le (2i+1))q^i \hskip.1cm ,
\endaligned
$$
where $p(\le j) := p(0)+p(1)+\dots+p(j)$ for all $j \in \N$.
\end{prop}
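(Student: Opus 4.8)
The plan is to extract the coefficients of $C_\infty(q)-C_m(q)$ from the correspondence between conjugacy classes and partitions, to rewrite them using Lemma~\ref{lemmalimit}, and finally to put $m=2n+1$ (resp.\ $m=2n$) and let $n\to\infty$ while following a fixed coefficient.

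First I would set up the bookkeeping. By Observation~\ref{partconjclasses} together with Lemma~\ref{reformlemma3}, the conjugacy classes of $\Sy(\N)$ contained in $\mathcal K_L(S)$ are in bijection with the partitions $\lambda\vdash L$: the class of a product $g$ of $k$ disjoint cycles of lengths $\lambda_1+1,\hdots,\lambda_k+1$ (so that $k$ is the number of parts of $\lambda$) satisfies $\kappa_{\Sy(\N),S}(g)=L$ and $\vert\su(g)\vert=L+k$. The same description holds inside $\Sy(m)$, with the single extra requirement $L+k\le m$ needed for such a $g$ to exist on $\{1,\hdots,m\}$, and still $\kappa_{\Sy(m),S_m}(g)=L$ by Lemma~\ref{reformlemma3}. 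Hence the coefficient of $q^L$ in $C_m(q)$ is the number of partitions of $L$ with at most $m-L$ parts, the coefficient of $q^L$ in $C_\infty(q)$ is $p(L)$, and consequently the coefficient of $q^L$ in $C_\infty(q)-C_m(q)$ is the number of conjugacy classes $\Gamma$ in $\mathcal K_L(S)$ with $\vert\su(\Gamma)\vert>m$.

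Now I would apply Lemma~\ref{lemmalimit}. Its part~\ref{iDElemmalimit} gives $L<\vert\su(g)\vert\le 2L$ for $g\in\mathcal K_L(S)$, so writing $\vert\su(g)\vert=2L-s$ one has $0\le s\le L-1$; and its part~\ref{iiDElemmalimit} asserts that, for $0\le s\le L/2$, there are exactly $p(s)$ such classes. It follows that whenever $3L\le 2(m+1)$, which is precisely the condition that all of $s=0,1,\hdots,2L-m-1$ lie in the admissible range $s\le L/2$, the coefficient of $q^L$ in $C_\infty(q)-C_m(q)$ equals $\sum_{s=0}^{2L-m-1}p(s)=p\big(\le(2L-m-1)\big)$, whereas if $2L\le m$ that coefficient is $0$.

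It remains to specialize and pass to the limit. Take $m=2n+1$ and $L=n+1+i$ with $i\ge 0$ fixed: then $2L-m-1=2i$, the inequality $3L\le 2(m+1)$ reduces to $n\ge 3i-1$, and for $L\le n$ one has $2L\le 2n<m$, so the coefficient vanishes there; hence $C_\infty(q)-C_{2n+1}(q)$ is divisible by $q^{n+1}$, and the coefficient of $q^i$ in $\tfrac{1}{q^{n+1}}\big(C_\infty(q)-C_{2n+1}(q)\big)$, i.e.\ the coefficient of $q^{n+1+i}$ in $C_\infty(q)-C_{2n+1}(q)$, equals $p(\le 2i)$ for every $n\ge 3i-1$. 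Letting $n\to\infty$ yields the first formula. The case $m=2n$, $L=n+1+i$ is identical: now $2L-m-1=2i+1$ and $3L\le 2(m+1)$ becomes $n\ge 3i+1$, so the coefficient of $q^i$ in $\tfrac{1}{q^{n+1}}\big(C_\infty(q)-C_{2n}(q)\big)$ equals $p(\le(2i+1))$ for $n$ large, giving the second formula. The only point needing care is the restriction $s\le L/2$ in Lemma~\ref{lemmalimit}\ref{iiDElemmalimit}: one must check that for each fixed coefficient index $i$ the admissible window eventually contains all the occurring values of $s$, which is exactly the threshold on $n$ found above; the rest is routine counting.
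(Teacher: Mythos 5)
Your argument is correct and follows essentially the same route as the paper's own proof: both rest on the observation that a class in $\mathcal K_L(S)$ meets $\Sy(m)$ exactly when $\vert\su\vert\le m$, and on Lemma \ref{lemmalimit}\ref{iiDElemmalimit} to count the excluded classes by $p(s)$, with the same threshold ($3L\le 2(m+1)$, i.e.\ $n\ge 3i-1$ resp.\ $n\ge 3i+1$) guaranteeing that all relevant $s$ stay in the admissible range $s\le L/2$. Your version merely makes explicit two points the paper leaves implicit (the coefficient of $q^L$ in $C_m$ as partitions of $L$ with at most $m-L$ parts, and the divisibility of $C_\infty-C_m$ by $q^{n+1}$), which is fine.
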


\begin{proof}
Note first that, for $L, m, k \in \N$, 
a conjugacy class in $\mathcal K_L(S)$
of elements $g$ such that $\vert \su (g) \vert = L+k$
intersects $\Sy (m)$ if and only if $L+k \le m$.
\par

Let $n \ge 1$. Choose an integer $k$ such that $1 \le k \le \frac{n+4}{3}$.
Let $\mathcal C$ be a conjugacy class in $\Sy (\N)$
such that $\mathcal C \subset \mathcal K_{n+k}(S)$.
\par

Suppose that $\mathcal C$ contributes to the coefficient of $q^{n+k}$ in $C_\infty (q)$
and not to the coefficient of $q^{n+k}$ in $C_{2n+1}(q)$.
Equivalently, suppose that, for every $g \in \mathcal C$,
we have $\vert \su (g) \vert \ge 2n+2$;
if $s \ge 0$ is defined by $\vert \su (g) \vert = 2(n+k)-s$,
this means that $s \le 2k-2$.
Since $k \le \frac{n+4}{3}$, i.e.\ $\frac{3k-4}{2} \le \frac{n}{2}$,
we have $s \le \frac{3k-4}{2} + \frac{k}{2} \le \frac{n+k}{2}$,
so that $\mathcal C$ is one of the $\sum_{s=0}^{2k-2} p(s)$ classes 
which appear in Lemma \ref{lemmalimit}\ref{iiDElemmalimit}.
It follows that the coefficient of $q^{n+k}$
in $C_\infty (q) - C_{2n+1}(q)$ is $p(\le (2k-2))$,
so that the coefficient of $q^{k-1}$ in $\frac{1}{q^{n+1}} (C_\infty (q) - C_{2n+1}(q))$
is $p(\le(2k-2))$ for $k$ with $1 \le k \le \frac{n+4}{3}$.
Consequently, for given $i \in \N$, the coefficient of $q^i$ 
in $\frac{1}{q^{n+1}} (C_\infty (q) - C_{2n+1}(q))$ is $p(\le 2i)$
as soon as $n$ is large enough.
\par

Similarly, suppose that $\mathcal C$ contributes to the coefficient of $q^{n+k}$ in $C_\infty (q)$
and not to the coefficient of $q^{n+k}$ in $C_{2n}(q)$.
A similar argument shows that $\mathcal C$
is one of the $\sum_{s=0}^{2k-1} p(s)$ classes 
which appear in Lemma \ref{lemmalimit}\ref{iiDElemmalimit},
and finally that, for $i \in \N$,  the coefficients of $q^i$
in $\frac{1}{q^{n+1}} (C_\infty (q) - C_{2n}(q))$
is $p(\le (2i+1))$ for $n$ large enough.
\end{proof}

\section{\textbf{Some wreath products}}
% section3
\label{sectionwreath}
Consider a non-empty set $X$, a group $H$,
and the \textbf{permutational wreath product} $H \wr_X \Sy (X) := H^{(X)} \rtimes \Sy (X)$.
Here, $H^{(X)}$ denotes the group of functions from $X$ to $H$
having finite support, for the pointwise multiplication,
and the semi-direct product ``$\rtimes$'' refers to 
the natural action of $\Sy (X)$ on $H^{(X)}$,
i.e.\ to $f \in \Sy (X)$ acting 
on $\psi \in H^{(X)}$
by $\psi \longmapsto f(\psi) := \psi \circ f^{-1}$.
The multiplication in this wreath product is given by
$(\varphi, f)(\psi, g) = (\varphi f(\psi), fg)$,
for $\varphi, \psi \in H^{(X)}$ and $f,g \in \Sy (X)$.
There is a natural action of the group $H \wr_X \Sy (X)$ on the set $H \times X$,
for which $(\varphi, f)$ acts by $(h,x) \longmapsto (\varphi(f(x))h, f(x))$;
this action is faithful.
\par

For $a \in H \smallsetminus \{1\}$ and  $u \in X$, 
denote by $\varphi^a_u \in H \wr_X \Sy (X)$
the permutation that maps $(h,x) \in H \times X$ to
$(ah, u)$ if $x=u$, and to $(h,x)$ otherwise; 
the support of $\varphi^a_u$ is the set $\{(h,u)\}_{h \in H}$.
Observe that $\left( \varphi^a_u \right)_{a \in H \smallsetminus \{1\}, u \in X}$ generates
the subgroup $H^{(X)}$, 
and that $\varphi^a_u, \varphi^b_v$ are conjugate in $H \wr_X \Sy (X)$
if and only if $a, b$ are conjugate in $H$.
\par

For $u \in X$, we denote by $H_u$ the set of elements 
$\varphi^a_u$ for $a \in H \smallsetminus \{1\}$,
and by $T_H$ the subset $\bigcup_{u \in X} H_u$ of $H^{(X)}$;
recall that $T_X$ is the subset of all transpositions in $\Sy (X)$.
Consider subsets $S_H \subset T_H$ and $S_X \subset T_X$,
and define $S$ to be the disjoint union $S_H \sqcup S_X$, inside $H \wr_X \Sy (X)$.
It is again elementary to check that
\begin{equation}
\label{GCwr}
\tag{GCwr}
\aligned
& \text{if $\Gamma(S_X)$ is connected and if
$S_H = \{ \varphi^{a_1}_{u_1}, \hdots, \varphi^{a_r}_{u_r} \}$}
\\
& \text{for some generating subset $\{a_1, \hdots, a_r\} \subset H$}
\\
& \text{and some sequence $u_1, \hdots, u_r$ of points of $X$.}
\\
&\text{then the group $H \wr_X \Sy (X)$ is generated by $S$.}
\endaligned
\end{equation}
\par

When $X$ is infinite, we consider subsets of $H \wr_X \Sy (X)$
of the form $S = S_H \sqcup S_X$ that satisfy the following condition:
\begin{equation}
% Eq 
\tag{PCwr}
\label{PCwr}
\aligned
& \text{the transposition graph $\Gamma(S_X)$ is connected and,}
\\
& \text{for all $L \ge 0$ and partition $\lambda = (\lambda_1, \hdots, \lambda_k) \vdash L$,}
\\
& \text{$\Gamma(S_X)$ contains a forest of $k$ trees $T_1, \hdots, T_k$,} 
\\
& \text{with $T_i$ having $\lambda_i$ vertices, including one of them, say $x^{(i)}$,}
\\
& \text{such that $\varphi^a_{x^{(i)}} \in S_H$ for all $a \in H \smallsetminus \{1\}$}.
\endaligned
\end{equation}
(The conditions ``for all $a \in H \smallsetminus \{1\}$''
could be replaced by ``for all $a$ in a set of representatives of the
conjugacy classes in $H$ distinct from $\{1\}$''.)

\begin{prop}
% 19
\label{wreath}
Let $H$ be a finite group;
denote by $M$ the number of conjugacy classes in $H$.
Consider an infinite set $X$, the wreath product $W = H \wr_X \Sy (X)$, 
and a generating subset $S$ that satisfies Condition \emph{(\ref{PCwr})}.
Then
$$
C_{W,S}(q) \, = \, \prod_{k=1}^\infty \frac{1}{(1 - q^k)^M}  \hskip.1cm .
$$
\end{prop}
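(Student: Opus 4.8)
The plan is to reduce the computation of $C_{W,S}(q)$ to a combinatorial description of conjugacy classes in the wreath product $W = H \wr_X \Sy(X)$, together with a computation of their $S$-conjugacy lengths, and then to recognize the resulting generating function as the stated infinite product. First I would recall the standard description of conjugacy classes in $H \wr_X \Sy(X)$: an element $(\varphi, f)$ with $f$ having cycle decomposition $f = c_1 \cdots c_k$ (cycles of length $\ge 2$, plus fixed points) is determined up to conjugacy by, for each cycle $c_i = (x^i_1, \dots, x^i_{\mu_i})$, the conjugacy class in $H$ of the ``cycle product'' $\varphi(x^i_{\mu_i}) \cdots \varphi(x^i_1)$, and separately, for each point $x$ not in the support of $f$, the conjugacy class in $H$ of $\varphi(x)$; since $\varphi$ has finite support, all but finitely many of these are trivial. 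Thus a conjugacy class of $W$ is encoded by a finite multiset of pairs $(\mu_i, [h_i])$ with $\mu_i \ge 1$ and $[h_i]$ a conjugacy class of $H$, subject to the convention that the pair $(1, [1])$ (a fixed point carrying the identity) is dropped. Equivalently, a conjugacy class corresponds to a finite collection, for each conjugacy class $c$ of $H$, of a partition $\lambda^{(c)}$ where a part of size $\mu$ records a length-$\mu$ cycle decorated by $c$, except that parts of size $1$ are allowed only for the $M-1$ nontrivial classes $c$.

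Next I would compute the $S$-conjugacy length. By Condition (\ref{PCwr}), given any target cycle-type $\lambda \vdash L$ I can find in $\Gamma(S_X)$ a forest of trees $T_1, \dots, T_k$ with $|T_i| = \lambda_i$, each containing a vertex $x^{(i)}$ with $\varphi^a_{x^{(i)}} \in S_H$ for every $a$. The key observation, generalizing Lemma \ref{lemmafcl}\ref{iiiDElemmafcl} (via Lemma \ref{arbrecycle}): using the $\lambda_i - 1$ edge-transpositions of $T_i$ one realizes (up to conjugacy) a single $\mu_i$-cycle with $\mu_i = \lambda_i$, at cost $\lambda_i - 1$; and inserting one generator $\varphi^{a}_{x^{(i)}}$ at the distinguished vertex, at cost $1$ per nontrivial $H$-class to be placed on that cycle — more precisely, to decorate the cycle with a prescribed $H$-conjugacy class $c$, one pays exactly the minimal $S_H$-word length needed to produce a representative of $c$ in $H$, and an easy lower-bound argument (counting, as in the proof of Lemma \ref{lemmafcl}\ref{iiDElemmafcl}, how edges and $H_u$-generators must appear in any word for $(\varphi,f)$) shows this is optimal. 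For the generating-function identity only the \emph{count} matters, so I would package this as: the conjugacy length of the class encoded by partitions $(\lambda^{(c)})_{c \in \mathrm{Conj}(H)}$ is $\sum_c \big( |\lambda^{(c)}| - (\text{number of parts of } \lambda^{(c)}) \big) + (\text{total number of parts across all } c)$, i.e.\ the cycle-edge cost plus one unit per cycle to install its $H$-decoration — and then rescale so that each part of size $\mu$ in $\lambda^{(c)}$ contributes $q^{\mu}$ for $c$ nontrivial, and $q^{\mu-1}$ for $c$ trivial.

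With the dictionary in hand the series factors over conjugacy classes of $H$. For the trivial class, the contribution is exactly $C_{\Sy(X), S_X}(q) = \prod_{j \ge 1}(1-q^j)^{-1}$ by Proposition \ref{sharpening}(a) (here $S_X$ is partition-complete by (\ref{PCwr})): each part of size $\mu \ge 2$ costs $q^{\mu-1}$, parts of size $1$ are disallowed, so this is $\prod_{j \ge 1}(1 - q^{j-1})^{-1}$ restricted appropriately — more transparently, it is $\sum_L p(L) q^L = \prod_{j\ge 1}(1-q^j)^{-1}$ by the same reindexing as in the proof of Proposition \ref{firstsample}\ref{iDEfirstsample}. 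For each of the $M-1$ nontrivial classes $c$, the contribution is a sum over \emph{all} partitions (parts of every size $\ge 1$ allowed) weighted by $q^{|\lambda|}$, which is $\prod_{j \ge 1}(1-q^j)^{-1}$ as well — same series, just with the size-$\mu$ parts weighted $q^\mu$ instead of $q^{\mu-1}$, and reindexing $j \mapsto j$ gives $\prod_{j\ge 1}(1-q^j)^{-1}$ again. Multiplying the $M$ identical factors yields $C_{W,S}(q) = \big(C_{\Sy(X),S_X}(q)\big)^M = \prod_{k\ge 1}(1-q^k)^{-M}$, which is the claim (and matches Proposition \ref{prewreath} with $N = M$). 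The main obstacle is the conjugacy-length computation: proving the lower bound $\kappa_{W,S}(\varphi,f) \ge$ (claimed value), which requires the multigraph/forest counting argument of Lemma \ref{lemmafcl}\ref{iiDElemmafcl} adapted to track, simultaneously, the $\Gamma(S_X)$-edges needed to assemble the underlying cycle structure of $f$ and the $H_u$-generators needed at the distinguished vertices to assemble the $H$-decorations, and checking that these two kinds of cost cannot be shared or reduced below the stated sum.
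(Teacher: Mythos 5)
Your proposal follows essentially the same route as the paper: parametrize the conjugacy classes of $W$ by $H_*$-indexed families of partitions (the paper's ``type''), observe that a cycle decorated by the trivial class of $H$ contributes its length minus one to the conjugacy length while a cycle decorated by a nontrivial class contributes its length (this is exactly where Condition (\ref{PCwr}) enters, as you use it), and multiply the $M$ resulting partition generating functions to obtain $\prod_{k\ge 1}(1-q^k)^{-M}$. The one blemish is your displayed length formula (``plus one unit per cycle''), which over-counts trivially decorated cycles; but the rescaling you state immediately after — weight $q^{\mu}$ for a nontrivially decorated $\mu$-cycle and $q^{\mu-1}$ for a trivially decorated one — is the correct rule and is what your final factorization actually uses, so the argument agrees with the paper's proof.
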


Set $\prod_{k=1}^\infty \frac{1}{(1 - q^k)^M} = \sum_{n=0}^\infty p(n)_{(M)} q^n$.
For low values of the integer $M$, the sequences $\big( p(n)_{(M)} \big)_{n=0, 1, 2, \hdots}$
are well documented. 
For example, with A000041 and other similar numbers referring to those of \cite{OEIS},
we have:
$$
\aligned
&1, 1, 2, 3, 5, 7, 11, 15, 22, 30, 42, 
% 56, 77, 
\hdots 
\hskip.2cm \text{for} \hskip.2cm (p(n)_{(1)})_{n \ge 0}, \hskip.2cm \text{see A000041;}
\\
&1, 2, 5, 10, 20, 36, 65, 110, 185, 300, 481, 
% 481, 752,      
\hdots
\hskip.2cm \text{for} \hskip.2cm (p(n)_{(2)})_{n \ge 0}, \hskip.2cm \text{see A000712;}
\\
&1, 3, 9, 22, 51, 108,  221, 429, 810, 1479,  2640, \hdots
\hskip.2cm \text{for} \hskip.2cm (p(n)_{(3)})_{n \ge 0}, \hskip.2cm \text{see A000716;}
\\
&1,12, 90, 520, 2535, 10908, 42614, 153960, 
% 521235,  
\hdots
\hskip.2cm \text{for} \hskip.2cm (p(n)_{(12)})_{n \ge 0}, \hskip.2cm \text{see A005758;}
\\
&\text{for} \hskip.2cm (p(n)_{(M)})_{n \ge 0}
\hskip.2cm  \text{when} \hskip.2cm  4 \le M \le 23, \hskip.2cm  M \ne 12, \hskip.2cm  \text{see A023003 to A023021.} 
\endaligned
$$
See also Section \ref{sectionRama} and Appendix \ref{AppendixC}
for some congruence relations satisfied by the coefficients 
$p(n)_{(M)}$.

\begin{proof}[Proof of Proposition \ref{wreath}]
In this proof, we write $G$ for $\Sy (X)$
and $W$ for $H \wr_X \Sy (X) = H^{(X)} \rtimes G$,

\vskip.2cm

\emph{Preliminary Remark.}
There are several ways to associate a conjugacy class in a symmetric group to a partition.
For example, when $X = \N$, 
in  Observation \ref{partconjclasses} above and many other places of this article,
the conjugay class associated to 
a  partition such as $(3,3,1) \vdash 7$ is that of
$$
(1,2,3,4)(5,6,7,8)(9,10) \in \Sy (\N) .
$$
In other places, in particular at some point of the present proof,
some fixed points of permutations are counted as parts of size $1$,
so that the conjugacy class associated to the same partition is that of\footnote{At this point,
it could be more consistent to include some fixed points in
cycle decompositions of permutations,
and thus to write $(1,2,3)(4,5,6)(7)(8)(9)\cdots \in \Sy (\N)$. }
$$
(1,2,3)(4,5,6) \in \Sy (\N) .
$$
This is the reason for which we use below one symbol, $\lambda$,
for a partition indexed by $1 \in H_*$ and a different symbol, $\mu$,
for a partition indexed by $\eta \ne 1$ in $H_*$.

\vskip.2cm

\emph{First step: reminder on the conjugacy classes of $W$.}
The set of conjugacy classes of $W$ is in bijection with the set of $H_*$-decorated partitions,
as we now describe, much as in \cite{Macd--95}.
Here, $H_*$ denotes the set of conjugacy classes of $H$;
we write $1 \in H_*$ rather than $\{1\} \in H_*$ for the class $\{1\} \subset H$.
\par

Let $w = (\varphi, f) \in H^{(X)} \rtimes_X \Sy (X)$.
We proceed to associate a $H_*$-indexed family of partitions
\begin{equation}
\tag{$\dagger$}
\label{eqno*}
\Big( \lambda^{(1)}, \big( \mu^{(\eta)} \big)_{\eta \in H_* \smallsetminus 1} \Big)
\end{equation}
to $w$.
\par

Let $X^{(w)}$ be the finite subset of $X$ that is the union of the supports of $\varphi$ and $f$.
Denote by $c_1, \hdots, c_k$ the disjoint cycles of which $f$ is the product.
Here, we include a cycle of length $1$ for each point $x \in X$ such that
$x \in \su (\varphi)$ and $x \notin \su (f)$, so that we have a disjoint union
$X^{(w)} = \bigsqcup_{1 \le i \le k} \su (c_i)$.
For $i \in \{1, \hdots, k\}$, 
there are points $x^i_j$ in $X^{(w)}$, with $1 \le j \le \nu_i := \text{length}(c_i)$,  such that
$c_i = (x^{(i)}_1, x^{(i)}_2, \hdots, x^{(i)}_{\nu_i})$.
Define $\eta^w_*(c_i) \in H_*$
to be the conjugacy class of the product
$\varphi(x^{(i)}_{\nu_i}) \varphi(x^{(i)}_{\nu_i - 1}) \cdots \varphi( x^{(i)}_1 ) \in H$.
Observe that the product itself is not well-defined by $c_i$,
since the $x^{(i)}_j$ are well-defined up to cyclic permutation only,
but that its conjugacy class is well-defined.
Observe also that, if $\nu_i = 1$, then  $\eta^w_*(c_i) \ne 1$.
\par

For $\eta \in H_*$ and $\ell \ge 1$, let $m^{w,\eta}_\ell$ denote the number
of cycles $c$ in $\{c_1, \hdots, c_k\}$ that are of length $\ell$
and are such that $\eta^w_*(c) = \eta$.
Let $\mu^{w,\eta}$ be the partition with $m^{w,\eta}_\ell$ parts equal to $\ell$,
for all $\ell \ge 1$;
let $n^{w,\eta}$ be the sum of the parts of this partition, so that
$\mu^{w,\eta} \vdash n^{w,\eta}$.
We have
$\sum_{\eta \in H_*} n^{w,\eta} = \sum_{\eta \in H_*, \ell \ge 1} \ell m^{w,\eta}_\ell
= \vert X^{(w)} \vert$.
\par

We define the \textbf{pretype} of $w$ as the family 
$\left( \mu^{w,\eta} \right)_{\eta \in H_*}$.
By a routine argument, it can now be checked that
\begin{enumerate}[noitemsep,label=(\roman*)]
\item\label{iDEwreath}
for all $w  = (\varphi, f) \in W$ and $g \in \Sy (X)$, 
\\  the pretypes of $w$ and $(1,g)w(1,g^{-1})$ coincide;
\item\label{iiDEwreath}
for all $w  = (\varphi, f) \in W$ and $\psi \in H^{(X)}$, 
\\
the pretypes of $w$ and $(\psi,1)w(\psi^{-1},1)$ coincide;
\end{enumerate}
hence conjugate elements in $W$ have the same pretype.
Moreover:
\begin{enumerate}[noitemsep,label=(\roman*)]
\addtocounter{enumi}{2}
\item\label{iiiDEwreath}
two elements in $W$ that have the same pretype are conjugate.
\end{enumerate}
For details, we refer to \cite[Appendix I.B, No.~3]{Macd--95}.
\par

For $w = (\varphi, f) \in W$, observe that the partition $\mu^{w,1}$
does not have parts of size $1$.
With the same notation as above, denote by $\lambda^{w,1}$
the partiton with $m^{w, 1}_\ell$ parts equal to $\ell - 1$.
We define the \textbf{type} of $w$ as the family 
$\big( \lambda^{w,1}, \big( \mu^{w,\eta} \big)_{\eta \in H_* \smallsetminus 1} \big)$.
Then \ref{iDEwreath} to \ref{iiiDEwreath} hold with ``type'' instead of ``pretype''.
Moreover:
\begin{enumerate}[noitemsep,label=(\roman*)]
\addtocounter{enumi}{3}
\item\label{ivDEwreath}
every $H_*$-indexed family of partitions, i.e., 
$\big( \lambda^{(1)}, \big( \mu^{(\eta)} \big)_{\eta \in H_* \smallsetminus 1} \big)$
as in (\ref{eqno*}), 
is the type of one conjugacy class in $W$.
\end{enumerate}

\vskip.2cm

\emph{Second step: proof of the formula for $C_{W,S} (q)$.}
Consider a $H_*$-index family of partitions
$\big( \lambda^{(1)}, \big( \mu^{(\eta)} \big)_{\eta \in H_* \smallsetminus 1} \big)$
as in (\ref{eqno*})
and the corresponding conjugacy class in $W$.
Denote by $n^{(1)}, n^{(\eta)}$ the sum of the parts 
and by $k^{(1)}, k^{(\eta)}$ the number of the parts
of $\lambda^{(1)}, \mu^{(\eta)}$, respectively.
Choose a representative $w = (\varphi, f)$ of this class, with $f$ of the form
$f = \prod_{i=1}^k c_i = \prod_{i=1}^k (x^{(i)}_1, x^{(i)}_2, \hdots, x^{(i)}_{\mu_i} )$
and
$$
\aligned
&\varphi( x^{(i)}_j) \, = \, 1 \in H
\hskip.2cm \text{for all} \hskip.2cm
j \in \{1, \hdots, \mu_i \}
\hskip1.6cm \text{when} \hskip.2cm
\eta^w_*(c_i) = 1 
\\
&\varphi( x^{(i)}_j) \, = \, 
\left\{
\aligned
1 \hskip.5cm &\text{for all} \hskip.2cm j \in \{1, \hdots, \mu_i - 1\}
\\
h \ne 1 \hskip.2cm &\text{for} \hskip.2cm j = \mu_i
\endaligned
\right.
\hskip.5cm \text{when} \hskip.2cm \eta^w_*(c_i) \ne 1 .
\endaligned
$$
Recall that $\eta^w_*(c_i) \ne 1$ when $\mu_i = 1$,
and observe that 
$$
\aligned
k \, &= \,  k^{(1)} + \sum_{\eta \in H_*, \eta \ne 1} k^{(\eta)}
\\
\vert X^{(w)} \vert \, &=  \,
n^{(1)} + k^{(1)} + \sum_{\eta \in H_*, \eta \ne 1} n^{(\eta)} .
\endaligned
$$
\par

The contribution of $(\varphi_{\vert \su(c_i)}, c_i)$ to $\kappa_{W,S}(q)$ is
$\mu_i - 1$ if $\eta^w_*(c_i) = 1$, and $\mu_i$ if $\eta^w_*(c_i) \ne 1$.
Hence, the contribution of the type
$\big( \lambda^{(1)}, \big( \mu^{(\eta)} \big)_{\eta \in H_* \smallsetminus 1} \big)$
to $C_{W,S}(q)$ is $q^{n^{(1)}} \prod_{\eta \in H_*, \eta \ne 1} q^{ n^{(\eta)}}$.
It follows that 
$$
\aligned
C_{W,S}(q) \, &= \,
\Big( \sum_{n_1 = 0}^\infty p(n_1) q^{n_1} \Big)
\prod_{\eta \in H_*, \eta \ne 1}
\Big( \sum_{n_\eta = 0}^\infty p(n_\eta) q^{n_\eta} \Big)
\\
\, &= \,
\prod_{k=1}^\infty \frac{  1  }{ (1-q^k)^{ \vert H_* \vert }  } .
\endaligned
$$
This ends the proof of Proposition \ref{wreath}.
\end{proof}

\section{\textbf{A sample of growth polynomials and conjugacy growth polynomials 
for finite symmetric groups}}
% section4
\label{sectionLCsym}

The purpose of the present section is to compute for $\Sy (n)$
growth polynomials $L_{\Sy (n), S}(q)$
and conjugacy growth polynomials $C_{\Sy (n), S}(q)$,
with respect to a sample of generating sets $S$.
Our computations rely partly on 
Lemmas \ref{lemmafcl} of Section \ref{explicit...series}
and \ref{longS0equalfinite} of Section \ref{sectionproofs}.
\par

Before this, we review part of what is known 
in the broader and classical setting of finite Coxeter groups.
Though we will not recall precise statements,
this is strongly related to the topology of connected compact Lie groups
and their homogenous spaces.
\par

Let $(W,S)$ be a finite Coxeter system; set $l = \vert S \vert$.
Denote the corresponding Coxeter exponents
by $m_1, \hdots, m_l$; they are positive integers.
The growth polynomial is known to be 
\begin{equation}
\tag{$L_{W,S}$}
\label{LWS}
L_{W,S} (q) \, = \  \prod_{k=1}^l (1 + q + \cdots + q^{m_k} ) .
\end{equation}
This has received much attention;
see for example \cite{Solo--66} and \cite[exercises of $\S$ IV.1 and VI.4]{Bour--68}.
As Solomon observes, the computation of $L_{W,S}$ for the particular case
of the symmetric groups goes back to Rodrigues, in the first half of XIXth century
(with a different formulation).
Set $T = \bigcup_{w \in W} wSw^{-1}$.
The word length $\ell_{W,T}$ 
is sometimes called the \emph{reflection length} \cite{Cart--72}
and the corresponding growth polynomial is known to be 
\begin{equation}
\tag{$L_{W,T}$}
\label{LWT}
L_{W, T} (q) \, = \,  \prod_{k=1}^l (1 + m_k q) .
\end{equation}
For a group $W \subset \operatorname{GL}(V)$ generated by reflections,
define $\rho : W \longrightarrow \N$ by 
$$
\rho(w) = \dim (V) - \dim (\{v \in V \mid w(v) = v\})
$$
and set $R_W(q) = \sum_{w \in W} q^{\rho(w)}$.
Then $R_W(q) = \prod_{k=1}^l (1 + m_k q)$;
this is a special case of \cite[Number 5.3]{ShTo--54},
verified there by inspection,
and shown again more conceptually in \cite{Solo--63}.
For a finite \emph{Weyl group}, it is easy to show that $\rho(w) = \ell_{W,S}(w)$,
see e.g.\ \cite[Lemma 2]{Cart--72},
so that $L_{W,T} = R_W$, and (\ref{LWT}) holds;
this carries over to every finite \emph{Coxeter group},
see e.g.\ \cite{Lehr--87}.
Other avatars of these polynomials are discussed in \cite{BaGo--94}.
\par

We do not know whether 
the companion polynomials $C_{W,S}, C_{W,T}$ have already been given any attention.
\par

In the next proposition, 
we particularize $L_{W,S}(q)$ and $L_{W,T}(q)$ to $W = \Sy (n)$,
and we provide expressions for the corresponding conjugacy growth polynomials.
In the special case of finite symmetric groups,
there is an ad hoc proof for (\ref{LWT}) in Remark \ref{badwritingforC}
and one for (\ref{LWS}) in \cite{Harp--91}.

\begin{prop}
% 20   [Rodrigues, 1838]
\label{rodrigues}
Consider an integer $n \ge 1$, the symmetric group $\Sy (n)$
and its generating sets
$$
\aligned
S^{\text{Cox}}_n \, &= \,  \{ (1,2), (2,3) , \cdots , (n-1,n) \} ,
\\
T_n \, &= \,  \{ (i,j) \mid 1 \le i,j \le n,\hskip.1cm  i < j \} ,
\endaligned
$$
as in Proposition \ref{firstsample}.
The corresponding growth polynomial and conjugacy growth polynomial are
$$
\aligned
L_{\Sy (n), S^{\text{Cox}}_n}(q) \, &= \, \prod_{k=1}^{n-1} (1 + q + \cdots + q^k) ,
\\
L_{\Sy (n), T_n}(q) \, &= \, \prod_{k=1}^{n-1} (1 + k q) ,
\\
C_{\Sy (n), S^{\text{Cox}}_n}(q) \, &= \, C_{\Sy (n), T_n} (q) \, = \, 
 \sum_{k=0}^{n-1} p_{n-k}(n) q^k  \hskip.1cm ,
\endaligned
$$
where $p_{n-k}(n)$ is as in Appendix \ref{recallpk(n)}.
\end{prop}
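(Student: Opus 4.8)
The plan is to assemble the statement from pieces already in place. The two growth polynomials are the specializations to $W = \Sy(n)$ of the classical Coxeter identities (\ref{LWS}) and (\ref{LWT}): the Coxeter system $(\Sy(n), S^{\text{Cox}}_n)$ is of type $A_{n-1}$, with exponents $m_1, \dots, m_{n-1}$ equal to $1, 2, \dots, n-1$, and $T_n = \bigcup_{w \in \Sy(n)} w S^{\text{Cox}}_n w^{-1}$ is exactly the set of all transpositions; plugging these exponents into (\ref{LWS}) and (\ref{LWT}) gives the displayed products $\prod_{k=1}^{n-1}(1 + q + \cdots + q^k)$ and $\prod_{k=1}^{n-1}(1 + kq)$. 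For a self-contained treatment I would also record the elementary direct arguments (pointing to \cite{Harp--91} and Remark \ref{badwritingforC} for the fine details).

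For $L_{\Sy(n), T_n}$ the direct route is: by Lemma \ref{lemmafcl}\ref{iiDElemmafcl}, a permutation $\sigma$ moving $m$ points and decomposing into $j$ disjoint nontrivial cycles has $\ell_{\Sy(n), T_n}(\sigma) = m - j = n - c(\sigma)$, where $c(\sigma)$ counts \emph{all} cycles of $\sigma$, its $n - m$ fixed points included. Hence $L_{\Sy(n), T_n}(q) = \sum_{\sigma \in \Sy(n)} q^{n - c(\sigma)}$, and substituting $x = q^{-1}$ into the classical identity $\sum_{\sigma \in \Sy(n)} x^{c(\sigma)} = x(x+1) \cdots (x+n-1)$ (unsigned Stirling numbers of the first kind) and multiplying by $q^n$ yields $\prod_{k=0}^{n-1}(1 + kq) = \prod_{k=1}^{n-1}(1 + kq)$. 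For $L_{\Sy(n), S^{\text{Cox}}_n}$ one uses that $\ell_{\Sy(n), S^{\text{Cox}}_n}(\sigma)$ is the number of inversions of $\sigma$, so the polynomial is the inversion generating function of $\Sy(n)$, which factors as the $q$-factorial $\prod_{j=1}^{n-1}(1 + q + \cdots + q^j)$ --- for instance through the bijection $\Sy(n) \to \{0\} \times \{0,1\} \times \cdots \times \{0, 1, \dots, n-1\}$ sending a permutation to its Lehmer code, under which the number of inversions becomes the sum of the coordinates.

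For the conjugacy growth polynomials I would just invoke Proposition \ref{firstsample}\ref{iiDEfirstsample}: both $S^{\text{Cox}}_n$ and $T_n$ satisfy $S^{\text{Cox}}_n \subset S_n \subset T_n$, so that proposition applies to each and gives $C_{\Sy(n), S^{\text{Cox}}_n}(q) = C_{\Sy(n), T_n}(q) = \sum_{k=0}^{n-1} p_{n-k}(n) q^k$; in particular the two polynomials coincide. The coincidence alone is also one of the instances of (\ref{kappakappa2}) listed in Remark \ref{CompareSandT}.

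I do not expect a real obstacle: the proposition is essentially a repackaging of Proposition \ref{firstsample} together with the classical computations of $L_{W,S}$ and $L_{W,T}$ for Weyl groups. The only step needing a little care is the passage from Lemma \ref{lemmafcl}\ref{iiDElemmafcl} to the closed formula $\ell_{\Sy(n), T_n}(\sigma) = n - c(\sigma)$ and its combination with the Stirling-number identity; everything else is bookkeeping.
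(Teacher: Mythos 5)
Your proposal is correct and follows essentially the same route as the paper: the two growth polynomials come from specializing (\ref{LWS}) and (\ref{LWT}) to the exponents $1, 2, \dots, n-1$ of type $A_{n-1}$, and the conjugacy polynomial from Proposition \ref{firstsample}\ref{iiDEfirstsample} (the paper treats $T_n$ via Remark \ref{CompareSandT}, while you apply that proposition directly with $S_n = T_n$, which is equally valid since $S^{\text{Cox}}_n \subset T_n \subset T_n$). Your supplementary self-contained arguments --- the identity $\ell_{\Sy (n), T_n}(\sigma) = n - c(\sigma)$ combined with the Stirling-number identity, and the inversion/Lehmer-code count for $S^{\text{Cox}}_n$ --- are sound but optional; the paper's own ad hoc verification of $\prod_{k=1}^{n-1}(1+kq)$, in Remark \ref{badwritingforC}, instead uses derangement numbers and an induction on $n$.
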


\begin{proof}
The equalities involving the two products are particular cases
of (\ref{LWS}) and (\ref{LWT}), since the Coxeter exponents of
$(\Sy (n), S^{\text{Cox}}_n)$ are $1, 2, \hdots, n-1$.
The equality for $C_{\Sy (n), S^{\text{Cox}}_n}(q)$
is that of Proposition \ref{firstsample}\ref{iiDEfirstsample},
and $C_{\Sy (n), T_n} (q)$ is the same polynomial,
see Remark \ref{CompareSandT}.
\end{proof}

The polynomials $C_{\Sy (n), T_n}(q)$ for small $n$ 's are given by
$$
\aligned
C_{\Sy (2), T_2}(q) \, &= \,  1+q ,
\\
C_{\Sy (3), T_3}(q) \, &= \,   1 + q + q^2 ,
\\
C_{\Sy (4), T_4}(q) \, &= \,   1 + q +2 q^2 + q^3 , 
\\
C_{\Sy (5), T_5}(q) \, &= \,   1 + q + 2 q^2 + 2 q^3 + q^4 ,
\\
C_{\Sy (6), T_6}(q) \, &= \,   1 + q + 2 q^2 + 3 q^3 + 3q^4 + q^5 \hskip.1cm .
\endaligned
$$
(Compare with the polynomials written after Proposition \ref{growthpolSym(n)S0}.)

\begin{rem}
% 21
\label{badwritingforC}
(i)
The second polynomial of Proposition \ref{rodrigues}
can also be written
$$
L_{\Sy (n), T_n}(q) \, = \,
1 + \sum_{m=2}^n \binom{n}{m} \sum_{k=1}^{\lfloor m/2 \rfloor} d_k(m) q^{m-k}  ,
$$
where $d_k(m)$ is as in Appendix \ref{recalldk(n)}.

\vskip.2cm

(ii)
It is easy to check directly from (i) that we have also
$$
L_{\Sy (n), T_n}(q) \, = \,
\prod_{k=1}^{n-1} (1 + kq) ,
$$
as in Proposition \ref{rodrigues}.
\end{rem}

\begin{proof}
(i)
For $m \in \{0, 1, \hdots, n\}$, there are $\binom{n}{m}$ subsets of size $m$ in
$\{1, 2, \hdots, n\}$.
For each such subset, say $A$, and each $k \in \{0, 1, \hdots, n\}$,
there are $d_k(m)$ permutations in $\Sy (n)$ with support $A$
which are products of $k$ disjoint cycles,
and these elements have $T_n$-word length $m-k$, by Lemma \ref{lemmafcl}.
The growth polynomial of the situation is therefore
$$
\sum_{m=0}^n \binom{n}{m} \sum_{k=0}^n d_k(m) q^{m-k} \hskip.1cm .
$$
To end this computation, we observe that
the contribution of $m=0$ is $1$,  that of $m=1$ is $0$,
and $d_k(m) = 0$ for $2k > n$.

\vskip.2cm

(ii)
We proceed by induction on $n$.
There is nothing to check for $n=1$;
we assume now that $n \ge 2$, 
and that the statement holds for $n-1$.
\par

Consider an element $g \in \Sy (n)$ which is not in $\Sy (n-1)$.
There is a unique pair consisting of
$i \in \{1, \hdots, n-1\}$ and $h \in \Sy (n-1)$ 
such that $g = (i,n) h$.
This implies that
$$
C_{\Sy (n), T_n}(q) \, = \, 
C_{\Sy (n-1), T_{n-1}}(q) + (n-1)qC_{\Sy (n-1), T_{n-1}}(q) .
$$ 
Hence
$$
C_{\Sy (n), T_n}(q) \, = \, C_{\Sy (n-1), T_{n-1}}(q) \hskip.1cm \big(1 + (n-1)q \big)
\, = \, \prod_{i=1}^{n-1} (1 + kq)
$$
by the induction hypothesis.
\end{proof}

The final proposition of this section shows polynomials $L$ and $C$
for finite symmetric groups and a third generating set $S^0_n$,
essentially distinct from the generating sets $S^{\text{Cox}}_n$ and $T_n$
of Proposition \ref{rodrigues} for $n \ge 4$.
It is convenient to see $\Sy (n)$
as the symmetric group of $\{0, 1, \hdots, n-1\}$;
the generating set $S^0_n$ is that already considered in
Lemmas \ref{longS0smaller} and~\ref{longS0equalfinite}.

\begin{prop}
% 22
\label{growthpolSym(n)S0}
Consider an integer $n \ge 1$, the symmetric group $\Sy (n)$
and the generating set
$S^0_n = \{ (0, i) \mid 1 \le i \le n-1 \}$.
The corresponding growth polynomial and conjugacy growth polynomial are
$$
\aligned
L_{\Sy (n), S^0_n}(q) \, &= \, 
1 + \sum_{m=2}^{n-1} \binom{n-1}{m}
\sum_{k=1}^{\lfloor m/2 \rfloor} d_k(m) q^{m+k}
\\
&+ \,  
\sum_{m=2}^n \binom{n-1}{m-1}
\sum_{k=1}^{\lfloor m/2 \rfloor} d_k(m) q^{m+k-2}
\hskip.1cm ,
\\
C_{\Sy (n), S^0_n}(q) \, &= \, 1 \, + \, 
\sum_{k=1}^{\lfloor n/2 \rfloor} q^{2k-2} \sum_{j=k}^n p_k(j)q^{j}
\hskip.2cm \text{(as in Proposition \ref{propS0}).}
\endaligned
$$
\end{prop}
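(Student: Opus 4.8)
The plan is to treat the two formulas separately, the conjugacy growth polynomial being the easy half: the stated expression for $C_{\Sy (n), S^0_n}(q)$ is word for word the formula established in Proposition~\ref{propS0}(ii), so nothing new is required here and one simply refers back to that computation.

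For the growth polynomial $L_{\Sy (n), S^0_n}(q)$, the idea is to group the elements of $\Sy (n)$ according to the data that, by Lemma~\ref{longS0equalfinite}, governs their $S^0_n$-word length: the cardinality $m = \vert \su (g)\vert$ of the support, the number $k$ of cycles of length $\ge 2$ in the cycle decomposition of $g$, and whether or not $g$ fixes the point $0$. Recall from that lemma that $\ell_{\Sy (n), S^0_n}(g) = m+k$ when $g(0)=0$ and $\ell_{\Sy (n), S^0_n}(g) = m+k-2$ when $g(0)\ne 0$. So it suffices to count, for each pair $(m,k)$, the permutations with support of size $m$ that are products of $k$ disjoint cycles and that do (resp.\ do not) fix $0$, and to weight them by $q^{m+k}$ (resp.\ $q^{m+k-2}$).

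First I would handle the elements $g$ with $g(0)=0$, i.e.\ $0\notin\su (g)$: such a $g$ is supported on a subset $A\subseteq\{1,\dots,n-1\}$, so if $\vert A\vert = m$ there are $\binom{n-1}{m}$ choices of $A$, and for each one the number of permutations with support exactly $A$ that are products of $k$ disjoint cycles is $d_k(m)$, as recalled in Appendix~\ref{recalldk(n)} (each cycle having length $\ge 2$ forcing $1\le k\le\lfloor m/2\rfloor$). Together with the identity this gives the contribution $1+\sum_{m=2}^{n-1}\binom{n-1}{m}\sum_{k=1}^{\lfloor m/2\rfloor}d_k(m)q^{m+k}$, the first line of the claimed formula. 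Next, the elements with $g(0)\ne 0$, i.e.\ $0\in\su (g)$: here $\su (g)=\{0\}\cup B$ with $B\subseteq\{1,\dots,n-1\}$, so $\vert\su (g)\vert = m$ forces $\vert B\vert = m-1$ and there are $\binom{n-1}{m-1}$ choices of support, with $d_k(m)$ permutations for a prescribed support of size $m$ and $2\le m\le n$; these contribute $q^{m+k-2}$, giving the second line $\sum_{m=2}^{n}\binom{n-1}{m-1}\sum_{k=1}^{\lfloor m/2\rfloor}d_k(m)q^{m+k-2}$. Summing the two cases yields the asserted expression for $L_{\Sy (n), S^0_n}(q)$.

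The only delicate point is the bookkeeping: keeping straight the binomial coefficients $\binom{n-1}{m}$ versus $\binom{n-1}{m-1}$, the two ranges of $m$, and the placement of the single identity term (which falls into the first case, with $m=k=0$). No genuine obstacle arises, since Lemma~\ref{longS0equalfinite} already contains all the substantive content and the rest is elementary enumeration.
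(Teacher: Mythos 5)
Your proposal is correct and follows essentially the same route as the paper: the conjugacy growth polynomial is quoted from Proposition \ref{propS0}(ii), and the growth polynomial is obtained by splitting $\Sy(n)$ according to whether $0$ lies in the support, counting supports with the binomial coefficients $\binom{n-1}{m}$, respectively $\binom{n-1}{m-1}$, weighting each support by $d_k(m)$, and applying the length formula of Lemma \ref{longS0equalfinite}. Nothing further is needed.
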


For example: 
$$
\aligned
L_{\Sy (4), S^0_4} (q) \, &= \, 1 + 3q + 6q^2 + 9q^3 + 5q^4 ,
\\
L_{\Sy (5), S^0_5} (q) \, &= \, 1 + 4q + 12 q^2 + 30q^3 + 44q^4 + 26 q^5 + 3q^6 ,
\\
L_{\Sy (6), S^0_6} (q) \, &= \, 
1 + 5q + 20q^2 + 70q^3 + 170 q^4 + 250 q^5 + 169q^4 + 35q^7 ,
\endaligned
$$
and
$$
\aligned
C_{\Sy (4), S^0_4}(q) \, &= \,   1 + q + q^2 + q^3 + q^4 , 
\\
C_{\Sy (5), S^0_5}(q) \, &= \,   1 + q + q^2 + q^3 + 2q^4 + q^5 ,
\\
C_{\Sy (6), S^0_6}(q) \, &= \,   1 + q + q^2 + q^3 + 2q^4 + 2q^5 + 2q^6 + q^7 \hskip.1cm .
\endaligned
$$
(Compare with the polynomials written after Proposition \ref{rodrigues}.)

\begin{proof}
Let us deal with the polynomial $L$.
Consider first elements $g \in \Sy (n)$ with $0 \notin \su (g)$.
For each $m \in \{0, 1, \hdots, n-1\}$,
there are $\binom{n-1}{m}$ subsets of size $m$ in $\{1, 2, \hdots, n-1\}$.
For each such subset, say $A$, and each $k \in \{0, 1, 2, \hdots, m\}$,
there are $d_k(m)$ elements with support $A$
which are products of $k$ cycles, and these elements
have $S^0_n$-word length $m+k$, by Lemma \ref{longS0equalfinite}.
The contribution to the growth polynomial
of elements with $0 \notin \su (g)$ is therefore
\begin{equation}
\tag{$0 \notin \su $}
\label{eq1DEgrowthpolSym(n)S0}
\sum_{m=0}^{n-1} \binom{n-1}{m} \sum_{k=0}^m d_k(m) q^{m+k}  \hskip.1cm .
\end{equation}
The contribution of $m=0$ is $1$ and that of $m=1$ is zero;
for $m \ge 2$, the contributions of terms with $k=0$ or $k>m/2$ is also zero.
\par

Consider now elements $g \in \Sy (n)$ with $0 \in \su (g)$.
For each $m \in \{1, 2, \hdots, n\}$,
there are $\binom{n-1}{m-1}$ subsets of size $m$ in $\{0, 1, \hdots, n-1\}$
containing $0$.
For each such subset, say $B$, and each $k \in \{1, 2, \hdots, m\}$,
there are $d_k(m)$ elements with support $B$
which are products of $k$ cycles, and these elements
have $S^0_n$-word length $m+k-2$.
The contribution of these elements is therefore
\begin{equation}
\tag{$0 \in \su $}
\label{eq2DEgrowthpolSym(n)S0}
\sum_{m=1}^{n} \binom{n-1}{m-1} \sum_{k=1}^m d_k(m) q^{m+k-2}  \hskip.1cm .
\end{equation}
As above, the contributions of terms with $m=1$ or $k>m/2$ vanish.
\par
The formula for $L_{\Sy (n), S^0_n}(q)$ follows.
That for $C_{\Sy (n), S^0_n}(q)$ is
a repetition of part of Proposition \ref{propS0}.
\end{proof}

\section{\textbf{Alternating groups}}
% section5
\label{sectionalt}

For a non-empty set $X$, we denote by $\Alt (X)$ the \textbf{finitary alternating group} of $X$,
i.e.\ the subgroup of $\Sy (X)$ of permutations of even signature.
Set
$$
\aligned
T^A_X \, &= \, \big\{ (x,y,z) \in \Alt (X) \mid x,y, z \in X \hskip.2cm \text{are distinct} \big\} ,
\\
U^A_X \, &= \, \big\{ (x,y)(z,u) \in \Alt (X) \mid x,y,z,u \in X \hskip.2cm \text{are distinct} \big\} .
\endaligned
$$
Recall from the introduction that, when $X = \N$, we have defined
$$
S^A_\N \, = \, \{ (i, i+1, i+2) \in \Alt (\N) \mid i \in \N \} ,
$$
and we consider also
$$
R^A_\N \, = \, \big\{ (1, i, i+1) \in \Alt (\N) \mid i \ge 2 \big\} .
$$
When $X = \{1, \hdots n\}$ for some $n \ge 3$, we write
$$
\aligned
\Alt (n) \, &= \, \Alt ( \{1, 2, \hdots, n\} ) ,
\\
S^A_n \, &= \,  \{ (i, i+1, i+2) \in \Alt (n) \mid 1 \le i \le n-2 \} ,
\\
R^A_n \, &= \,  \{ (1, i, i+1) \in \Alt (n) \mid 2 \le i \le n-1 \} .
\endaligned
$$
When $X$ is either $\N$ or $\{1, \hdots, n\}$ for some $n \ge 1$,
we write $S^A_X$ to denote the relevant set, either $S^A_\N$ or $S^A_n$,
and similarly for $R^A_X$.
\par

The following lemma is well-known,
even if we did not find a convenient reference.

\begin{lem}
% 23
\label{sgenalt}
With the notation above:
\par
for all $n \ge 3$, the sets $S^A_n$ and $R^A_n$ both generate $\Alt (n)$;
\par
the sets $S^A_\N$ and $R^A_\N$ both generate $\Alt (\N)$;
\par
and the set $T^A_X$ generates $\Alt (X)$.
\end{lem}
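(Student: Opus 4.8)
The plan is to reduce all three statements to the finite case and there run an easy induction on $n$. The reduction is the standard finitary-support observation: every $g \in \Alt(X)$ (whether $X$ is finite or infinite) has finite support, so $g$ lies in the copy of $\Alt(Y)$ sitting inside $\Alt(X)$ for any finite $Y \supseteq \su(g)$ with $|Y| \ge 3$; hence it is enough to show that each proposed generating set, intersected with a suitable finite $\Alt(Y)$ and after identifying $Y$ with $\{1,\dots,|Y|\}$, already generates that $\Alt(Y)$. In particular, since $T^A_X \cap \Alt(Y)$ contains the consecutive $3$-cycles forming a copy of $S^A_{|Y|}$, all three claims follow once we know that $S^A_n$ and $R^A_n$ generate $\Alt(n)$ for every $n \ge 3$; for $\Alt(\N)$ one then only has to add the remark that these sets generate the \emph{finitary} alternating group, which is again the finite-support observation.

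For the finite statements I would argue by induction on $n$ using the elementary criterion: \emph{if $H \le \Alt(n)$ contains the point stabilizer $\mathrm{Stab}_{\Alt(n)}(n) = \Alt(\{1,\dots,n-1\})$ together with one element that moves $n$, then $H = \Alt(n)$.} To see this, note that $\Alt(\{1,\dots,n-1\})$ is transitive on $\{1,\dots,n-1\}$ (classical for $n-1 \ge 3$), so the $H$-orbit of $n$ is all of $\{1,\dots,n\}$; moreover $\mathrm{Stab}_H(n) = H \cap \Alt(\{1,\dots,n-1\}) = \Alt(\{1,\dots,n-1\})$ because $H \le \Alt(n)$; the orbit–stabilizer formula then gives $|H| = n \cdot |\Alt(n-1)| = |\Alt(n)|$.

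With the criterion in hand the induction would be immediate. The base case $n=3$ is trivial, since $S^A_3 = R^A_3 = \{(1,2,3)\}$ generates the cyclic group $\Alt(3)$. For $n \ge 4$ one has, under the standard embedding $\Alt(n-1) \hookrightarrow \Alt(n)$ as the stabilizer of $n$, the inclusions $S^A_{n-1} \subset S^A_n$ and $R^A_{n-1} \subset R^A_n$, with $S^A_n \setminus S^A_{n-1} = \{(n-2,n-1,n)\}$ and $R^A_n \setminus R^A_{n-1} = \{(1,n-1,n)\}$. By the inductive hypothesis $\langle S^A_{n-1}\rangle = \langle R^A_{n-1}\rangle = \Alt(\{1,\dots,n-1\})$, so $\langle S^A_n\rangle$ and $\langle R^A_n\rangle$ each contain the point stabilizer of $n$, and each contains a $3$-cycle moving $n$; the criterion then yields $\langle S^A_n\rangle = \langle R^A_n\rangle = \Alt(n)$, finishing the induction.

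I do not expect a genuine obstacle here — the statement is, as the authors say, well known. The only points needing a little care are organisational: setting up the bookkeeping so that $S^A_{n-1}$ (resp. $R^A_{n-1}$) literally becomes a subset of $S^A_n$ (resp. $R^A_n$) under the embedding $\Alt(n-1)\hookrightarrow\Alt(n)$ — this is exactly what makes the induction go through with no computation at all — and, on the infinite side, being explicit that the finite-support reduction produces generators of $\Alt(X)$ itself rather than of some larger permutation group.
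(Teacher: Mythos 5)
Your proof is correct and follows essentially the same route as the paper: an induction on $n$ in which the newly added $3$-cycle together with the inductively generated copy of $\Alt (n-1)$ gives transitivity on $\{1, \hdots, n\}$, and the orbit--stabilizer count forces the subgroup to be all of $\Alt (n)$, after which the claims for $S^A_\N$, $R^A_\N$ and $T^A_X$ follow by the finite-support reduction. The only (harmless) divergence is the treatment of $R^A_n$: the paper deduces it from the $S^A_n$ case via the identity $(1,i+1,i)(1,i+2,i+1)(1,i,i+1) = (i,i+1,i+2)$, whereas you run the same induction directly on $R^A_n$, using that $R^A_{n-1} \subset R^A_n$ and that $(1,n-1,n)$ moves $n$.
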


\begin{proof}
Let $H_n$ denote the subgroup of $\Alt (n)$ generated by $S^A_n$;
we claim that $H_n = \Alt (n)$. 
The case of $n=3$ is obvious; we proceed by induction on $n$,
assuming that $n \ge 4$ and that the claim holds for $n-1$.
\par

The group $H_n$ acts transitively on $\{1, \hdots, n\}$,
because it contains the $3$-cycle $(n-2, n-1, n)$ as well as $H_{n-1} = \Alt (n-1)$.
Hence the order of $H_n$ is $n$ times the index of
the isotropy group $\{h \in H_n \mid h(n) = n\}$,
that is $\vert H_n \vert = n \frac{1}{2} (n-1)! = \frac{1}{2} n!$.
It follows that $H_n = \Alt (n)$.
\par

As a consequence, $R^A_n$ also generates $\Alt (n)$, 
since 
\hfill\par\noindent
$(1, i+1, i)(1, i+2, i+1)(1, i, i+1) = (i, i+1, i+2)$ 
for all $i \in \{2, \hdots, n-1\}$.
\par

The claims for $S^A_\N$, $R^A_\N$ and $T^A_X$ follow.
\end{proof}

Note that $T^A_X \cup U^A_X$ is the set of products of two distinct elements
of the generating set $S_X$ of $\Sy (X)$. 
It follows that
$$
\kappa_{\Alt (X), T^A_X \cup U^A_X}(g)
\, = \,
\frac{1}{2} \kappa_{\Sy (X), S_X}(g)
\hskip.5cm \text{for all} \hskip.2cm g \in \Alt (X) .
$$
Since, for $X$ infinite, two elements of $\Alt (X)$ are conjugate in $\Alt (X)$
if and only if they are conjugate in $\Sy (X)$, 
we obtain the following straightforward consequence of Proposition \ref{sharpening}:

\begin{prop}
% 24
Let $X$ be an infinite set, and $T^A_X, U^A_X$ as above. Then
$$
\aligned
C_{\Alt (X), T^A_X \cup U^A_X}(q) \, &= \, \sum_{m=0}^\infty p(2m) q^m 
\\
\, &= \, 
1 + 2q + 5q^2 + 11q^3 + 22q^4 + 42 q^5 + 77 q^6 
\\
\, &+ \, 135 q^7 + 231 q^8 + 385 q^9 + 627 q^{10}
+ 1002 q^{11} + 1575 q^{12} + \cdots ,
\endaligned
$$
where the numerical coefficients are those of the series \cite[A058696]{OEIS}.
\end{prop}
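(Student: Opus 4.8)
The plan is to read the series off from the two facts recalled immediately above the statement, together with Observation \ref{partconjclasses} and Proposition \ref{sharpening}. Write $S_X = T_X$ for the set of all transpositions of $X$; its transposition graph is the complete graph on $X$, hence is connected and contains every finite forest, so $T_X$ is partition-complete (Definition \ref{PCall}). The displayed identity gives $\kappa_{\Alt(X), T^A_X \cup U^A_X}(g) = \tfrac12 \kappa_{\Sy(X), T_X}(g)$ for every $g \in \Alt(X)$, and for $X$ infinite two elements of $\Alt(X)$ are conjugate in $\Alt(X)$ if and only if they are conjugate in $\Sy(X)$. Consequently $C_{\Alt(X), T^A_X \cup U^A_X}(q) = \sum_{g \in \text{Conj}(\Alt(X))} q^{\kappa_{\Alt(X), T^A_X \cup U^A_X}(g)}$ is a sum over the $\Sy(X)$-conjugacy classes of \emph{even} permutations, each such class contributing $q^{L/2}$ with $L = \kappa_{\Sy(X), T_X}(g)$; and $L = \vert \su(g)\vert - k_g$ by Lemma \ref{reformlemma3}, which is even precisely because the signature of $g$ equals $(-1)^L$.

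Next I would describe these classes via Observation \ref{partconjclasses}: the $\Sy(X)$-conjugacy classes (for $X$ infinite) are indexed by all partitions $\lambda = (\lambda_1, \dots, \lambda_k) \vdash L$ with arbitrary $L \ge 0$, the class of $\lambda$ being that of a product of disjoint cycles of lengths $\lambda_i + 1$, whose signature is $(-1)^{\lambda_1 + \cdots + \lambda_k} = (-1)^L$. So the even classes are exactly those with $L$ even, there are $p(L)$ partitions of weight $L$, and each contributes $q^{L/2}$. Summing over even $L$ and setting $L = 2m$ yields $C_{\Alt(X), T^A_X \cup U^A_X}(q) = \sum_{m=0}^\infty p(2m) q^m$; equivalently this is the even part of the series $C_{\Sy(X), T_X}(q) = \sum_{L \ge 0} p(L) q^L = \prod_{j \ge 1}(1-q^j)^{-1}$ of Proposition \ref{sharpening}(a), with $q^{2m}$ rewritten as $q^m$. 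The explicit coefficients $1, 2, 5, 11, 22, 42, 77, \dots$ are then just $p(0), p(2), p(4), \dots$.

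Since the whole argument leans on the half-length identity, I would include its short justification rather than merely cite it, as this is the only computational point and the main thing to be careful about. Upper bound: given $g \in \Alt(X)$, choose a conjugate $h$ with $\ell_{\Sy(X), T_X}(h) = \kappa_{\Sy(X), T_X}(g) =: L$; then $L$ is even, and a shortest transposition word $h = s_1 \cdots s_L$ has $s_i \neq s_{i+1}$ for all $i$ (otherwise $s_i s_{i+1} = \operatorname{id}$ gives a shorter word), so the consecutive products $s_1 s_2, s_3 s_4, \dots, s_{L-1} s_L$ all lie in $T^A_X \cup U^A_X$ and express $h$ as a word of length $L/2$ over that set, giving $\kappa_{\Alt(X), T^A_X \cup U^A_X}(g) \le L/2$. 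Lower bound: any expression of some $\Alt(X)$-conjugate of $g$ as a product of $r$ elements of $T^A_X \cup U^A_X$ unpacks into a product of $2r$ transpositions, so $L = \kappa_{\Sy(X), T_X}(g) \le 2r$, using once more that $\Alt(X)$- and $\Sy(X)$-conjugacy agree. Beyond this bookkeeping between word length and conjugacy length, the statement is immediate from Observation \ref{partconjclasses} and Proposition \ref{sharpening}, which is why it is billed as a straightforward consequence.
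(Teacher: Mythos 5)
Your proposal is correct and follows essentially the same route as the paper: the paper derives the formula from the half-length identity $\kappa_{\Alt(X), T^A_X \cup U^A_X} = \tfrac12 \kappa_{\Sy(X), T_X}$ together with the coincidence of $\Alt(X)$- and $\Sy(X)$-conjugacy for $X$ infinite, then reads off the even part of the series of Proposition \ref{sharpening}. The only difference is that you spell out the justification of the half-length identity (pairing distinct consecutive transpositions in a minimal word, and unpacking in the other direction), which the paper asserts without proof; this is a correct and welcome filling-in of detail, not a different argument.
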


Let $X$ be a set containing at least $5$ elements.
It is easy to check that $U^A_X$ generates $\Alt (X)$,
and it can be shown that 
$$
\kappa_{\Alt (X), U^A_X}(g) \, = \,
\left\{ \aligned
\kappa_{\Alt (X), T^A_X \cup U^A_X}(g) \hskip.5cm &\text{if} \hskip.2cm
   g = \operatorname{id} \hskip.2cm \text{or} \hskip.2cm \vert \su (g) \vert > 3 ,
\\
2 \hskip2cm &\text{if} \hskip.2cm
   g \hskip.2cm \text{is a $3$-cycle}.
\endaligned \right.
$$
It follows that

\begin{prop}
% 25
Let $X$ be an infinite set, and $U^A_X$ as above. Then
$$
C_{\Alt (X), U^A_X}(q) \, = \, q^2 - q + \sum_{m=0}^\infty p(2m) q^m .
$$
\end{prop}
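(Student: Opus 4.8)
The plan is to read this formula off from the Proposition computing $C_{\Alt (X), T^A_X \cup U^A_X}(q)$ together with the displayed comparison of the conjugacy length functions $\kappa_{\Alt (X), U^A_X}$ and $\kappa_{\Alt (X), T^A_X \cup U^A_X}$. Since $X$ is infinite, two elements of $\Alt (X)$ are conjugate in $\Alt (X)$ if and only if they are conjugate in $\Sy (X)$ (as recalled just before the statement), so both conjugacy growth series are sums $\sum_{\mathcal C} q^{\kappa(\mathcal C)}$ over the \emph{same} index set, namely the set of conjugacy classes $\mathcal C$ of $\Alt (X)$. It therefore suffices to compare the two series class by class.

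By the displayed equality, $\kappa_{\Alt (X), U^A_X}(\mathcal C) = \kappa_{\Alt (X), T^A_X \cup U^A_X}(\mathcal C)$ for every class $\mathcal C$ other than the class of $3$-cycles: indeed the only even permutations $g$ with $\vert \su (g) \vert \le 3$ are the identity and the $3$-cycles, and on the trivial class, as well as on all classes with $\vert \su (g) \vert > 3$, the two length functions agree. For the class of $3$-cycles one has $\kappa_{\Alt (X), T^A_X \cup U^A_X} = \tfrac12 \kappa_{\Sy (X), S_X} = 1$, because a $3$-cycle is a single cycle of length $3$, so by Lemma \ref{reformlemma3} its $S_X$-conjugacy length in $\Sy (X)$ equals $L = 2$; whereas $\kappa_{\Alt (X), U^A_X} = 2$ on that class. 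Hence passing from $C_{\Alt (X), T^A_X \cup U^A_X}(q)$ to $C_{\Alt (X), U^A_X}(q)$ removes the single summand $q^1$ contributed by the $3$-cycle class and replaces it by $q^2$, giving
$$
C_{\Alt (X), U^A_X}(q) \, = \, C_{\Alt (X), T^A_X \cup U^A_X}(q) - q + q^2 \, = \, q^2 - q + \sum_{m=0}^\infty p(2m) q^m ,
$$
where the last equality is the Proposition recalled above.

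The argument is short, and the only points requiring care are pure bookkeeping: checking that the $3$-cycle class is the \emph{unique} conjugacy class of $\Alt (X)$ on which the two length functions differ, and pinning down that $\kappa_{\Alt (X), T^A_X \cup U^A_X}$ takes the value $1$ (and not $0$ or $2$) there, so that the correction term is exactly $q^2 - q$. I expect no genuine obstacle, since all the substantive work is already contained in the preceding proposition and in the (quoted) determination of $\kappa_{\Alt (X), U^A_X}$ on $3$-cycles.
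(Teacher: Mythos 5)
Your proposal is correct and is essentially the paper's own argument: the paper deduces the formula directly ("It follows that") from the displayed comparison of $\kappa_{\Alt (X), U^A_X}$ with $\kappa_{\Alt (X), T^A_X \cup U^A_X}$ together with the preceding proposition, exactly as you do by correcting the single $3$-cycle class from $q$ to $q^2$.
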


\emph{Remark.}
For the generating set
$$
V^A_\N \, := \, \{(i, i+1, i+2) \in \Alt (\N) \mid i \ge 0 \} \cup \{ (i,i+1)(i+2,i+3) \in \Alt (\N) \mid i \ge 0 \} ,
$$
it can be shown that
$$
C_{\Alt (\N), V^A_\N}(q) \, = \, C_{\Alt (\N), T^A_\N \cup U^A_\N}(q) .
$$

\vskip.2cm

Our next target is to identify $C_{\Alt (X), T^A_X}(q)$.

\begin{lem}
% 26
\label{susinsupg}
Let $g \in \Alt (X)$ and $g = t_1 \cdots t_L$
a writing of $g$ as a word of minimal length $L = \ell_{\Alt (X), T^A_X}(g)$
in the generators of $T^A_X$.
\par
Then $t_j \ne t_i^{\pm 1}$, 
equivalently $\su (t_i) \ne \su (t_j)$, for all $i,j \in \{1, \hdots, L\}$ with $i \ne j$.
\end{lem}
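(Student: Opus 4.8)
The plan is to argue by contradiction, using minimality of the word. First I would record the elementary fact behind the word ``equivalently'' in the statement: a $3$-element set $\{x,y,z\}$ is the support of exactly two $3$-cycles, namely $(x,y,z)$ and $(x,z,y)=(x,y,z)^{-1}$, and these are inverse to one another. Hence, for $i\ne j$, $\su(t_i)=\su(t_j)$ if and only if $t_j\in\{t_i,t_i^{-1}\}$, i.e.\ $t_j=t_i^{\pm1}$. So the two formulations coincide, and it suffices to show that a word $g=t_1\cdots t_L$ of minimal length $L=\ell_{\Alt(X),T^A_X}(g)$ cannot have indices $i<j$ with $\su(t_i)=\su(t_j)$.

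So suppose such $i<j$ exist; put $\tau:=t_i$, so that $t_j\in\{\tau,\tau^{-1}\}$ and $\tau^3=\operatorname{id}$. Regroup the word as
$$
g \, = \, (t_1\cdots t_{i-1})\,(t_i\, u\, t_j)\,(t_{j+1}\cdots t_L),
\qquad u:=t_{i+1}\cdots t_{j-1}
$$
(with $u=\operatorname{id}$ when $j=i+1$). The key observation is that conjugation by the fixed element $\tau$ sends a product of $N$ elements of $T^A_X$ to a product of $N$ elements of $T^A_X$ (a conjugate of a $3$-cycle is a $3$-cycle). In particular $\tau u\tau^{-1}=(\tau t_{i+1}\tau^{-1})\cdots(\tau t_{j-1}\tau^{-1})$ is a product of $j-1-i$ generators.

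Now split into two cases. If $t_j=\tau^{-1}$, then $t_i u t_j=\tau u\tau^{-1}$, so $g$ is exhibited as a product of $(i-1)+(j-1-i)+(L-j)=L-2$ elements of $T^A_X$, contradicting minimality. If $t_j=\tau$, use $\tau=\tau^{-2}$ to rewrite $t_i u t_j=\tau u\tau=(\tau u\tau^{-1})\,\tau^{-1}$, a product of $(j-1-i)+1=j-i$ generators (recall $\tau^{-1}\in T^A_X$); hence $g$ becomes a product of $(i-1)+(j-i)+(L-j)=L-1$ elements of $T^A_X$, again contradicting $L=\ell_{\Alt(X),T^A_X}(g)$. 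Either way we reach a contradiction, so no such pair $i<j$ exists, which is exactly the assertion.

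I do not expect a real obstacle here; the argument is short. The only point needing a little care is the case $t_j=t_i$, where the naive grouping $\tau u\tau$ does not shorten the word and one must ``spend'' the relation $\tau^3=\operatorname{id}$ to gain a single letter instead of two. I would also check the boundary case $j=i+1$ (so $u=\operatorname{id}$) explicitly, although the same two displays go through verbatim.
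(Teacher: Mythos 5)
Your proof is correct and is essentially the paper's argument: in both, a repetition $t_j=t_i^{-1}$ is removed by conjugating the intermediate letters by $t_i$ (shortening by $2$), while a repetition $t_j=t_i$ is handled via $t_i^2=t_i^{-1}$, leaving one extra letter $t_i^{-1}$ and shortening by $1$, contradicting minimality. The only cosmetic difference is that the paper phrases the second case as "replace $t_j$ by $t_i^{-1}t_i^{-1}$ and apply the first procedure," which yields exactly your factorization $(\tau u\tau^{-1})\tau^{-1}$.
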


\begin{proof}
Let $g = u_1 \cdots u_M$ be a writing of $g$ as a word in the generators of $T^A_X$.
\par
Suppose first that there exist $j,k \in \{1, \hdots, M\}$ with $j < k$ 
such that $u_k = u_j^{-1}$.
If $k=j+1$, then deleting $u_ju_k$ 
produces a new $T^A_X$-word of length $M-2$ representing $g$;
if $k \ge j+2$, then $g$ can be written as 
$$
u_1 \cdots u_{j-1} 
\left(u_j u_{j+1} u_j^{-1}\right) \cdots \left( u_j u_{k-1} u_j^{-1}\right)
u_{k+1} \cdots u_m ,
$$
i.e.\ $g$ can again be written as a $T^A_X$-word of length $M-2$ representing $g$.
\par

Suppose now that there exist $j,k \in \{1, \hdots, M\}$ with $j<k$ such that $u_k = u_j$.
If $k=j+1$, then replacing $u_ju_k$ by $u_j^{-1}$ 
produces a new $T^A_X$-word of length $M-1$ representing $g$;
if $k \ge j+2$, then $g$ can be written as
$$
u_1 \cdots u_{j-1} u_j u_{j+1} \cdots u_{k-1} u_j^{-1} u_j^{-1} u_{k+1} \cdots u_M
$$
and the previous procedure provides 
a $T^A_X$-word representing $g$ of length $M-1$.
\par
The lemma follows.
\end{proof}

For $g \in \Alt (X)$ a product of disjoint cycles,
we denote by $k'_g$ the number of cycles of odd lengths $\ge 3$
and by $2k''_g$ the number of cycles of even lengths $\ge 2$.
Note that $k_g = k'_g + 2k''_g$ for $k_g$ as in Lemma \ref{lemmalimit}.

\begin{lem}
% 27
\label{conjlengthalt}
Let $X$ be a set and $S$ a generating set of $\Alt (X)$.
Let $g  \in \Alt (X)$ be a product of disjoint cycles, with $k'_g, k''_g$ as above.
Suppose either that $S = T^A_X$ or that $X$ is one of $\N$, $\{1, \hdots, n\}$
for some $n \ge 1$, and that $S^A_X \subset S \subset T^A_X$.
We have
$$
\ell_{\Alt (X), T^A_X} (g) \, = \, 
\kappa_{\Alt (X), S}(g) \, = \, 
\frac{1}{2} \big( \vert \su (g) \vert - k'_g \big) .
$$
\end{lem}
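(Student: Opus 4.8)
The plan is to show that the three quantities all equal $f(g):=\tfrac12\big(\vert\su(g)\vert-k'_g\big)$. First I would note that $f(g)$ is the sum, over the disjoint cycles of $g$, of $\lfloor\nu/2\rfloor$, where $\nu$ is the length of the cycle (so an odd cycle of length $2m+1$ contributes $m$, an even cycle of length $2p$ contributes $p$, and a fixed point contributes $0$); in particular $f(g)$ is a non-negative integer depending only on the cycle type of $g$, hence a conjugacy invariant, and since $g$ is even one has $k''_g\in\N$ and $\vert\su(g)\vert-k'_g$ even. For the inequality $\ell_{\Alt(X),T^A_X}(g)\le f(g)$ I would write $g=c_1\cdots c_k$ and use the identities
$$
(x_1,\dots,x_{2m+1}) \, = \, (x_1,x_2,x_3)(x_3,x_4,x_5)\cdots(x_{2m-1},x_{2m},x_{2m+1}),
$$
which writes an odd cycle as a product of $m$ $3$-cycles supported in its support, and
$$
(x_1,\dots,x_{2p}) \, = \, (x_1,x_2,x_3)\cdots(x_{2p-3},x_{2p-2},x_{2p-1})\,(x_{2p-1},x_{2p})
$$
for an even cycle (and similarly on disjoint letters $y_1,\dots,y_{2q}$); the even cycles of $g$ occur in even number $2k''_g$, and for a pair of them the two leftover disjoint transpositions $(x_{2p-1},x_{2p})(y_{2q-1},y_{2q})$ are themselves a product of two $3$-cycles, so the pair is a product of $p+q$ $3$-cycles. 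Collecting these over all cycles expresses $g$ as $f(g)$ elements of $T^A_X$.

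For the reverse inequality, the crux is that $f$ changes by at most $1$ under multiplication by a $3$-cycle: $\vert f(ht)-f(h)\vert\le 1$ for all $h\in\Sy(X)$ and every $3$-cycle $t$. Writing $t=(a,b)(b,c)$, the passage from $h$ to $ht$ is two successive multiplications by transpositions, each of which merges two cycles of the current permutation or splits one; one checks directly that a split moves $f$ by $0$ or $-1$ and a merge by $0$ or $+1$, with the value $\pm1$ occurring exactly when both cycles involved have odd length, and a change of $\pm2$ is ruled out by parity (two successive merges both raising $f$ would have to join two odd cycles twice, but the first such merge produces a cycle of even length). Since $f(\operatorname{id})=0$, any writing $g=t_1\cdots t_L$ in $3$-cycles forces $f(g)\le L$, whence $\ell_{\Alt(X),T^A_X}(g)=f(g)$; and because $T^A_X$ is invariant under conjugation in $\Alt(X)$, Remark \ref{CompareSandT}(i) gives $\kappa_{\Alt(X),T^A_X}(g)=f(g)$ as well. (Alternatively, the lower bound can be obtained from Lemma \ref{susinsupg} and a sharpening of the multigraph argument in the proof of Lemma \ref{lemmafcl}.)

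Finally, for a general $S$ with $S^A_X\subseteq S\subseteq T^A_X$, conjugation-invariance of $T^A_X$ gives $T^A_X=\bigcup_{h\in\Alt(X)}hS^A_Xh^{-1}\subseteq\bigcup_{h}hSh^{-1}\subseteq T^A_X$, so $\bigcup_h hSh^{-1}=T^A_X$ and Remark \ref{CompareSandT}(ii) yields $\kappa_{\Alt(X),S}(g)\ge\kappa_{\Alt(X),T^A_X}(g)=f(g)$. For the opposite inequality it suffices, since $S^A_X\subseteq S$, to exhibit one conjugate of $g$ that is a product of $f(g)$ elements of $S^A_X\cup(S^A_X)^{-1}$. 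I would take $h$ on $\{1,\dots,\vert\su(g)\vert\}$ with the cycle type of $g$, its cycles filling consecutive blocks of integers with all odd cycles placed first and the even ones grouped in consecutive pairs, and write each block as $(a,a+1,\dots,a+\nu-1)=(a,a+1)(a+1,a+2)\cdots(a+\nu-2,a+\nu-1)$. Reading the resulting word of $\vert\su(g)\vert-k_g$ transpositions off two at a time turns each adjacent pair into a consecutive $3$-cycle $(r,r+1)(r+1,r+2)=(r,r+1,r+2)\in S^A_X$, except that each even cycle contributes an odd-length run, so the pairing spills across the junction inside each even pair, where the leftover pair $(b,b+1)(b+2,b+3)=(b,b+1,b+2)(b+1,b+2,b+3)$ uses two elements of $S^A_X$; the total is $\tfrac12(\vert\su(g)\vert-k_g)+k''_g=f(g)$ generators. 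When $X$ is finite one also needs $h$ to lie in the $\Alt(n)$-conjugacy class of $g$, which is automatic unless the cycle type of $g$ is all odd with distinct parts, and in that case inverting the last $3$-cycle of one odd block (which exists since $g\ne\operatorname{id}$) moves $h$ into the other class without changing its length. This gives $\kappa_{\Alt(X),S}(g)=f(g)$, finishing the proof. The step I expect to be the main obstacle is the Lipschitz bound with constant $1$ rather than $2$ — making essential use of the fact that a $3$-cycle is a product of two transpositions \emph{sharing a point} — together with the junction bookkeeping and the $\Alt(n)$-versus-$\Sy(n)$ conjugacy subtlety in the last step.
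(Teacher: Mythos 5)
Your proposal is correct, and its crux — the lower bound — goes by a genuinely different route than the paper. The paper proves $\ell_{\Alt(X),T^A_X}(g)\ge\tfrac12(\vert\su(g)\vert-k'_g)$ by a minimal-counterexample argument: it first shows (Lemma \ref{susinsupg}) that in a geodesic word the supports of the $3$-cycles are pairwise distinct, then proves that each support meets the union of the others in at least two points, rewrites each letter as two transpositions and concludes by a degree count in a multigraph ($2L$ edges, every vertex of degree $\ge 2$, at least $\vert\su(g)\vert$ vertices). You instead observe that $f(h)=\tfrac12(\vert\su(h)\vert-k'_h)=\sum_c\lfloor\nu(c)/2\rfloor$ is a class function that changes by at most $1$ under multiplication by a $3$-cycle: a transposition merges or splits cycles, a merge raises $f$ by $1$ only when two odd cycles are joined, and since the two transpositions composing a $3$-cycle share a point, the first $+1$ merge leaves that point in an even cycle and blocks a second $+1$ — so $f(g)\le L$ for any word of length $L$, which is exactly the needed inequality. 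This is more elementary and self-contained (no appeal to Lemma \ref{susinsupg} or the multigraph count). Your upper bound also differs mildly: the paper chains each pair of even cycles into one run of $3$-cycles sharing points across the two cycles, whereas you write each even cycle with one leftover transposition and absorb the two leftover disjoint transpositions into two $3$-cycles; your consecutive-block version with the junction identity $(b,b{+}1)(b{+}2,b{+}3)=(b,b{+}1,b{+}2)(b{+}1,b{+}2,b{+}3)$ gives a genuinely all-$S^A_X$ word, which is cleaner than the paper's parenthetical "consecutive integers" remark, and your handling of the $\Alt(n)$-versus-$\Sy(n)$ class-splitting for finite $X$ (fixing the class by inverting the last $3$-cycle of an odd block, i.e.\ conjugating by a transposition) addresses a point the paper passes over in silence. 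One small simplification: the identity $T^A_X=\bigcup_{h\in\Alt(X)}hS^A_Xh^{-1}$ fails for $\vert X\vert=3$ (where $\Alt(3)$ is abelian), but you never really need it — the inequality $\kappa_{\Alt(X),S}(g)\ge\kappa_{\Alt(X),T^A_X}(g)$ is immediate from the inclusion $S\subseteq T^A_X$.
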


In the proof below, we write $\ell$ for $\ell_{\Alt (X), T^A_X}$
and $\kappa$ for $\kappa_{\Alt (X), S}$.

\begin{proof}[Proof of the upper bounds
$\kappa (g), \ell (g)  \le \frac{1}{2} \big( \vert \su (g) \vert - k'_g \big)$]
We show the bound for $\ell (g)$,
and leave it to the reader to check that a minor modification
of the same argument shows the bound for $\kappa (g)$.
Whenever convenient,
we write $k', k''$ rather than $k'_g, k''_g$.
\par
Consider a cycle of odd length, say
$$
c_\alpha \, = \,  (x_1, x_2, \hdots, x_{2p+1}) 
$$
for $x_1, \hdots, x_{2p+1} \in X$.
We have
$$
c_\alpha \ = \, 
(x_1, x_2, x_3) (x_3, x_4, x_5) (x_5, x_6, x_7) \cdots  (x_{2p-1}, x_{2p}, x_{2p+1}) 
$$
and therefore $\ell (c_\alpha) \le p = \frac{1}{2} (\vert \su (c_\alpha) \vert - 1)$.
\par

Consider a pair of disjoint cycles of even lengths, say
$$
c_\beta c_\gamma \, = \,
(x_1, x_2, \hdots, x_{2r} )(y_1, y_2, \hdots, y_{2s} ) 
$$
for $x_1, \hdots, x_{2r}, y_1 \hdots, y_{2s} \in X$
(where we consider an appropriate conjugate of $g$
and $2r+2s$ \emph{consecutive} integers 
$y_1,y_2,\dots,y_{2s},x_1,x_2,\dots,x_{2r}$ 
for the case of $\kappa (g)$).
We have
$$
\aligned 
c_\beta c_\gamma \, = \,
&(y_1, y_2, y_3) (y_3, y_4, y_5)  \cdots (y_{2s-3}, y_{2s-2}, y_{2s-1}) (y_{2s-1}, y_{2s}, x_1)
\\
&(x_1, x_2, x_3) (x_3, x_4, x_5) \cdots (x_{2r-3}, x_{2r-2}, x_{2r-1}) (x_{2r-1}, x_{2r}, y_{2s}) 
\endaligned
$$
and therefore    
$\ell (c_\beta c_\gamma) \le r+s = 
\frac{1}{2}\left(  \vert \su(c_\beta) \vert + \vert  \su(c_\gamma) \vert \right)$.
\par

For $g = c_1 c_2 \cdots c_{k'} c_{k'+1} c_{k'+2} \cdots c_{k'+2k''}$,
where $c_1, \hdots, c_{k'+2k''}$ are disjoint cycles,
$c_\nu$ of odd length for $1 \le \nu \le k'$ 
and of even length for $k'+1 \le \nu \le k'+2k''$, 
it follows that
$$
\aligned
\ell (g) \, \le \, \sum_{\nu=1}^{k'+2k''} \ell (c_\nu) 
& \, \le \,
\frac{1}{2} \Big( \sum_{\alpha=1}^{k'} (\vert \su (c_\alpha) \vert - 1)
+ \sum_{\beta=k'+1}^{2k''} \vert \su (c_\beta) \vert  \Big) 
\\
& \, = \, 
\frac{1}{2} \big( \vert \su (g) \vert - k' \big) \hskip.1cm ,
\endaligned
$$
as was to be shown.
\end{proof}

\begin{proof}[Proof of the lower bounds
$\ell (g), \kappa (g) \ge \frac{1}{2} \big( \vert \su (g) \vert - k'_g \big)$]
For $g \ne \operatorname{id}$ in $\Alt (X)$ such that $\vert \su (g) \vert \le 3$,
we have obviously 
$1 = \ell (g) = \kappa (g) \ge \frac{1}{2} \big( \vert \su (g) \vert - k'_g \big) = \frac{1}{2}(3-1)$.
We consider from now on an element $g$ in $\Alt (X)$ with 
$\vert \su (g) \vert > 3$, and therefore with $\ell (g) > 1$ and $\kappa (g) > 1$.
As above, we continue and deal with $\ell (g)$ only.
\par

Suppose by contradiction that there exists $g \in \Alt (X)$ with
$\vert \su (g) \vert > 3$ and
\begin{equation}
\tag{$\flat$}
\label{flat}
\ell(g) \, < \,  \frac{1}{2} \big( \vert \su (g) \vert - k'_g \big) \hskip.1cm ;
\end{equation}
suppose moreover that $\ell (g)$ is minimal for the elements 
for which (\ref{flat}) holds.
We can write 
\begin{equation}
\tag{$\flat\flat$}
\label{flatflat}
g = t_1 \cdots t_L
\end{equation} 
for some $t_1, \hdots, t_L \in T^A_X$ with 
$
1 \, < \, L \, = \, \ell(g) \, < \, \frac{1}{2} \big( \vert \su (g) \vert - k'_g \big) \hskip.1cm .
$
By Lemma \ref{susinsupg}, we know that
the supports $\su (t_i)$ are pairwise distinct.
\par

For each $i \in \{1, \hdots, L\}$, let $x_i, y_i, z_i \in X$ be such that
$t_i = (x_i, y_i, z_i)$.
Set $Y_i = \su (t_i) = \{x_i, y_i, z_i\}$ and 
$Z_i = \bigcup_{1 \le j \le L, \hskip.1cm j \ne i} Y_j$.

\vskip.2cm

\emph{Claim:}
We have
\begin{equation}
\tag{$\sharp$}
\label{sharp}
\vert Y_i \cap Z_i \vert \, \ge \, 2 \hskip.2cm \text{for all} \hskip.2cm
i \in \{1, \hdots, L\} .
\end{equation}
\par

Upon conjugating $g$ by $t_{i+1} \cdots t_L$,
we can assume that $i=L$  for the proof of the claim. 
\par

Let us first check that 
$\vert Y_L \cap Z_L \vert \ge 1$.
Indeed, otherwise, set 
\begin{equation}
\tag{$\ddagger$}
\label{ddagger}
h \, = \,  \prod_{i=1}^{L-1} t_i   \hskip.1cm .
\end{equation}
Observe that $\ell (h) \le L-1$.
We have $\vert \su (h) \vert = \vert \su (g) \vert -3$,
and also $k'_h = k'_g -1$, since the cycle of odd length $t_i$
has been deleted in the product defining $h$.
It follows that 
$\ell (h) < \frac{1}{2} \big( \vert \su (h) \vert - k'_h \big)$.
This contradicts the minimality hypothesis on $g$ made above; 
hence $\vert Y_L \cap Z_L \vert \ge 1$.
\par

Let us now show that
$\vert Y_L \cap Z_L \vert \ge 2$.
Indeed, otherwise,  $\vert Y_L \cap Z_L \vert = 1$.
Let again $h$ be defined by (\ref{ddagger});
observe again that $\ell (h) \le L-1$,
and that $\vert \su (h) \vert = \vert \su (g) \vert -2$;
it can be shown that $k'_h = k'_g$ (details below).
It follows that $\ell (h) < \frac{1}{2} \big( \vert \su (h) \vert - k'_h \big)$.
This contradicts again the minimality hypothesis above;
hence $\vert Y_L \cap Z_L \vert \ge 2$.
\par

Here are the announced details.
Let $x,y,z \in X$ be such that $Y_L \cap Z_L = \{x\}$ and $t_L = (x,y,z)$;
Then $x$ is contained in the support of a cycle $d$ of $h$ of length $\ell \ge 2$,
and also by Lemma \ref{twocycles}
in the support of a cycle $c = dt_L$ of $g = ht_L$ of length $\ell + 2$.
Hence $k'_h = k'_g$.
\par

This ends the proof of the Claim.

\vskip.2cm

Lemma \ref{susinsupg} and the claim just proven imply that,
for each $i \in \{1, \hdots, L\}$, 
there are $x_i, y_i, z_i \in X$ such that
\begin{enumerate}[noitemsep,label=(\arabic*)]
\item[]\label{1DEtruc}
$t_i = (x_i, y_i, z_i)$, 
\item[]\label{2DEtruc}
$y_i, z_i \in Z_i$.
\end{enumerate}
Consider the product of $2L$ transpositions, equal to $g$,
obtained from the product (\ref{flatflat})
by changing each $t_i$ to $(x_i, z_i) (x_i, y_i)$,
say
$$
g \, = \, s_1 s_2 \cdots s_{2L-1} s_{2L} .
$$
\par

Set $S = \{s_1, \hdots, s_{2L} \}$;
define $\widetilde \Gamma (S)$ to be the multigraph
with vertex set $V := \bigcup_{j=1}^{2L} \su (s_j)$, 
and one edge connecting $x, y \in V$ for every 
$j \in \{1, \hdots, 2L\}$ with $s_j = (x, y)$;
here, ``multigraph'' means that $\widetilde \Gamma (S)$
may have multiple edges.
On the one hand, 
the number of vertices of this graph is bounded below
by $\vert \su (g) \vert$;
on the other hand, what we have shown so far implies that 
the degree of each vertex of $\widetilde \Gamma (S)$ is at least $2$;
it follows that the number of edges of this graph,
which is at least twice its number of vertices,
is bounded below by $\vert \su (g) \vert$;
in other words, $L \ge \frac{1}{2} \vert \su (g) \vert$.
This is strongly in contradiction with (\ref{flat});
hence the inequality of  (\ref{flat}) is not true,
and this ends the proof of the lemma.
\end{proof}

\emph{Remark concerning the claim of the previous proof.}
Consider an element $g \in \Alt (X)$ which is a word $g=t_1 \cdots t_L$
in the letters of $T^A_X$ of minimal length $L = \ell (g)$,
now with $2 \le L \le \frac{1}{2} ( \su (g) \vert - k'_g)$.
The cardinality $\vert Y_1 \cap Z_1 \vert$ can be any of $0,1, 2, 3$,
as the following examples show:
$$
\aligned
g_0 \, = \,  &(1, 2, 3)(4,5,6)
\hskip.2cm \text{for which} \hskip.2cm
\\
&L=2 ,  \hskip.2cm
\vert \su (g_0) \vert - k'_{g_0} = 6-2 ,
\hskip.2cm \text{and} \hskip.2cm
Y_1 \cap Z_1 = \emptyset ,
\\
g_1 \, = \,  &(1, 4, 5) (1, 2, 3) = (1, 2, 3, 4, 5)
\hskip.2cm \text{for which} \hskip.2cm
\\
&L=2 ,  \hskip.2cm
\vert \su (g_1) \vert - k'_{g_1} = 5-1 ,
\hskip.2cm \text{and} \hskip.2cm
Y_2 \cap Z_2 = \{1\} ,
\\
g_2 \, = \,  &(5, 6, 7) (2, 3, 4) (1, 4, 7) = (1, 2, 3, 4, 5, 6, 7)
\hskip.2cm \text{for which} \hskip.2cm
\\
&L=3 ,  \hskip.2cm
\vert \su (g_2) \vert - k'_{g_2} = 7-1 ,
\hskip.2cm \text{and} \hskip.2cm
Y_3 \cap Z_3 = \{4, 7\} ,
\\
g_3\, = \,  &(1, 8, 9) (5,6,7) (2,3,4) (1, 4, 7) = (1, 2, 3, 4, 5, 6, 7, 8, 9)
\hskip.2cm \text{for which} \hskip.2cm
\\
&L=4 ,  \hskip.2cm
\vert \su (g_3) \vert - k'_{g_3} = 9-1 ,
\hskip.2cm \text{and} \hskip.2cm
Y_4 \cap Z_4 = \{1, 4, 7\} .
\endaligned
$$

\vskip.2cm

Proposition \ref{CforALt(N)}
is a minor generalization of Proposition \ref{AltIntro}.
Recall from Appendix \ref{recallpepo(n)} that $p_e(n)$ 
denotes the number of partitions of $n \in \N$ involving an even number of positive parts.

\begin{prop}
% 28
\label{CforALt(N)}
Let $X$ be an infinite set and $S$ a generating set of $\Alt (X)$.
Suppose either that $S = T^A_X$ or that $X = \N$
and that $S^A_\N \subset S \subset T^A_\N$.
Then
$$
\aligned
C_{\Alt (X), S} (q) \, &= \, 
\sum_{u=0}^\infty p(u) q^u \sum_{v=0}^\infty p_e(v) q^v
% \\
% &= \, \prod_{i=1}^\infty \frac{1}{1-q^i} \hskip.2cm
% \sum_{k=0}^\infty q^{2k} \hskip.2cm
% \prod_{j=1}^{2k} \frac{1}{1-q^j}
\\
&= \, \frac{1}{2} \prod_{j=1}^\infty \frac{1}{(1-q^j)^2} \, + \,
\frac{1}{2} \prod_{j=1}^\infty \frac{1}{1-q^{2j}}
\\
&= \, 1+q+3q^2+5q^3+11q^4+18q^5+34q^6
\\
& \hskip.5cm +55q^7+95q^8+150q^9+244q^{10} + \cdots \hskip.1cm .
\endaligned
$$
\end{prop}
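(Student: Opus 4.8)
The plan is to evaluate the series by enumerating the conjugacy classes of $\Alt(X)$ and reading off each conjugacy length from Lemma~\ref{conjlengthalt}. Since $X$ is infinite, two elements of $\Alt(X)$ are conjugate in $\Alt(X)$ if and only if they are conjugate in $\Sy(X)$, so the conjugacy classes are in bijection with the possible cycle types, that is with multisets of integers $\ge 2$ having an even number of even entries (the parity condition ensuring even signature). Fix such a class, represented by $g$ with $k'_g$ cycles of odd length $\ge 3$ and $2k''_g$ cycles of even length; Lemma~\ref{conjlengthalt} (which covers both $S=T^A_X$ and the case $S^A_\N\subset S\subset T^A_\N$) gives $\kappa_{\Alt(X),S}(g)=\tfrac12(|\su(g)|-k'_g)$. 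Writing the odd lengths as $2a_i+1$ with $a_i\ge 1$ and the even lengths as $2b_j$ with $b_j\ge 1$, one computes $|\su(g)|=\sum_i(2a_i+1)+\sum_j 2b_j$, hence $\kappa_{\Alt(X),S}(g)=\sum_i a_i+\sum_j b_j$.

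Next I would factor $q^{\kappa(g)}$ into a contribution of the odd cycles and one of the even cycles, these varying independently. The half-lengths $a_1\ge a_2\ge\cdots\ge 1$ of the odd cycles form an arbitrary partition $\alpha$ (of any size, with any number of parts, possibly empty), contributing $q^{|\alpha|}$; summing over $\alpha$ gives the factor $\sum_u p(u)q^u=\prod_{j\ge 1}(1-q^j)^{-1}$. The half-lengths $b_1\ge b_2\ge\cdots\ge 1$ of the even cycles form a partition $\beta$ with an \emph{even} number of positive parts, contributing $q^{|\beta|}$; summing over $\beta$ gives $\sum_v p_e(v)q^v$ by the very definition of $p_e$. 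The product of these two sums is the first displayed equality.

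For the second equality I would invoke the refined Euler product $\prod_{j\ge 1}(1-tq^j)^{-1}=\sum_\lambda t^{\#\mathrm{parts}(\lambda)}q^{|\lambda|}$; averaging its specializations at $t=1$ and $t=-1$ isolates the partitions with an even number of parts, so $\sum_v p_e(v)q^v=\tfrac12\big(\prod_j(1-q^j)^{-1}+\prod_j(1+q^j)^{-1}\big)$. Multiplying through by $\prod_j(1-q^j)^{-1}$ and using $\prod_j(1-q^j)^{-1}\prod_j(1+q^j)^{-1}=\prod_j(1-q^{2j})^{-1}$ yields $\tfrac12\prod_j(1-q^j)^{-2}+\tfrac12\prod_j(1-q^{2j})^{-1}$, which is the second line. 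The final numerical line is then a direct power-series expansion.

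The only delicate point is the combinatorial bookkeeping of the first two steps: one must check that the signature condition for $\Alt(X)$ corresponds exactly to ``an even number of parts'' for $\beta$, that $\alpha$ and $\beta$ range over their respective sets independently so that the two series genuinely multiply, and that $\kappa$ is constant on conjugacy classes so the sum over class representatives is well defined. Beyond the input from Lemma~\ref{conjlengthalt}, nothing here is more than routine, so I do not expect a substantive obstacle.
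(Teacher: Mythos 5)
Your proposal is correct and follows essentially the same route as the paper: split each class into its odd-length and even-length cycle parts, read off $\kappa$ from Lemma~\ref{conjlengthalt}, use the parametrization of $\Alt(X)$-classes (valid because $X$ is infinite) by a pair of partitions with the even-parts parity condition, and multiply the two generating series. The only cosmetic difference is that you rederive $\sum_v p_e(v)q^v=\tfrac12\bigl(\prod_j(1-q^j)^{-1}+\prod_j(1+q^j)^{-1}\bigr)$ via the $t=\pm1$ specializations, whereas the paper simply cites Proposition~\ref{genserpepo}, whose proof is the same computation.
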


\begin{proof}
We write $\kappa$ for $\kappa_{\Alt (X), S}$.
\par

Let $g \in \Alt (X)$ be written as a product of disjoint cycles,
say $k'$ of them of odd lengths and $2k''$ of them of even lengths.
Denote by $g_o$ the product of the cycles of odd lengths
and by $g_e$ the product of the cycles of even lengths,
so that $g = g_o g_e$. 
Let $\lambda^{(g)} = (\lambda^{(g)}_1, \hdots, \lambda^{(g)}_{k'}) \vdash u$ 
and $\nu^{(g)} = (\nu^{(g)}_1, \hdots, \nu^{(g)}_{2k''}) \vdash v$
be the partitions such that 
$g_o$ is the product of cycles of lengths
$2\lambda^{(g)}_1+1, \hdots, 2\lambda^{(g)}_{k'}+1$, 
and $g_e$ the product of  cycles of lengths
$2\nu^{(g)}_1, \hdots, 2\nu^{(g)}_{2k''}$;
note that $\vert \su (g_o) \vert = 2u+k'$ and $\vert \su (g_e) \vert = 2v$.
By Lemma \ref{conjlengthalt}, we have 
$$
\kappa(g_o) = u , \hskip.5cm
\kappa(g_e) = v , \hskip.5cm
\text{and} \hskip.5cm
\kappa(g) =  \kappa(g_o) + \kappa(g_e) = u+v .
$$
\par

The set of conjugacy classes in $\Alt (X)$ is naturally parametrized
by pairs $(\lambda, \nu)$ of partitions such that $\nu$ 
has an even number of positive parts.
(It is important here that the set $X$ is infinite, 
otherwise some pairs correspond to \emph{two} conjugacy classes
in the alternating group).
The contribution to $C_{\Alt (X), S}(q)$ of classes of elements
such that $g = g_o$ is therefore 
$\sum_{u=0}^\infty p(u) q^u = \prod_{i=1}^\infty \frac{1}{1-q^i}$;
the contribution of classes of elements such that $g = g_e$ is 
$\sum_{v=0}^\infty p_e(v) q^v = 
\frac{1}{2} \prod_{j=1}^\infty \frac{1}{1-q^j} \, + \,
\frac{1}{2} \prod_{j=1}^\infty \frac{1}{1+q^j}$
% \sum_{j=0}^\infty q^{2j} \prod_{k=1}^{2j} \frac{1}{1-q^k}$
(this uses Proposition \ref{genserpepo});
finally $C_{\Alt(X), S}$ is the product of these two contributions.
\end{proof}

\begin{rem}
% 29
(i)
Recall from Observation \ref{CongIntro} that we denote by
$$
\left( p^A(n) \right)_{n \ge 0} \, = \, (1, 1, 3, 5, 11, 18, 34, 55, 95, 150, 244, \hdots )
$$
the sequence of coefficients of the series of Proposition \ref{CforALt(N)}.
At the day of writing, this sequence
does not appear in \cite{OEIS}.

\vskip.2cm

(ii)
The sums and products in the previous proposition 
converge again for $q$ complex with $\vert q \vert < 1$.
Numerically, the roots of smallest absolute value of $C_{\Alt (\N), T^A_\N} (q)$
are simple and located at $\sim 0.67\pm 0.43i$.

\vskip.2cm

(iii)
As in the case of $C_{\Sy (X), S}(q)$, see Proposition \ref{sharpening}(a), 
it can be observed that the series $C_{\Alt (X), T^A_X}(q)$
does not depend on the cardinality of $X$,
as long as $X$ is infinite.
\end{rem}

\section{\textbf{Congruences \`a la Ramanujan for the coefficients of the series of 
Proposition \ref{CforALt(N)} }}   
% section6
\label{sectionRama}
Ramanujan, and later Watson, Atkin, Andrews, and others, have discovered 
remarkable congruence properties for the partition function,
including
\begin{eqnarray*}
p(5n+4) &\equiv& 0 \pmod{5}, \\
p(7n+5) &\equiv& 0 \pmod{7}, \\
p(11n+6) &\equiv& 0 \pmod{11}, \\
p(25n + 24) &\equiv& 0 \pmod{5^2}, \\
p(125n + 99) &\equiv& 0 \pmod{5^3}, \\
p(49n+47) &\equiv& 0 \pmod{7^2}, \\
p(121n+116) &\equiv& 0 \pmod{11^2}.
\end{eqnarray*}
See for example \cite{Hard--40},
or \cite{Bern--06} and references there.
% and \cite{Atki--67}.

\vskip.2cm

Consider a finite group $H$ with $M$ conjugacy class, 
an infinite set $X$,
the permutational wreath product $W = H \wr_X \Sy (X)$, 
a generating set $S$ that satisfies Condition (\ref{PCwr}),
and the corresponding conjugacy growth series
$$
C_{W,S}(q) \, = \, \prod_{k=1}^\infty \frac{1}{(1 - q^k)^M}  
\, = \, \sum_{n=0}^\infty p(n)_{(M)} q^n
$$
as in Proposition \ref{wreath}.
There is an important literature on congruence properties of the sequences
$\big( p(n)_{(M)} \big)_{n=0, 1, 2, \hdots}$ of so-called
\emph{multipartition numbers}. In particular:
\begin{equation}
\tag{Gandhi}
\label{Gandhi}
\aligned
p(5n+3)_{(2)} \, &\equiv \, 0\pmod5 ,
\\
p(11n+4)_{(8)} \, &\equiv \, 0\pmod{11} ,
\endaligned
\end{equation}
\begin{equation}
\tag{Andrews}
\label{Andrews}
p(5n+B)_{(2)} \, \equiv \, 0\pmod5 
\hskip.5cm \text{for} \hskip.2cm B \in \{2, 3, 4\} ,
\end{equation}
\begin{equation}
\tag{CDHS}
\label{CDHS}
p(25n+23)_{(2)} \, \equiv \, 0\pmod{25}  .
\end{equation}
See \cite{Gand--63}, a particular case of Theorem 1 in \cite{Andr--08},
and Fromula (1.17) in  \cite{CDHS--14}, respectively.

\vskip.2cm

Like the partition numbers $p(n)$ and the multipartition numbers $p(n)_{(M)}$, 
the coefficients of the conjugacy growth series 
$$
C_{\Alt (X), S} (q)
\, = \, 
\frac{1}{2} \prod_{j=1}^\infty \frac{1}{(1-q^j)^2} \, + \,
\frac{1}{2} \prod_{j=1}^\infty \frac{1}{1-q^{2j}}
\, = \, 
\sum_{n=0}^\infty p^A(n) q^n
$$
of Proposition \ref{CforALt(N)}
verify intriguing congruence relations,
as was recorded in Observation \ref{CongIntro}
of the Introduction.
With the notation of Appendix \ref{AppendixC}, 
the coefficients of this series  can be written as
$$
p^A(n) \, = \, \frac{1}{2} \left( p(n)_{(2)} + p(n)_{(0,1)} \right) .
$$

\begin{prop}
% 30
\label{p^Asure}
With the notation above, we have
$$
\aligned
p^A(5n+3) &\equiv 0 \pmod 5 ,
\\
p^A(10n+7) &\equiv 0 \pmod 5 ,
\\
p^A(10n+9) &\equiv 0 \pmod 5 ,
\\
p^A(25n+23) & \equiv 0 \pmod{25} .
\endaligned
$$
\end{prop}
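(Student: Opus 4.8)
The plan is to reduce each of the four congruences to known arithmetic properties of the $2$-colored partition numbers $p(n)_{(2)}$ and of the ordinary partition numbers $p(m)$, via the identity $2p^A(n) = p(n)_{(2)} + p(n)_{(0,1)}$ recorded just above the statement. Since $2$ is invertible modulo $5$ and modulo $25$, it suffices to prove each relation with $2p^A(n)$ in place of $p^A(n)$. Moreover, with the notation of Appendix \ref{AppendixC}, $\sum_{n\ge0}p(n)_{(0,1)}q^n = \prod_{j\ge1}(1-q^{2j})^{-1} = \sum_{m\ge0}p(m)q^{2m}$, so $p(n)_{(0,1)} = p(n/2)$ when $n$ is even and $p(n)_{(0,1)} = 0$ when $n$ is odd. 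Thus for each relevant $n$ I only need to bound, modulo $5$ or $25$, the number $p(n)_{(2)}$ and --- when $n$ is even --- the classical partition number $p(n/2)$.

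For $p^A(10n+7)$ and $p^A(10n+9)$ this is immediate: both $10n+7$ and $10n+9$ are odd, so their $(0,1)$-contributions vanish, and since $10n+7 = 5(2n+1)+2$ and $10n+9 = 5(2n+1)+4$, the divisibilities $5\mid p(10n+7)_{(2)}$ and $5\mid p(10n+9)_{(2)}$ are the cases $B=2$ and $B=4$ of Andrews' congruences \eqref{Andrews}.

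For $p^A(5n+3)$, Andrews' congruence \eqref{Andrews} with $B=3$ (equivalently \eqref{Gandhi}) gives $5\mid p(5n+3)_{(2)}$, so it remains to treat $p(5n+3)_{(0,1)}$: it is $0$ when $n$ is even, while if $n=2j+1$ then $5n+3 = 10j+8$ is even and $p(5n+3)_{(0,1)} = p(5j+4)$, which is $\equiv0\pmod5$ by Ramanujan's congruence $p(5m+4)\equiv0\pmod5$; hence $5\mid 2p^A(5n+3)$ in both parities. The last relation is obtained by the same split one level higher: by \eqref{CDHS} one has $25\mid p(25n+23)_{(2)}$, while $p(25n+23)_{(0,1)}$ is $0$ for $n$ even and equals $p(25j+24)$ when $n=2j+1$ (since then $25n+23 = 50j+48$), which is $\equiv0\pmod{25}$ by Ramanujan's congruence $p(25m+24)\equiv0\pmod{25}$.

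I do not expect a genuine obstacle here: the whole argument is parity bookkeeping together with quotation of the congruences of Andrews, of \cite{CDHS--14}, and of Ramanujan. The one step requiring deliberate care is checking that whenever $5n+3$ (respectively $25n+23$) happens to be even --- which forces $n$ odd --- halving it lands precisely in the progression $5m+4$ (respectively $25m+24$) to which a classical Ramanujan congruence applies; the elementary computations above confirm this, and the corresponding check for $10n+7$ and $10n+9$ is trivial since those arguments are always odd.
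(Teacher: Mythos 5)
Your proof is correct and follows essentially the same route as the paper: split $2p^A(n)=p(n)_{(2)}+p(n)_{(0,1)}$, kill the $(2)$-part by the congruences (\ref{Gandhi})/(\ref{Andrews})/(\ref{CDHS}), and handle $p(n)_{(0,1)}$ via $p(2m)_{(0,1)}=p(m)$, $p(\text{odd})_{(0,1)}=0$, together with Ramanujan's $p(5m+4)\equiv 0\pmod 5$ and $p(25m+24)\equiv 0\pmod{25}$. If anything, your treatment of the case $25n+23$ even (landing on $p(25j+24)$) is spelled out more explicitly than in the paper's rather terse "similarly" for that case.
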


\begin{proof}
One the one hand,
as recorded above in (\ref{Gandhi}),
it is known that $p(5n+3)_{(2)} \equiv 0 \pmod 5$
for all $n \ge 0$.
%\par
On the other hand, it follows from the definitions that
$$
p(k)_{(0,1)} \, = \, \left\{
\aligned 
p(m) \hskip.2cm &\text{if} \hskip.2cm k=2m
\\
0 \hskip.5cm &\text{if $k$ is odd}.
\endaligned
\right.
$$
Since $p(5n+4) \equiv 0 \pmod 5$ for all $n \ge 0$,
we have also $p(5n+3)_{(0,1)} \equiv 0 \pmod 5$ for all $n \ge 0$.
Hence $p^A(5n+3) = \frac{1}{2} \left( p(4n+3)_{(2)} + p(4n+3)_{(0,1)} \right)
\equiv 0 \pmod 5$ for all $n \ge 0$.
\par
Similarly, 
since $p(n)_{(0,1)} = 0$ for all odd $n$, 
the congruences for $p^A(10n+7)$ and $p^A(10n+9)$ follows from
(\ref{Andrews}), 
and for $p^A(25n+23)$ from (\ref{CDHS}).
\end{proof}

\noindent
\textbf{On the conjectured relations of Observation \ref{CongIntro}.}
For $p^A(\cdot)$,
Proposition \ref{p^Asure} contains the established part of Observation \ref{CongIntro}.
The remaining congruences of this observation follow from the congruences
\begin{align*}
&p(49n+17)_{(2)}\equiv 0\pmod 7,&p(49n+33)_{(1)}\equiv 0\pmod 7,\\
&p(49n+31)_{(2)}\equiv 0\pmod 7,&p(49n+40)_{(1)}\equiv 0\pmod 7,\\
&p(49n+38)_{(2)}\equiv 0\pmod 7,&p(49n+19)_{(1)}\equiv 0\pmod 7,\\
&p(49n+45)_{(2)}\equiv 0\pmod 7,&p(49n+47)_{(1)}\equiv 0\pmod 7,\\
&p(121n+111)_{(2)}\equiv 0\pmod{11},&p(121n+116)_{(1)}\equiv 0\pmod{11}.\\
\end{align*}
For what we know, the congruences of the left-hand side are conjectural,
with numerical evidence recorded in our Appendix \ref{AppendixC}.
The congruences on the right-hand side are all established,
and are indeed particular cases of the classical congruences $p(7n+5) \equiv 0 \pmod 7$
and $p(11n+6) \equiv 0\pmod{11}$.

\appendix

\section{\textbf{Three lemmas on symmetric and alternating groups}}
% AppendixA
\label{AppendixA}

For reference elsewhere, 
we state here three elementary facts.
Recall from the introduction that, for $a,b \in \Sy (X)$, we agree that $ab$
denotes $b$ \emph{followed} by $a$.
The first lemma is straightforward:

\begin{lem}
% 31
\label{twocycles}
Let $X$ be a set with at least $3$ elements,
and $a,b \in \Sy (X)$ two cycles such that their supports
have exactly one element in common.
\par

Then $ab$ is a cycle 
and $\text{sup}(ab) = \text{sup}(a) \cup \text{sup}(b)$.
More precisely, if $a = (x_1, \hdots, x_r)$ and $b = (x_r, \hdots, x_{r+s-1})$,
then $ab = (x_1, \hdots, x_{r+s-1})$.
\end{lem}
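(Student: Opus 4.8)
The plan is to prove the statement by direct computation, after fixing a convenient labelling of the two cycles. Since a cycle may be written starting from any of its elements, I would first choose the labelling asserted in the statement: writing the unique common point of $\su(a)$ and $\su(b)$ as a single element, label the elements of $\su(a)$ as $x_1, \dots, x_r$ with $a = (x_1, \dots, x_r)$ and with the common point equal to $x_r$, and label the remaining elements of $\su(b)$ as $x_{r+1}, \dots, x_{r+s-1}$ so that $b = (x_r, x_{r+1}, \dots, x_{r+s-1})$. It is worth stressing at the outset that this labelling is available for \emph{any} pair of cycles meeting in exactly one point, so the ``more precise'' form of the statement is really a normal form, not an extra hypothesis. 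Under it, the assumption $\su(a) \cap \su(b) = \{x_r\}$ says precisely that $x_1, \dots, x_{r+s-1}$ are pairwise distinct, that $x_{r+1}, \dots, x_{r+s-1} \notin \su(a)$, and that $x_1, \dots, x_{r-1} \notin \su(b)$.

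Next I would evaluate $ab$ (recall that $ab$ means applying $b$ first and then $a$) at each point, splitting into cases. For $i \in \{1, \dots, r-1\}$: $b$ fixes $x_i$ and $a$ sends it to $x_{i+1}$, so $ab(x_i) = x_{i+1}$. For $i = r$: $b$ sends $x_r$ to $x_{r+1}$ and $a$ fixes $x_{r+1}$, so $ab(x_r) = x_{r+1}$. For $i \in \{r+1, \dots, r+s-2\}$ (a possibly empty range): $b$ sends $x_i$ to $x_{i+1}$ and $a$ fixes $x_{i+1}$, so $ab(x_i) = x_{i+1}$. For $i = r+s-1$: $b$ sends $x_{r+s-1}$ to $x_r$ and $a$ sends $x_r$ to $x_1$, so $ab(x_{r+s-1}) = x_1$. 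Finally, any $y \in X$ outside $\{x_1, \dots, x_{r+s-1}\}$ lies in neither support, so $ab(y) = y$. Reading these off shows $ab = (x_1, x_2, \dots, x_{r+s-1})$; in particular $ab$ is a cycle and $\su(ab) = \{x_1, \dots, x_{r+s-1}\} = \su(a) \cup \su(b)$.

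There is no genuine obstacle here; the only point needing care is the bookkeeping at the ``hinge'' element $x_r$, where one must keep the convention straight so that $x_r \mapsto x_{r+1}$ (apply $b$, then $a$ does nothing) rather than $x_r \mapsto x_1$. I would present the four-case evaluation compactly rather than belabouring it, since the degenerate situations ($s = 2$, or $r = 2$) are already absorbed by the case split.
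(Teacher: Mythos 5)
Your proof is correct; the paper states this lemma without proof (it is explicitly called straightforward), and your direct evaluation of $ab(x_i)$ in the four cases $i<r$, $i=r$, $r<i<r+s-1$, $i=r+s-1$, under the convention that $b$ acts first, is exactly the intended verification. Your preliminary remark that any two cycles whose supports share exactly one point can be relabelled into the stated normal form is the one point worth making explicit, and you make it.
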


The next lemma is well-known. See e.g.\ 
\cite[Lemmas 3.10.1 and 3.10.2]{GoRo--01},
where the proof of (2) is left as an exercise.

\begin{lem}
% 32
\label{arbrecycle}
Let $X$ be a non-empty set, $S$ a set of transpositions of $X$,
and $\Gamma (S)$ the transposition graph, as in Definition \ref{transpgraph}.
\begin{enumerate}[noitemsep,label=(\arabic*)]
\item\label{1DEarbrecycle}
$S$ generates $\Sy (X)$ if and only if $\Gamma (S)$ is connected.
\item\label{2DEarbrecycle}
Suppose that $X$ is finite, say of cardinality $n$,
and that $\Gamma (S)$ is a tree.
Let $s_1, s_2, \hdots, s_{n-1}$ be an enumeration of the elements of $S$.
\par
Then the product $s_1 s_2 \cdots s_{n-1}$ is a cycle of length $n$. 
\end{enumerate}
\end{lem}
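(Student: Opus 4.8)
The plan is to handle the two parts separately: part~(1) packages the familiar fact that transpositions generate $\Sy(X)$, while part~(2) goes by induction on $n$, peeling off a leaf of the tree and regluing it via Lemma~\ref{twocycles}.

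For part~(1), in the direction ``$\Gamma(S)$ connected $\Rightarrow$ $S$ generates'', I would recall (as in the preliminary step of Section~\ref{sectionproofs}) that $\Sy(X)$ is generated by the set of all transpositions, so it suffices to express an arbitrary transposition $(x,y)$ as a product of elements of $S$ (note $S^{-1}=S$). Taking a path $x=v_0,v_1,\dots,v_r=y$ in $\Gamma(S)$, the identity
\[
(x,y) \, = \, (v_0,v_1)(v_1,v_2)\cdots(v_{r-1},v_r)\cdots(v_1,v_2)(v_0,v_1)
\]
does exactly this. For the converse, if $\Gamma(S)$ is disconnected, pick a connected component $A\subsetneq X$; every $s\in S$ has $\su(s)\subseteq A$ or $\su(s)\subseteq X\smallsetminus A$, so every element of $\langle S\rangle$ preserves $A$ setwise, whence $(a,b)\notin\langle S\rangle$ for $a\in A$, $b\notin A$, and $S$ does not generate $\Sy(X)$.

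For part~(2), I induct on $n\ge 2$, the base case $n=2$ being a single transposition. For $n\ge 3$, the tree $\Gamma(S)$ has a leaf $\ell$, incident to a unique edge, corresponding to some $s_j=(\ell,m)$ in the given enumeration. Removing $\ell$ yields a tree on $X\smallsetminus\{\ell\}$ whose edges are $\{s_i : i\ne j\}$, so by the induction hypothesis applied to the enumeration $s_1,\dots,\widehat{s_j},\dots,s_{n-1}$ the product $c:=s_1\cdots\widehat{s_j}\cdots s_{n-1}$ is an $(n-1)$-cycle, hence $\su(c)=X\smallsetminus\{\ell\}$. Setting $a=s_1\cdots s_{j-1}$ and $b=s_{j+1}\cdots s_{n-1}$ (either possibly empty), we get $s_1\cdots s_{n-1}=a\,(\ell,m)\,b=\big(a(\ell,m)a^{-1}\big)\,ab=(a\ell,\,am)\,c$. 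Since $\ell$ is a leaf, no $s_i$ with $i\ne j$ moves $\ell$, so $a\ell=\ell$ and this equals $(\ell,am)\,c$; moreover $am\in X\smallsetminus\{\ell\}=\su(c)$, so $\su\big((\ell,am)\big)\cap\su(c)=\{am\}$ is a single point. Lemma~\ref{twocycles} then gives that $(\ell,am)\,c$ is a cycle with support $\{\ell,am\}\cup\su(c)=X$, i.e.\ an $n$-cycle.

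I expect the only delicate point to be the bookkeeping that the leaf vertex $\ell$ is fixed by the partial products $a$ (and $b$), so that $a(\ell,m)a^{-1}$ is again a transposition incident to $\ell$, and that the inductively produced $(n-1)$-cycle is supported on all of $X\smallsetminus\{\ell\}$ so that Lemma~\ref{twocycles} applies with an intersection of size exactly one; both are immediate once a leaf has been chosen, and the position $j$ of the leaf-edge in the enumeration (possibly $j=1$ or $j=n-1$) causes no trouble because $a$ or $b$ may be the empty product.
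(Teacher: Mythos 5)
Your proof is correct and follows essentially the same route as the paper: for (1) the same connectivity/conjugation-along-a-path argument in both directions, and for (2) the same induction peeling off a leaf and invoking Lemma~\ref{twocycles}. The only cosmetic difference is how you account for the position of the leaf edge in the enumeration (you conjugate it past the prefix $a$, noting $a$ fixes the leaf, whereas the paper cyclically conjugates the whole product to put that edge last); both devices are sound and equivalent.
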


\begin{proof}
(1)
Denote by $G$ the subgroup of $\Sy (X)$ generated by $S$.
\par

Suppose that $\Gamma (S)$ is not connected.
Choose a connected component of $\Gamma (S)$, 
denote by $X_1$ its vertex set, and set $X_2 = X \smallsetminus X_1$.
Then $G$ is a subgroup of the proper subgroup
$\Sy (X_1) \times \Sy (X_2)$ of $\Sy (X)$, hence $S$ does not generate $\Sy (X)$.
\par

Assume that $\Gamma (S)$ is connected.
We have to show that $G = \Sy (X)$. 
Since this is trivial when $\vert X \vert \le 2$, we assume that $\vert X \vert \ge 3$.
Let $x,y,z$ be three distinct elements in $X$;
observe that $(y,z)(x,y)(y,z) = (x,z)$.
For two distinct elements $u,v$ in $X$, it follows that $(u,v) \in G$
by induction on the length of a path connecting $u$ and $v$ in $\Gamma (S)$.
Hence $G$ contains all transpositions of elements of $X$,
and therefore $G = \Sy (X)$.

\vskip.2cm

(2)
We proceed by induction on $n$.
Note that the lemma is obvious for $n = 2$; suppose that $n > 2$,
and that the lemma holds up to $n-1$.
\par

Choose a leaf $x$ of $\Gamma (S)$. 
There is a unique $i(x) \in \{1, \hdots, n-1\}$ such that $x \in \su (s_{i(x)})$.
Upon replacing the product $s_1 \cdots s_{n-1}$ by a conjugate element, 
we can assume that $s_{i(x)} = s_{n-1}$.
By the induction hypothesis, the product $s_1 \cdots s_{n-2}$ is now a cycle $c'$ of length $n-1$.
By Lemma \ref{twocycles}, 
$s_1 \cdots s_{n-2} s_{n-1} = c' s_{n-1}$ is a cycle of length $n$.
\end{proof}

The third lemma is a cheap confirmation of the fact that most pairs of elements of $\Sy (n)$
generate either $\Alt (n)$ or $\Sy (n)$ \cite{Baba--89}.

\begin{lem}
% 33
\label{twocyclesgen}
Let $X$ be a non-empty set with at least $3$ elements,
$a,b \in \Sy (X)$ two cycles, respectively of lengths $\ell, m \ge 2$,
such that their supports have exactly one element in common
(as in Lemma \ref{twocycles}).
Let $G$ be the subgroup of $\Sy (X)$ generated by $\{a,b\}$.
\par

Then $G$ is isomorphic to the alternating group $\Alt (\ell + m - 1)$ if $\ell, m$ are both odd,
and to $\Sy (\ell + m -1)$ otherwise.
\end{lem}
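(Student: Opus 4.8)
The plan is to reduce the whole statement to one classical fact: a primitive subgroup of $\Sy(n)$ that contains a $3$-cycle contains $\Alt(n)$ (a theorem of Jordan). Everything else is bookkeeping and a parity remark. The step I expect to be the real obstacle is establishing primitivity.

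First I would normalize the setup. Replacing $X$ by $\su(a)\cup\su(b)$ changes nothing, since the points outside that set are fixed by every element of $G$; so assume $X=\su(a)\cup\su(b)$, whence $n:=\vert X\vert=\ell+m-1\ge 3$ because $\su(a)\cap\su(b)$ is a single point. Writing the cycles so that the common point is the last letter of $a$ and the first letter of $b$, Lemma~\ref{twocycles} gives that $c:=ab$ is an $n$-cycle with support $X$; fix an identification $X=\{x_1,\dots,x_n\}$ for which $c=(x_1,x_2,\dots,x_n)$, $a=(x_1,\dots,x_\ell)$ and $b=(x_\ell,x_{\ell+1},\dots,x_n)$, reading indices modulo $n$. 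Then $\langle c\rangle$ is a regular cyclic subgroup and $G$ is transitive. The parity question is immediate: the $\ell$-cycle $a$ is an even permutation exactly when $\ell$ is odd, and similarly for $b$ and $m$; hence $G\subset\Alt(n)$ when $\ell,m$ are both odd, while $G$ contains an odd permutation otherwise. So it suffices to prove $G\supset\Alt(n)$ in every case.

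Next I would exhibit a $3$-cycle in $G$. Conjugating $a$ by $c$ gives $cac^{-1}=(x_2,x_3,\dots,x_{\ell+1})\in G$ (note $2\le\ell\le n-1$ since $m\ge 2$), and a short direct computation yields
\[
a^{-1}\,(cac^{-1})\;=\;(x_1,\,x_\ell,\,x_{\ell+1})\in G ,
\]
a genuine $3$-cycle. By Jordan's theorem it then remains only to show that $G$ is primitive; combining that with the previous paragraph finishes the proof, with $G=\Alt(n)$ if $\ell,m$ are both odd and $G=\Sy(n)$ otherwise.

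For primitivity, suppose toward a contradiction that $G$ has a nontrivial block system, necessarily of block size $d$ with $1<d<n$ and $d\mid n$; set $e=n/d\ge 2$. Since $\langle c\rangle$ is regular cyclic, the blocks are the cosets in $\Z/n\Z$ of the order-$d$ subgroup $H=\langle e\rangle$; a coset of $H$ consists of $d$ points equally spaced, with gap $e$, around the cyclic order $x_1,\dots,x_n$, so it is contained in an interval of $r$ consecutive elements of that order if and only if $e\ge n-r+1$. Now $a$ permutes the blocks, fixes pointwise every block disjoint from $\su(a)=\{x_1,\dots,x_\ell\}$, and sends $x_1$ to $x_2$; hence $a$ maps the block of $x_1$ onto the distinct block of $x_2$, so any element of the block of $x_1$ lying outside $\su(a)$ would have to lie in the block of $x_2$, which is impossible. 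Therefore the block of $x_1$ is contained in $\su(a)$, forcing $e\ge n-\ell+1=m$. The same argument applied to $b$, whose support $\{x_\ell,\dots,x_n\}$ is likewise an interval of $m$ consecutive elements and which sends $x_\ell$ to $x_{\ell+1}$, gives $e\ge n-m+1=\ell$. But then $n=de\ge 2e\ge 2\max(\ell,m)\ge\ell+m=n+1$, a contradiction. Hence $G$ is primitive, which completes the argument.
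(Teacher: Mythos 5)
Your proof is correct, but it follows a genuinely different route from the paper's. The paper stays self-contained: it takes the commutator $a^{-1}b^{-1}ab$, which is the $3$-cycle $(x,y,z)$ on the common point $x$ and its two preimages, conjugates it by powers of $a$ and of $b$, and then invokes its own Lemma \ref{sgenalt} (on the generating sets $R^A_n$ and $S^A_n$ of $3$-cycles) twice to build up $\Alt(\su(a)\cup\su(b))$ directly, finishing with the same parity observation you make. You instead exploit Lemma \ref{twocycles} to see that $ab$ is an $(\ell+m-1)$-cycle, so that $G$ contains a regular cyclic subgroup; you extract the $3$-cycle $a^{-1}(cac^{-1})=(x_1,x_\ell,x_{\ell+1})$ (your computation checks out with the paper's composition convention), prove primitivity by classifying the invariant partitions for the regular cyclic group as coset partitions and playing the two interval supports $\su(a)$, $\su(b)$ against the spacing $e$ of a block (the inequality $n=de\ge 2\max(\ell,m)\ge \ell+m=n+1$ is the right contradiction), and then appeal to Jordan's classical theorem that a primitive group containing a $3$-cycle contains the alternating group. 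What each approach buys: the paper's argument uses only material established earlier in the paper, at the cost of a somewhat ad hoc generation argument; yours is structurally cleaner and illustrates why the hypothesis forces the full alternating group, but it imports an external classical theorem (Jordan) that the paper deliberately avoids, and it needs the small extra observations you supply (blocks of a regular cyclic group are cosets, and the containment-in-an-interval criterion, where only the ``only if'' direction is actually needed).
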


\begin{proof}
Denote by $x$ the element in $\su (a) \cap \su (b)$;
set $y = a^{-1}(x)$ and $z = b^{-1}(x)$.
The commutator $a^{-1}b^{-1}ab$ is the $3$-cycle $c := (x,y,z)$.
By Lemma \ref{sgenalt} for $R^A_{\ell + 1}$,
the conjugates of $c$ by the powers of $a$ generate $\Alt (\su (a) \cup \{z\})$;
similarly the conjugates of $c$ by the powers of $b$ generate $\Alt (\{x\} \cup \su (b))$.
\par

Observe that the intersection $\Alt (\su (a) \cup \{z\}) \cap \Alt (\{x\} \cup \su (b))$ contains $c$,
and the union $\Alt (\su (a) \cup \{z\}) \cup \Alt (\{x\} \cup \su (b))$ contains
a set of $3$-cycles similar to $S^A_{\ell + m -1}$.
By Lemma \ref{sgenalt} again, this time for $S^A_{\ell+m-1}$,
the group $G$ contains $\Alt (\su (a) \cup \su(b))$, isomorphic to $\Alt (\ell + m -1)$.
\par

If $\ell$ and $m$ are both odd, every element in $G$ has an even signature,
hence $G = \Alt (\su (a) \cup \su(b)) \simeq \Alt (\ell+m-1)$.
Otherwise, $G$ is a subgroup of $\Sy (\su (a) \cup \su (b))$ 
in which $\Alt (\su (a) \cup \su(b))$ is a proper subgroup,
hence $G = \Sy (\su (a) \cup \su (b)) \simeq \Sy (\ell+m-1)$.
\end{proof}

This lemma implies for example that the set
$$
\{(0,1,2),(2,3,4),(4,5,6), \hdots, (2i, 2i+1, 2i+2), \hdots\}
$$
generates $\Alt(\N)$.
It is a proper subset of the generating set $S^A_\N$ introduced
in the beginning of Section \ref{sectionalt}.

\section{\textbf{Reminder on partitions and derangements}}
% appendixB
\label{AppendixB}

\subsection{The partition function}
% appendixB.a 
\label{recallp(n)}   
For $n \in \N$, let $p(n)$ denote the number of partitions of $n$.
The first values are given by the table
$$
\begin{array}{c|cccccccccccccccc}
n&0&1&2&3&4&5&6&7&8&9&10&11&12&13&14&15 \\
\hline
p(n)&1&1&2&3&5&7&11&15&22&30&42&56&77&101&135&176 \\
\end{array}
$$
(more values in \cite[A000041]{OEIS}).
\par

In our context $p(n)$ is the number of conjugacy classes 
in the finite symmetric group $\Sy (n)$,
alternatively the number of conjugacy classes in $\Sy (\N)$
of elements of supports of size at most $n$.
For this reason, the partition function appears 
already in Propositions \ref{firstsample} and \ref{propS0}.
\par

It is known since Euler 
% dans le texte en latin \cite[Caput XVI]{Eule--48}, voir page 295 / 270.
that the generating series for $p(n)$ has a product expansion
\begin{equation}
\tag{EP$_1$}
\label{EulerProd}  
\sum_{n=0}^\infty p(n) q^n 
\, = \,  \prod_{k=1}^\infty \frac{1}{1-q^k}  \hskip.1cm .
\end{equation}
See \cite[Caput XVI]{Eule--48},
as well as, for example, \cite[Section 19.3]{HaWr--79}.
The equality can be viewed either between formal expressions,
or between absolutely converging sum and product for $q \in \C$ with $\vert q \vert < 1$.
\par

There is an asymptotic formula for $n \to \infty$
$$
\aligned
p(n) \, &= \, \frac{1}{4 \sqrt 3 \left(n - \frac{1}{24}\right)} \hskip.1cm
                 \exp \left( \pi \sqrt{ \frac{2}{3}  \left(n - \frac{1}{24}\right) } \hskip.1cm \right)
\\               
         \, &+ \, O \left( \frac{1}{\left(n - \frac{1}{24}\right)^{3/2}} \hskip.1cm
         \exp \left( \pi \sqrt{ \frac{2}{3}  \left(n - \frac{1}{24}\right) } \hskip.1cm \right) \right)
\endaligned
$$
due to Hardy and Ramanujan
\cite[Formula (1.41)]{HaRa--18}.
For this and more on $p(n)$ when $n \to \infty$,
see e.g.\ \cite[Chapter VII]{Chan--70} and \cite[Chapters VI and VIII]{Hard--40}.
This shows in particiular that the sequence $(p(n))_{n \ge 0}$ has \textbf{intermediate growth},
i.e.\ that its growth is superpolynomial and subexponential.

\subsection{Partitions with $k$ parts}
% appendixB.b
\label{recallpk(n)}
For $n,k \in \N$, we denote by $p_k(n)$
the number of partitions of $n$ in exactly $k$ positive parts,
equivalently the number of partitions of $n$ with largest part $k$,
equivalently the number of partitions of $n-k$ in $k$ non-negative parts.
Whenever needed, we set $p_k(n) = 0$ for all $n \in \N$ and $k < 0$.
Numbers $p_k(\cdot)$ appear 
in connection with finite symmetric groups, 
in Propositions \ref{firstsample},
\ref{propS0}, \ref{rodrigues}, and \ref{growthpolSym(n)S0}.
\par

We have classically
$$
\aligned
p_0(0) \, &= \, 1 \hskip.5cm \text{and} \hskip.5cm 
   p_0(n) \, = \, 0 \hskip.5cm \text{for all} \hskip.2cm n \ge 1,
\\
p_1(0) \, &= \, 0 \hskip.5cm \text{and} \hskip.5cm
p_1(n) \, = \, 1 \hskip.5cm \text{for all} \hskip.2cm n \ge 1,
\\
p_2(n) \, &= \, \lfloor n/2 \rfloor \hskip.5cm \text{for all} \hskip.2cm n \ge 0,
\\
p_3(n) \, &= \, \lfloor \frac{1}{12}(n^2+6) \rfloor \hskip.5cm \text{for all}
   \hskip.2cm n \ge 0 \hskip.2cm \text{\cite[A069905]{OEIS}},
\\
\hdots \, & \hskip1cm \hdots
\\
p_{n-2}(n) \, &= \, 2 \hskip.5cm \text{for all} \hskip.2cm n \ge 4,
\\
p_{n-1}(n) \, &= \, p_n(n) \, = \, 1 \hskip.5cm \text{for all} \hskip.2cm n \ge 2,
\\
p_k(n) \, &= \, 0  \hskip.5cm \text{for all} \hskip.2cm k > n  \ge 0 ,
\\
p_k(n) \, &= \, p_k(n-k) + p_{k-1}(n-1)
\hskip.5cm \text{for all} \hskip.2cm n \ge k \ge 1 ,
\\
\sum_{k=0}^n& p_k(n) \, = \, \sum_{k=1}^n p_k(n) \, = \, p(n) 
\hskip.5cm \text{for all} \hskip.2cm n \ge 1
\hskip.1cm ,
\endaligned
$$
% A001400		Number of partitions of n into at most 4 parts. 
and the generating function
\begin{equation}
\tag{EP$_2$}
\label{Euler312}
\sum_{n \ge 0} p_k(n) q^n \, = \, q^k \prod_{i=1}^k \frac{1}{1-q^i} 
\hskip.5cm \text{for all} \hskip.2cm k \ge 0 
\hskip.1cm .
\end{equation}
(Observe that $\sum_{n \ge 0} p_k(n) q^n = \sum_{n \ge k} p_k(n) q^n$.)
Up to the notation, Equality (\ref{Euler312})
is contained in Number 312 of \cite[Caput XVI]{Eule--48}.
\par

Moreover, if $P(n,t) := \sum_{k=0}^n p_k(n)t^k$, then
\begin{equation}
\tag{EP$_3$}
\label{Euler304}
\sum_{n=0}^\infty P(n,t) q^n \, = \, \prod_{j=1}^\infty \frac{1}{1 - tq^j} 
\hskip.1cm .
\end{equation}
This appears in Number 304 of \cite[Caput XVI]{Eule--48},
and is used in the proof of our Proposition \ref{genserpepo}.
\par

For $n,\ell \in \N$ with $n \le 2\ell$, every partition of $n-\ell$ has at most $\ell$ parts.
Thus every partition of $n-\ell$ can be obtained 
from a unique partition of $n$ in $\ell$ parts
by substracting $1$ from each part. Consequently
\begin{equation}
\tag{EP$_4$}
\label{Euler?}
p_\ell(n) \, = \, p(n-\ell)
\hskip.5cm \text{for integers} \hskip.2cm n,\ell 
\hskip.2cm \text{such that} \hskip.2cm 0 \le \ell \le n \le 2\ell
\hskip.1cm ,
\end{equation}
or, setting $k = n-\ell$, 
\begin{equation}
\tag{EP$^{'}_4$}
\label{Euler??}
p_{n-k}(n) \, = \, p(k)
\hskip.5cm \text{for integers} \hskip.2cm n, k 
\hskip.2cm \text{such that} \hskip.2cm k \ge 0 \hskip.2cm \text{and} \hskip.2cm 2k \le n 
\hskip.1cm .
\end{equation}

\par

The double sequence $\left( p_k(n) \right)_{n \ge 0, \hskip.1cm 0 \le k \le n}$
gives rise to a generalized Pascal triangle
of which the first rows are:

\tiny
\begin{equation}
\label{PTp}
\tag{PTp}
\begin{array}{ccccccc}
p_0(0) & & & & & &
\\ 
p_0(1) & p_1(1) & & & & &
\\
p_0(2) & p_1(2) & p_2(2) & & & &
\\
p_0(3) & p_1(3) & p_2(3) & p_3(3) & & &
\\
p_0(4) & p_1(4) & p_2(4) & p_3(4) & p_4(4)& &
\\
p_0(5) & p_1(5) & p_2(5) & p_3(5) & p_4(5) & p_5(5) &
\\
p_0(6) & p_1(6) & p_2(6) & p_3(6) & p_4(6) & p_5(6)  & p_6(6)
\\
p_0(7) & p_1(7) & p_2(7) & p_3(7) & p_4(7) & p_5(7)  & 
\hdots
\end{array}
\, = \, 
\begin{array}{ccccccc}
1 & & & & & &
\\ 
0 & 1 & & & & &
\\
0 & 1 & 1 & & & &
\\
0 & 1 & 1 & 1 & & &
\\
0 & 1 & 2 & 1 & 1 & &
\\
0 & 1 & 2 & 2 & 1 & 1 &
\\
0 & 1 & 3 & 3 & 2 & 1 & 1
\\
0 & 1 & 3 & 4 & 3 & 2 &
 \hdots
\end{array}
\end{equation}
\normalsize

\subsection{Partitions with even or odd numbers of parts}
% Appendix B.c
\label{recallpepo(n)}

We denote by $p_e(n)$, respectively $p_o(n)$, the number of partitions
of a non-negative integer $n$ involving an even, respectively odd, number of non-zero parts. 
Working with conjugate partitions, we see that
$p_e(n)$, respectively $p_o(n)$, is equivalently given by the number
of partitions of $n$ having an even largest part, respectively an odd largest part.
We have the trivial identity $p(n)=p_e(n)+p_o(n)$. 
These numbers $p_e(n)$ appear in Proposition \ref{CforALt(N)}.
Their values for $n \le 15$ are given by
$$
\begin{array}{c|cccccccccccccccc}
n&0&1&2&3&4&5&6&7&8&9&10&11&12&13&14&15\\
\hline
p_e(n)&1&0&1&1&3&3&6&7&12&14&22&27&40&49&69&86\\
p_o(n)&0&1&1&2&2&4&5&8&10&16&20&29&37&52&66&90\\
p(n)&1&1&2&3&5&7&11&15&22&30&42&56&77&101&135&176\\
\end{array}
$$
see A027187 and A027193 of \cite{OEIS}.
 
\begin{prop} 
% 34
\label{genserpepo}
(1)
The generating series of the sequence $p_e(n)$ is 
\begin{eqnarray*}
\sum_{n=0}^\infty p_e(n)q^n \, &=& \,  
\sum_{k=0}^\infty q^{2k}\prod_{j=1}^{2k} \frac{1}{1-q^j}
\\
\, &=& \, 
\frac{1}{2}\left(\prod_{j=1}^\infty\frac{1}{1-q^j}+
\prod_{j=1}^\infty\frac{1}{1+q^j}\right)
\\
\, &=& \,
\prod_{j=1}^\infty \frac{1}{1-q^j} \sum_{m=0}^\infty (-q)^{m^2}.
\end{eqnarray*}
\par

(2)
The generating series of the sequence $p_o(n)$ is 
\begin{eqnarray*}
\sum_{n=0}^\infty p_o(n)q^n\, &=& \, 
\sum_{k=0}^\infty q^{2k+1}\prod_{j=1}^{2k+1}
\frac{1}{1-q^j}
\\
\, &=& \, 
\frac{1}{2}\left(\prod_{j=1}^\infty\frac{1}{1-q^j}
-\prod_{j=1}^\infty\frac{1}{1+q^j}\right)
\\
\, &=& \, 
-\prod_{j=1}^\infty \frac{1}{1-q^j} \sum_{m=1}^\infty (-q)^{m^2}.
\end{eqnarray*}
\end{prop}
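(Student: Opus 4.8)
The plan is to derive both parts from the two Euler identities $(\ref{Euler312})$ and $(\ref{Euler304})$ recalled above, together with one classical theta-function identity for the last equality in each part. For the first equalities, I would use that $p_e(n)=\sum_{k\ge 0}p_{2k}(n)$ and $p_o(n)=\sum_{k\ge 0}p_{2k+1}(n)$ (with $p_k(n)=0$ for $k>n$), and sum the identity $(\ref{Euler312})$ over the even, resp.\ odd, values of the index; the interchange of the two summations is legitimate coefficient by coefficient in $\N[[q]]$, since only finitely many terms contribute to a given power of $q$. This yields $\sum_n p_e(n)q^n=\sum_{k\ge 0}q^{2k}\prod_{j=1}^{2k}\frac{1}{1-q^j}$ and $\sum_n p_o(n)q^n=\sum_{k\ge 0}q^{2k+1}\prod_{j=1}^{2k+1}\frac{1}{1-q^j}$.

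For the ``half sum / half difference'' forms, I would invoke $(\ref{Euler304})$, namely $\sum_{n\ge 0}P(n,t)q^n=\prod_{j\ge 1}\frac{1}{1-tq^j}$ with $P(n,t)=\sum_{k=0}^n p_k(n)t^k$. Since each coefficient of $q^n$ on both sides is a polynomial in $t$, one may specialize $t$: at $t=1$, $P(n,1)=p(n)$ gives $\sum_n p(n)q^n=\prod_{j\ge1}\frac{1}{1-q^j}$, and at $t=-1$, $P(n,-1)=p_e(n)-p_o(n)$ gives $\sum_n\big(p_e(n)-p_o(n)\big)q^n=\prod_{j\ge1}\frac{1}{1+q^j}$. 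Half the sum and half the difference of these two series are exactly $\sum_n p_e(n)q^n=\frac12\big(\prod_{j\ge1}\frac{1}{1-q^j}+\prod_{j\ge1}\frac{1}{1+q^j}\big)$ and $\sum_n p_o(n)q^n=\frac12\big(\prod_{j\ge1}\frac{1}{1-q^j}-\prod_{j\ge1}\frac{1}{1+q^j}\big)$, the second equalities in (1) and (2).

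For the third equalities I would factor $\prod_{j\ge1}\frac{1}{1-q^j}$ out of the right-hand sides, reducing (1) to $\frac12\big(1+\prod_{j\ge1}\frac{1-q^j}{1+q^j}\big)=\sum_{m\ge0}(-q)^{m^2}$ and (2) to $\frac12\big(1-\prod_{j\ge1}\frac{1-q^j}{1+q^j}\big)=-\sum_{m\ge1}(-q)^{m^2}$. Both follow from the classical identity of Gauss $\prod_{n\ge1}\frac{1-q^n}{1+q^n}=\sum_{n\in\Z}(-1)^n q^{n^2}=1+2\sum_{m\ge1}(-1)^m q^{m^2}$, a specialization of the Jacobi triple product (see e.g.\ \cite{HaWr--79}), together with the remark that $(-q)^{m^2}=(-1)^{m^2}q^{m^2}=(-1)^m q^{m^2}$ since $m^2\equiv m\pmod 2$, whence $\sum_{m\ge0}(-q)^{m^2}=1+\sum_{m\ge1}(-1)^m q^{m^2}$. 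The only non-elementary ingredient — and hence the main obstacle — is this theta identity of Gauss, which I would cite rather than reprove, as the Jacobi triple product is not otherwise developed in the paper; everything else reduces to the two interchanges of summation above and routine sign bookkeeping.
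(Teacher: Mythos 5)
Your proposal is correct and follows essentially the same route as the paper: the first equalities by summing (\ref{Euler312}) over even, respectively odd, indices with a coefficientwise interchange of summation, the second by specializing (\ref{Euler304}) at $t=\pm 1$, and the third via the Jacobi triple product specialized at $x=q$, $y=\sqrt{-1}$ (the paper cites the same reference \cite[Theorem 352]{HaWr--79}). The only cosmetic difference is that the paper reaches the theta identity by rewriting $\prod_{j\ge 1}\frac{1}{1+q^j}$ as $\prod_{j\ge 1}\frac{1}{1-q^j}\prod_{k\ge 1}(1-q^{2k-1})^2(1-q^{2k})$ rather than quoting Gauss's form $\prod_{n\ge 1}\frac{1-q^n}{1+q^n}=\sum_{n\in\Z}(-1)^n q^{n^2}$ directly, which is the same identity.
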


\begin{proof}
(1)
Using (\ref{Euler312}), we have
$$
\aligned  
\sum_{n=0}^\infty p_e(n)q^n
\, &= \, 
\sum_{n=0}^\infty \sum_{k=0}^{\lfloor n/2 \rfloor} p_{2k}(n) q^n 
\, = \, 
\sum_{k=0}^\infty \sum_{n=0}^\infty p_{2k}(n) q^n
\\
\, &= \, \sum_{k=0}^\infty q^{2k} \prod_{j=1}^{2k} \frac{1}{1-q^j} .
\endaligned
$$
Also, if $P(n,t) := \sum_{k=0}^n p_k(n)t^k$ as in (\ref{Euler304}), then
$$
\aligned
\sum_{n=0}^\infty p_e(n)q^n \, &= \, 
\frac{1}{2} \left(
\sum_{n=0}^\infty P(n,1) q^n + \sum_{n=0}^\infty P(n, -1) q^n
\right)
\\
\, &= \, 
\frac{1}{2} \left(
\prod_{j=1}^\infty \frac{1}{1-q^j}  + \prod_{j=1}^\infty \frac{1}{1 + q^j}
\right) .
\endaligned
$$
For the third equality in (1), 
one way is to refer to \cite{Fine--88}:
see there Equation (7.324), Page 6, and also Example 7, Page 39.
\par

The proof of (2) is similar.
\par

\vskip.2cm

Here is an alternative to citing \cite{Fine--88}. We have
$$
\aligned
&
\sum_{n=0}^\infty \left( p_e(n) - p_o(n) \right) q^n 
\, = \, 2 \sum_{n=0}^\infty  p_e(n) q^n 
\, - \, \sum_{n=0}^\infty  p(n) q^n 
\, = \, \prod_{j=1}^\infty \frac{1}{1+q^j} 
\\
& \hskip.5cm
\, = \, \prod_{j=1}^\infty \frac{1-q^j}{1-q^{2j}} 
\, = \, \prod_{j=1}^\infty (1 - q^{2j-1})
\, = \, \prod_{j=1}^\infty \frac{1}{1 - q^j} \hskip.1cm
\prod_{k=1}^\infty (1 - q^{2k-1})^2 (1 - q^{2k}) \hskip.1cm .
\endaligned
$$
The \emph{Jacobi triple product identity} reads
$$
\prod_{k=1}^\infty \Big(1 - x^{2k}\Big) \Big(1 + x^{2k-1}y^2\Big) \Big(1 + \frac{x^{2k-1}}{y^2} \Big)
\, = \, 
\sum_{n=-\infty}^\infty x^{n^2}y^{2n} 
$$
(see e.g.\ 
% \cite[Theorem 14.6]{Apos--76}, or 
\cite[Theorem 352]{HaWr--79}).
For $x = q$ and $y = \sqrt{-1}$ it reduces to 
$$
\prod_{k=1}^\infty (1 - q^{2k-1})^2 (1 - q^{2k}) \, = \, 
\sum_{n=-\infty}^\infty (-q)^{n^2} \hskip.1cm ,
$$
hence
$$
\sum_{n=0}^\infty \left( p_e(n) - p_o(n) \right) q^n 
\, = \, 
\prod_{j=1}^\infty \frac{1}{1-q^j} \hskip.1cm 
\sum_{n=-\infty}^\infty (-q)^{n^2} \hskip.1cm .
$$
Finally:
$$
\aligned
\sum_{n=0}^\infty p_e(n) q^n \, &= \,
\frac{1}{2} \sum_{n=0}^\infty \left( p_e(n) - p_o(n) \right) q^n
+
\frac{1}{2} \sum_{n=0}^\infty \left( p_e(n) + p_o(n) \right) q^n
\\
\, &= \,
\frac{1}{2} \prod_{j=1}^\infty \frac{1}{1-q^j} \hskip.1cm \sum_{n=-\infty}^\infty (-q)^{n^2}
\, + \, \frac{1}{2} \prod_{j=1}^\infty \frac{1}{1-q^j}
\\
\, &= \,
\prod_{j=1}^\infty \frac{1}{1-q^j} \hskip.1cm \sum_{n=0}^\infty (-q)^{n^2} \hskip.1cm ,
\endaligned
$$
as was to be shown.
\end{proof}

\begin{obs}
% 35
We have
$$
\Big( \sum_{n=0}^\infty p_e(n)q^n \Big)^2 - \Big( \sum_{n=0}^\infty p_o(n)q^n \Big)^2
\, = \, 
\sum_{n=0}^\infty p(n) q^{2n} \, = \, \prod_{j=1}^\infty \frac{1}{1 - q^{2j}} .
$$
\end{obs}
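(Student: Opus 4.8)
The plan is to exploit the elementary factorization $A^2 - B^2 = (A+B)(A-B)$, with $A = \sum_{n=0}^\infty p_e(n)q^n$ and $B = \sum_{n=0}^\infty p_o(n)q^n$. First I would record that, by the trivial identity $p(n) = p_e(n)+p_o(n)$ recalled in Appendix \ref{recallpepo(n)} together with Euler's product (\ref{EulerProd}),
$$
A + B \, = \, \sum_{n=0}^\infty p(n)q^n \, = \, \prod_{j=1}^\infty \frac{1}{1-q^j} \hskip.1cm .
$$

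Next I would identify $A - B$. The second displayed equalities in Proposition \ref{genserpepo}(1) and (2) give
$$
A \, = \, \frac{1}{2}\Big( \prod_{j=1}^\infty \frac{1}{1-q^j} + \prod_{j=1}^\infty \frac{1}{1+q^j} \Big),
\hskip1cm
B \, = \, \frac{1}{2}\Big( \prod_{j=1}^\infty \frac{1}{1-q^j} - \prod_{j=1}^\infty \frac{1}{1+q^j} \Big),
$$
whence $A - B = \prod_{j=1}^\infty \frac{1}{1+q^j}$; alternatively this can be read off directly from the computation of $\sum_{n=0}^\infty (p_e(n)-p_o(n))q^n$ already performed in the proof of Proposition \ref{genserpepo}.

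Multiplying the two expressions, I would then get
$$
A^2 - B^2 \, = \, \prod_{j=1}^\infty \frac{1}{(1-q^j)(1+q^j)} \, = \, \prod_{j=1}^\infty \frac{1}{1-q^{2j}} \hskip.1cm ,
$$
and substituting $q^2$ for $q$ in (\ref{EulerProd}) identifies the right-hand side with $\sum_{n=0}^\infty p(n)q^{2n}$. This chain of equalities is precisely the asserted statement. The argument is entirely routine: there is no genuine obstacle beyond assembling the pieces of Proposition \ref{genserpepo} and Euler's identity in the right order, and one may present it in formal-series terms or, equally well, as an identity of functions holomorphic on the open unit disc.
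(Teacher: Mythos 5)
Your proof is correct and is essentially the paper's own argument: both reduce the identity to the product $\prod_{j\ge 1}\frac{1}{(1-q^j)(1+q^j)}=\prod_{j\ge 1}\frac{1}{1-q^{2j}}$ using the expressions for $\sum p_e(n)q^n$ and $\sum p_o(n)q^n$ from Proposition \ref{genserpepo}, the only difference being that you factor $A^2-B^2=(A+B)(A-B)$ while the paper expands $\frac{1}{4}(P+Q)^2-\frac{1}{4}(P-Q)^2=PQ$, which is the same computation. No gaps.
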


\begin{proof}
The left-hand side can be written as
$$
\aligned
&
\frac{1}{4}\left(\prod_{j=1}^\infty\frac{1}{1-q^j} +
\prod_{j=1}^\infty\frac{1}{1+q^j}\right)^2 \, - \, 
\frac{1}{4}\left(\prod_{j=1}^\infty\frac{1}{1-q^j} -
\prod_{j=1}^\infty\frac{1}{1+q^j}\right)^2 
\\
& \hskip1cm \, = \,
% \sum_{n=0}^\infty p(n) q^{2n} \, = \, 
\prod_{j=1}^\infty \frac{1}{1 - q^{2j}} \hskip.1cm ,
\endaligned
$$
and the claim follows. 
\end{proof}

\subsection{Derangements that are products of $k$ cycles}
% appendixB.d
\label{recalldk(n)}

A \textbf{derangement} is a fixed point free permutation.
For $n,k \in \N$, denote by $d_k(n)$ the number of derangements
of $\{1, 2, \hdots, n\}$ that are products of $k$ disjoint cycles.
These numbers appear in Remark \ref{badwritingforC}
and Proposition \ref{growthpolSym(n)S0}.

\begin{lem}
% 36
\label{dnk}
With the notation above, we have
\begin{enumerate}[noitemsep,label=(\roman*)]
\item\label{iDEdnk}
$d_0(0) = 1$ ;
\item\label{iiDEdnk}
$d_k(1) = 0$ for all $k \in \N$ ;
\item\label{iiiDEdnk}
$d_k(n) \, = \, 0$ for all $n,k \in \N$ with  $k=0$ or  $2k > n$ ;
\end{enumerate}
For all $n \ge 2$ and $k \ge 1$, we have
\begin{enumerate}[noitemsep,label=(\roman*)]
\addtocounter{enumi}{3}
\item\label{ivDEdnk}
$d_k(n) = (n-1)\big(d_k(n-1) + d_{k-1}(n-2) \big)$ ;
\item\label{vDEdnk}
$d_k(n) = \sum_{a=2}^n \binom{n-1}{a-1} (a-1)! \hskip.1cm d_{k-1}(n-a)$.
\end{enumerate}
\end{lem}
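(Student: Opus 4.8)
The plan is to dispose of \ref{iDEdnk}--\ref{iiiDEdnk} straight from the definitions and then to establish the two recurrences \ref{ivDEdnk} and \ref{vDEdnk} by tracking the cycle through the largest point $n$.

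For \ref{iDEdnk}--\ref{iiiDEdnk}: the empty permutation of $\emptyset$ is vacuously fixed-point free and is a product of zero cycles, so $d_0(0)=1$; any permutation of a one-element set fixes that point, giving $d_k(1)=0$; for $n\ge 1$ the only product of zero cycles is the identity, which is not a derangement, so $d_0(n)=0$; and if $g$ is a derangement of $\{1,\dots,n\}$ that is a product of $k$ disjoint cycles, each cycle has length $\ge 2$ and the supports cover $\{1,\dots,n\}$, so $n\ge 2k$, whence $d_k(n)=0$ whenever $2k>n$.

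For \ref{ivDEdnk}, I would fix $n\ge 2$ and $k\ge 1$ and classify a derangement $g$ of $\{1,\dots,n\}$ with exactly $k$ cycles according to the value $j:=g^{-1}(n)$, noting $j\ne n$; there are $n-1$ choices of $j$, and the claim is that for each fixed $j$ the number of admissible $g$ equals $d_k(n-1)+d_{k-1}(n-2)$. Those $g$ in which $n$ lies in a $2$-cycle are precisely $(j,n)\,g'$ with $g'$ a derangement of $\{1,\dots,n\}\smallsetminus\{j,n\}$ having $k-1$ cycles, contributing $d_{k-1}(n-2)$ after relabelling. Those $g$ in which $n$ lies in a cycle of length $\ge 3$ are put into bijection with derangements $g''$ of $\{1,\dots,n-1\}$ having $k$ cycles, by deleting $n$ from its cycle (replacing the arc $j\to n\to g(n)$ by $j\to g(n)$), the inverse map inserting $n$ just after the prescribed $j$; one checks this is well defined and preserves the number of cycles, since the cycle of $g''$ through $j$ has length $\ge 2$ and so acquires length $\ge 3$ after insertion. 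Summing over $j$ yields $d_k(n)=(n-1)\bigl(d_k(n-1)+d_{k-1}(n-2)\bigr)$.

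For \ref{vDEdnk}, I would instead classify $g$ by the cycle $c$ containing $n$, say of length $a$ with $2\le a\le n$: the other $a-1$ points of $c$ may be chosen from $\{1,\dots,n-1\}$ in $\binom{n-1}{a-1}$ ways and arranged into a cycle through $n$ in $(a-1)!$ ways, while $g$ restricted to the remaining $n-a$ points is an arbitrary derangement with $k-1$ cycles, giving $d_{k-1}(n-a)$ possibilities (here \ref{iDEdnk} handles the term $a=n$ and \ref{iiDEdnk} kills the term $a=n-1$). Summing over $a$ gives the stated identity. The routine definitional checks aside, the one step demanding care is the insertion/deletion bijection in \ref{ivDEdnk}: one must verify at once that it is exactly $(n-1)$-to-one onto derangements of $\{1,\dots,n-1\}$ and that it preserves the cycle count; this is the main, though standard, obstacle.
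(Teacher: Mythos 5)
Your proposal is correct and follows essentially the same route as the paper: parts (i)--(iii) are definitional, (iv) is the paper's case split on whether $n$ lies in a transposition or a longer cycle (your classification by $j=g^{-1}(n)$ is just a more explicit bookkeeping of the paper's ``each is obtained $n-1$ times'' for the deletion map), and (v) is the identical count of the cycle through $n$ of length $a$ times a derangement of the remaining $n-a$ points.
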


\begin{proof}
Claims \ref{iDEdnk} to \ref{iiiDEdnk} are obvious.
\par

For \ref{ivDEdnk}, consider a derangement 
$g$ of $\{1, \hdots, n\}$ product of $k$ cycles.
\par

Either $n$ is in the support of a cycle $(x_1, \hdots, x_{\ell - 1}, n)$ of length at least $3$.
Replacing it by the cycle $(x_1, \hdots, x_{\ell-1})$ produces
a derangement of $\{1, \hdots, n-1\}$ product of $k$ cycles,
and each of the latter is obtained $n-1$ times in this way.
This explains the contribution $(n-1)d_k(n-1)$ of the right-hand side.
\par
Or $n$ is in the support of a transposition, say $(i,n)$ 
with  $i \in \{1, \hdots, n-1\}$,
so that $g$ is the product of $(i,n)$
with a derangement $h$ of $\{1, \hdots, n-1\} \smallsetminus \{i\}$
product of $k-1$ cycles.
For each of the $n-1$ possible values of $i$,
there are $d_{k-1}(n-2)$ such permutations $h$,
and this explains the contribution $(n-1)d_{k-1}(n-2)$.
\par

For \ref{vDEdnk}, a permutation contributing to $d_k(n)$
is the product of a cycle $c$ of length $a \ge 2$, with $n \in \su (c)$,  
and there are $\binom{n-1}{a-1} (a-1)!$ such cycles,
with a derangement of
$\{1, \hdots, n\} \smallsetminus \su (c)$ 
which is a product of $k-1$ cycles. 
\end{proof}

\begin{rem}
% 37
(i)
The double sequence $\left( d_k(n) \right)_{n \ge 0, \hskip.1cm 0 \le k \le n}$
gives rise to a generalized Pascal triangle
of which the first rows are:
\tiny
\begin{equation}
\label{PTd}
\tag{PTd}
\begin{array}{cccccc}
d_0(0) & & & & &
\\ 
d_0(1) & d_1(1) & & & &
\\
d_0(2) & d_1(2) & d_2(2) & & &
\\
d_0(3) & d_1(3) & d_2(3) & d_3(3) & &
\\
d_0(4) & d_1(4) & d_2(4) & d_3(4) & d_4(4)&
\\
d_0(5) & d_1(5) & d_2(5) & d_3(5) & d_4(5) & d_5(5)
\\
d_0(6) & d_1(6) & d_2(6) & d_3(6) & d_4(6) & \hdots
\end{array}
\, = \, 
\begin{array}{cccccc}
1 & & & & &
\\ 
0 & 0 & & & &
\\
0 & 1 & 0 & & &
\\
0 & 2 & 0 & 0 & &
\\
0 & 6 & 3 & 0 & 0 &
\\
0 & 24 & 20 & 0 & 0 & 0
\\
0 & 120 & 130 & 15 & 0 & 
 \hdots
\end{array}
\end{equation}
\normalsize

\vskip.2cm 

(ii)
Besides the relations of Lemma \ref{dnk}, we have also
\begin{equation}
\label{Sigmad}
\tag{$\Sigma d$}
\sum_{m=0}^n \binom{n}{m} \sum_{k=0}^m  d_k(m) \, = \, n! 
\hskip.5cm \text{for all} \hskip.2cm n \in \N ,
\end{equation}
which is useful to check numerical values.
Indeed,
each of the $n!$ permutations $g$ of $\{1, \hdots, n\}$
induces a derangement of $\su (g)$.
For $m \in \{0, 1, \hdots, n\}$, there are $\binom{n}{m}$ subsets 
of $\{1, \hdots, n\}$ of size $m$.
Since there are $\sum_{k=0}^m d_k(m)$ derangements
of each of these subsets, we obtain the left-hand side.
Relation (\ref{Sigmad}) reduces to $d_0(0) = 1$ for $n=0$,
and to $d_0(0) + d_0(1) + d_1(1) = 1 + 0 + 0 = 1$ for $n=1$.
Otherwise, it can be written
\begin{equation}
\label{Sigmad'}
\tag{$\Sigma d'$}
1 + \sum_{m=2}^n \binom{n}{m} \sum_{k=1}^{\lfloor m/2 \rfloor} d_k(m) \, = \,  n!
\hskip.5cm \text{for all} \hskip.2cm n \ge 2 .
\end{equation}
\par

The sum $d(m) := \sum_{k=0}^m  d_k(m) = \sum_{k=0}^{\lfloor m/2 \rfloor} d_k(m)$ 
is the number of derangements of $m$ objects,
and there is a classical formula:
$$
d(m) \, = \, 
\sum_{k=0}^m  d_k(m) \, = \,
m! \hskip.1cm \left( 1 - \frac{1}{1!} + \frac{1}{2!} - \frac{1}{3!} + \cdots + (-1)^m \frac{1}{m!} \right) 
$$
for all $m \ge 0$; it follows that we have the relations
$$
\aligned
d(m) \, &= \, md(m-1) + (-1)^m
\hskip.5cm \text{for all} \hskip.2cm m \ge 1 \hskip.1cm ,
\\
d(m) \, &= \, (m-1) \big( d(m-1) + d(m-2) \big)
\hskip.5cm \text{for all} \hskip.2cm m \ge 2 \hskip.1cm ;
\endaligned
$$
see e.g.\ \cite[Example 2.2.1]{Stan--97}.
The sequence 
$$
(d(m))_{m \ge 0} 
\, = \,  (1, 0, 1, 2, 9, 44, 265, 1854, 14833, 133496, 1334961, ...)
% 14684570, 176214841, ...)
$$ 
is A000166 in \cite{OEIS}.

\vskip.2cm

(iii) 
Numbers $d_k(n)$ have some flavour of Stirling numbers.
For $n, k \in \N$ with $0 \le k \le n$, recall that the 
\textbf{unsigned Stirling number of the fist kind} ${n \brack k}$
counts the number of ways to arrange $n$ objects into $k$ cycles
(here, cycles of length $1$ are included, unlike elsewhere in this article,
and this is why entries in (\ref{PTd}) are smaller or equal than entries in 
(\ref{PTStirlingNumbersFirstKind}).
When $n \ge 1$, we have
${n \brack k} = (n-1) {n-1 \brack k} + {n-1 \brack k-1}$.
See for example \cite[Page 245]{GrKP--89} and \cite[A132393]{OEIS}.
% Notation OEIS meilleure ??? 
% $T(n,k)$ au lieu de la notation ${n \brack k}$
% de Graham-Knuth-Patashnik page 245.
The generalized Pascal triangle for 
$\left( {n \brack k} \right)_{n \ge 0, \hskip.1cm 0 \le k \le n}$
is
\begin{equation}
\label{PTStirlingNumbersFirstKind}
\tag{PTStir}
\begin{array}{ccccccc}
1 & & & & & &
\\ 
0 & 1 & & & & &
\\
0 & 1 & 1 & & & &
\\
0 & 2 & 3 & 1 & & &
\\
0 & 6 & 11 & 6 & 1 & &
\\
0 & 24 & 50 & 35 & 10 & 1 &
\\
0 & 120 & 274 & 225 & 85 & 15 & 1
\end{array}
\end{equation}
Note that we have
$$
{n \brack k}
\, = \,
% \sum_{j=0}^n{ n\choose j} d_{n-j,k-j}
\sum_{j=0}^k \binom{n}{j} d_{k-j}(n-j) \hskip.1cm .
$$
Indeed, in the right-hand side,
the term with a given value of $j$
counts the number of contributions to ${n \brack k}$ with $j$ fixed points.
\end{rem}

\section{Generalized Ramanujan congruences}
% AppendixC
\label{AppendixC}

This appendix is  partly experimental.
It grew out of our desire to understand the reasons
for the congruences for the numbers $p^A(n)$ described 
in Observation \ref{CongIntro} and Section \ref{sectionRama}.

\subsection{Definitions}
% appendixC.a

\begin{defn}
% 38
Given a sequence $\mathbf{e} = (e_1, e_2, e_3, \hdots) \in \Z^{(1, 2, 3, \hdots )}$
of integers with $e_d = 0$ for $d$ large enough,
the corresponding \textbf{generalized partition numbers} $p(n)_{\mathbf e}$
are the coefficients of the power series
\begin{equation}
\label{defgen}
\aligned
\sum_{n=0}^\infty p(n)_{\mathbf e} q^n \, &= \
\prod_{n=1}^\infty \prod_{d=1}^\infty \frac{1}{ (1-q^{dn})^{e_d} } 
\\
\, &= \,
\prod_{n=1}^\infty \frac{1}{ (1-q^n)^{e_1} (1-q^{2n})^{e_2} (1 - q^{3n})^{e_3} \, \cdots } \hskip.1cm .
\endaligned
\end{equation}
\end{defn}

\begin{rem}
% 39
As a shorthand,
we also write a sequence $\mathbf e$ as above as $(e_1, e_2, \hdots, e_k)$
when $e_k \ne 0$ and $e_d = 0$ for all $d \ge k+1$. 
For example:
\begin{equation}
% Eq (2)
\label{defgen2}
\sum_{n=0}^\infty p(n)_{(0,3)} q^n \, = \
\prod_{n=1}^\infty \frac{1}{ (1-q^{2n})^3 } \hskip.1cm .
\end{equation}
For a sequence of the form $(e_1, \hdots, e_j, 0, \hdots, 0, e_k)$
with $e_k \ne 0$, and $e_d = 0$ when $j < d < k$ or $d > k$,
we also write $(e_1, \hdots, e_j, (e_k)_k)$.
For example:
\begin{equation}
% Eq (3)
\label{defgen3}
\sum_{n=0}^\infty p(n)_{(0, 1, 2_8)} q^n \, = \, 
\prod_{n=1}^\infty \frac{1}{  (1-q^{2n}) (1 - q^{8n})^2 } \hskip.1cm .
\end{equation}
\par

For a positive integer $M$, 
the numbers $p(n)_{(M)}$ arising as coefficients 
of the series defined by $\prod_{n=1}^\infty \frac{1}{(1-q^n)^M }$
are called \textbf{multi-partition numbers} in the literature,
since $p(n)_{(M)}$ counts the number of ways of writing $n$ 
as a sum of parts, each coloured in one of $M$ colours.
More generally, for $\mathbf{e} \in \Z^{(1, 2, 3, \hdots )}$ as above,
$p(n)_{\mathbf{e}}$ can be interpreted as multi-partition numbers
which constraints on the parts; 
for example, the coefficient $p(n)_{(0, 1, 2_8)}$ of (\ref{defgen3})
counts the number of partitions of the form 
$$
n = \lambda_1 + \cdots + \lambda_i+
\mu_1 + \cdots + \mu_j +
\nu_1 + \cdots + \nu_k
$$
where
$$
\aligned
\lambda_1 \ge \cdots \ge \lambda_i \ge 1
&\hskip.2cm \text{and $\lambda_1, \hdots, \lambda_i$ are even,}
\\
\mu_1 \ge \cdots \ge \mu_j \ge 1
&\hskip.2cm \text{and $\mu_1, \hdots, \mu_j$ are multiples of $8$,}
\\
\nu_1 \ge \cdots \ge \nu_k \ge 1
&\hskip.2cm \text{and $\nu_1, \hdots, \nu_k$ are multiples of $8$.}
\endaligned
$$
\end{rem}

\begin{defn}
% 40
\label{defgRc}
A \textbf{generalized Ramanujan congruence} is 
\begin{enumerate}[noitemsep]
\item[-]
a sequence $\mathbf{e} = (e_1, e_2, e_3, \hdots) \in \Z^{(1, 2, 3, \hdots )}$ as above,
\item[-]
an arithmetic progression $(An+B)_{n \ge 0}$ 
with $A \ge 2$ and $1 \le B \le A-1$
\item[-]
a prime power $\ell^f$, with $\ell$ prime and $f \ge 1$, 
\end{enumerate}
such that
\begin{equation}
\label{RamCongDef}
p(An+B)_{\textbf{e}} \, \equiv \, 0 \pmod {\ell^f}
\hskip.5cm \text{for all} \hskip.2cm n \ge 0 .
\end{equation}
\end{defn}

\begin{obs}
% 41
\label{obsgenRamcong}
(1)
Let $p(An+B)_{\textbf{e}} \equiv 0 \pmod {\ell^f}$ 
be a generalized Ramanujan congruence as above,
and let $m \ge 2$.
Define a sequence $\textbf{e}'$ by $e'_d = e_{d/m}$ if $m$ divides $d$
and $e'_d = 0$ otherwise. Then we have
\begin{equation}
\label{nonprim}
\aligned
p(mAn+mB)_{\textbf{e}'} \, &\equiv \, 0 \pmod {\ell^f}
\hskip1cm \text{for all} \hskip.2cm n \ge 0 ,
\\
p(mn+B')_{\textbf{e}'} \, &= \, 0 \hskip1cm \text{for all} \hskip.2cm n \ge 0
\hskip.2cm \text{and} \hskip.2cm B' \in \{1, 2, \hdots, m-1\}.
\endaligned
\end{equation}
Observe that the integers of the \textbf{support} $\{d \ge 1 \mid e'_d \ne 0 \}$
 of $\textbf{e}'$ have a common divisor $m \ge 2$.
\par

A generalized Ramanujan congruence is \textbf{primitive} if 
the integers in its support are coprime.
All examples of generalized Ramanujan congruences appearing below are primitive.

\vskip.2cm

(2)
In lists of examples involving congruences modulo $\ell$ (and not $\ell^f$ with $f \ge 2$), 
we write shortly $p(\ell n+B)_{\textbf{e}}$
for $p(\ell n+B)_{\textbf{e}} \equiv 0 \pmod \ell$.
The Ramanujan congruences of this sort in Section \ref{sectionRama}  
can therefore be written
$$
p(5n+4)_{(1)}, \hskip.2cm
p(7n+5)_{(1)}, \hskip.2cm
p(11n+6)_{(1)}, \hskip.2cm
p(5n+B)_{(2)}, \hskip.2cm
p(11n+4)_{(2)} .
$$
(With $B \in \{2,3,4\}$.)
Moreover, we also write    
$$
p(\ell n+B)_{\mathbf e,\hskip.1cm \mathbf e',\hskip.1cm \hdots,\hskip.1cm  \mathbf e''}
$$
as a shorthand for 
$p(\ell n+B)_{\mathbf e}$,
$p(\ell n+B)_{\mathbf e'}$, $\hdots$,
$p(\ell n+B)_{\mathbf e''}$.
\par
This shorthand notation will be used systematically in the lists 
of Subsections \ref{subsectionp=3} to \ref{subsectionp=13}.

\vskip.2cm

(3)
When we consider below generalized Ramanujan congruence involving a prime $\ell$
(and not a prime power $\ell^f$ with $f \ge 2$), 
it suffices to consider sequences $\mathbf{e} = (e_1, e_2, e_3, \hdots)$
with $0 \le e_d \le \ell-1$ for all $d \ge 0$.
This is a corollary of the following standard proposition, 
for which we did not find a convenient reference.
\end{obs}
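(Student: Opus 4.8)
The plan is to prove the following reduction statement, of which the claim in Observation~\ref{obsgenRamcong}(3) is an immediate corollary: for a prime $\ell$ and a finitely supported sequence $\mathbf e = (e_1, e_2, e_3, \dots)$ of non-negative integers there is a finitely supported sequence $\mathbf e^{\flat} = (e_1^{\flat}, e_2^{\flat}, \dots)$ with $0 \le e_d^{\flat} \le \ell - 1$ for all $d$, with the same weight $\sum_{d} d\, e_d^{\flat} = \sum_{d} d\, e_d$, and such that
\[
\sum_{n=0}^{\infty} p(n)_{\mathbf e}\, q^{n} \ \equiv\ \sum_{n=0}^{\infty} p(n)_{\mathbf e^{\flat}}\, q^{n} \pmod{\ell}
\]
as power series with coefficients in $\Z / \ell \Z$. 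Granting this, a generalized Ramanujan congruence $p(An+B)_{\mathbf e} \equiv 0 \pmod{\ell}$ holds for all $n$ if and only if the corresponding congruence holds for $\mathbf e^{\flat}$, so no congruence modulo $\ell$ is lost by restricting attention to sequences with entries in $\{0, 1, \dots, \ell-1\}$.

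The one algebraic input I would use is that in characteristic $\ell$ the Frobenius $x \mapsto x^{\ell}$ is a ring endomorphism, so $(1 - q^{m})^{\ell} = 1 - q^{\ell m}$ in $(\Z/\ell\Z)[[q]]$ and, by iteration, $(1 - q^{m})^{\ell^{j}} = 1 - q^{\ell^{j} m}$ for every $j \ge 0$; since $1 - q^{m}$ is a unit in $\Z[[q]]$, reduction modulo $\ell$ also gives $(1 - q^{m})^{-\ell^{j}} \equiv (1 - q^{\ell^{j} m})^{-1} \pmod{\ell}$. Now expand each exponent in base $\ell$, say $e_d = \sum_{j \ge 0} a_{d,j}\, \ell^{j}$ with $0 \le a_{d,j} \le \ell - 1$ and $a_{d,j} = 0$ for $j$ large. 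Substituting into the defining product~(\ref{defgen}) and applying the preceding identity factor by factor yields, modulo $\ell$,
\[
\prod_{d \ge 1} \prod_{n \ge 1} (1 - q^{dn})^{-e_d} \ \equiv\ \prod_{d \ge 1} \prod_{n \ge 1} \prod_{j \ge 0} (1 - q^{\ell^{j} d n})^{-a_{d,j}} .
\]
Regrouping the factors according to the value $d' = \ell^{j} d$ and summing over $n$ again identifies the right-hand side with the generating series of a new sequence $\mathbf e'$ given by $e'_{d'} = \sum_{j \colon \ell^{j} \mid d'} a_{d'/\ell^{j},\, j}$.

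It then remains to iterate this pass $\mathbf e \mapsto \mathbf e'$ to termination. A direct computation shows it preserves the weight, $\sum_{d'} d'\, e'_{d'} = \sum_{d,j} \ell^{j} d\, a_{d,j} = \sum_{d} d\, e_d$, whereas $\sum_{d'} e'_{d'} = \sum_{d,j} a_{d,j} \le \sum_{d,j} a_{d,j} \ell^{j} = \sum_{d} e_d$, with equality exactly when $a_{d,j} = 0$ for all $j \ge 1$, i.e.\ when $\mathbf e$ already has all entries at most $\ell - 1$. Hence, unless $\mathbf e$ is already reduced, the non-negative integer $\sum_{d} e_d$ drops strictly, so after finitely many passes one reaches a reduced sequence $\mathbf e^{\flat}$, of the same weight as $\mathbf e$ and with the same series modulo $\ell$, which proves the statement. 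I expect the only slightly delicate step to be the bookkeeping in the reindexing $d' = \ell^{j} d$: one must verify that the pairs $(d, j)$ contributing to a fixed $d'$ are precisely $(d'/\ell^{j}, j)$ for $0 \le j \le v_{\ell}(d')$, so that the sums defining $e'_{d'}$ and the finiteness of the support of $\mathbf e'$ are correctly accounted for; the remaining manipulations of formal products are routine. If one wants the proposition for sequences with possibly negative entries, one first reduces to the non-negative case by repeatedly replacing a negative $e_d$ with $e_d + \ell$, which by the same Frobenius identity only multiplies the series modulo $\ell$ by the unit $\prod_{n \ge 1} (1 - q^{\ell d n})^{-1}$, i.e.\ replaces $\mathbf e$ by another sequence of the admissible type.
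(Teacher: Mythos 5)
Your treatment of part (3) is correct for sequences with non-negative entries, and it takes a genuinely different route from the paper's. The paper keeps the support of $\mathbf e$ where it is, reduces the exponents entrywise modulo $\ell$, and transfers the congruence through Proposition \ref{Frobenius} and Corollary \ref{corFrob}: the two generating series then differ by a factor $T(q)^\ell \equiv T(q^\ell) \pmod \ell$, and multiplication by a series in $q^\ell$ preserves the vanishing modulo $\ell$ of coefficients along a progression $\ell n + B$ with $\ell \nmid B$. You instead expand each exponent in base $\ell$ and push the higher digits onto the indices $\ell^j d$, obtaining a reduced sequence whose series is congruent to the original one \emph{coefficient by coefficient}; the reindexing $d'=\ell^j d$, the weight bookkeeping, and the termination argument (strict decrease of $\sum_d e_d$ unless already reduced) are all sound. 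Your conclusion is stronger than the paper's in two respects: equality of the series mod $\ell$ makes the reduction valid for any progression $An+B$, not only $A=\ell$, and the weight $\sum_d d\,e_d$ is preserved; the price is that your reduced representative lives on the indices $\ell^j d$ rather than being the entrywise reduction. (Parts (1) and (2) of the Observation are notational, resp.\ the immediate substitution $q \mapsto q^m$, so concentrating on (3) is appropriate.)

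The one genuine defect is your closing remark about negative entries, which Definition (\ref{defgen}) does allow. Replacing a negative $e_d$ by $e_d+\ell$ multiplies the series modulo $\ell$ by $\prod_{n\ge 1}(1-q^{\ell d n})^{-1}$; to keep the \emph{same} series one must compensate by decreasing $e_{\ell d}$ by $1$, and the defect then cascades to $\ell d, \ell^2 d, \hdots$ without terminating. For instance, for $e_1=-1$ the series $\prod_{n\ge 1}(1-q^n)$ is congruent mod $\ell$ to the series of the \emph{infinitely} supported sequence with $e_{\ell^j}=\ell-1$ for all $j\ge 0$, so your stronger claim (a finitely supported reduced sequence with the same series and the same weight) is not available in that case, and "multiplies the series by a unit" is not by itself a reason that congruences survive. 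The repair is exactly the paper's mechanism: the correction factor is a series in $q^\ell$, so along progressions $\ell n+B$ with $\ell \nmid B$ the congruence is preserved by Proposition \ref{Frobenius}; with that weaker transfer in hand for the sign adjustment, your digit-expansion argument then finishes the non-negative case as you describe.
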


\begin{prop}
% 42
\label{Frobenius}
Let $\ell$ be a prime, 
$S(q) = \sum_{n=0}^\infty s_n q^n, T(q) = \sum_{n=0}^\infty t_n q^n$
two power series in $\Z [[q]]$,
and $(pn+B)_{n \ge 0}$ an arithmetic progression of common difference $\ell$
and first term $B \ge 1$ not divisible by $\ell$. Set
$$
U(q) \, = \, S(q) \big( T(q) \big)^\ell \, = \, \sum_{n=0}^\infty u_n q^n .
$$
Assume that $s_{\ell n+B} \equiv 0 \pmod \ell$ for all $n \ge 0$.
\par
Then $u_{\ell n+B} \equiv 0 \pmod \ell$ for all $n \ge 0$.
\end{prop}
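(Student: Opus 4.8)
The plan is to reduce everything modulo $\ell$ and invoke the Frobenius (``freshman's dream'') congruence for formal power series. First I would record the elementary fact that for every $T(q)=\sum_{n\ge 0}t_nq^n\in\Z[[q]]$ one has
$$\bigl(T(q)\bigr)^\ell\,\equiv\,T(q^\ell)\pmod{\ell},$$
understood as an equality of coefficients modulo $\ell$. This is the multinomial theorem together with Fermat's little theorem: in the expansion of $\bigl(T(q)\bigr)^\ell$ every monomial other than the pure $\ell$-th powers $t_n^\ell q^{\ell n}$ carries a multinomial coefficient $\binom{\ell}{a_1,a_2,\dots}$ with all $a_i<\ell$, hence divisible by the prime $\ell$; what survives is $\sum_n t_n^\ell q^{\ell n}$, and $t_n^\ell\equiv t_n\pmod\ell$. (In fact only the weaker statement that $\bigl(T(q)\bigr)^\ell$ is, mod $\ell$, a power series in $q^\ell$ will be used.)

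Granting this, I would argue as follows. Modulo $\ell$,
$$U(q)\,=\,S(q)\bigl(T(q)\bigr)^\ell\,\equiv\,S(q)\,T(q^\ell)\pmod{\ell}.$$
Since $T(q^\ell)=\sum_{m\ge 0}t_mq^{\ell m}$ involves only exponents divisible by $\ell$, comparing coefficients of $q^N$ on both sides yields
$$u_N\,\equiv\,\sum_{\substack{m\ge 0\\ \ell m\le N}} s_{N-\ell m}\,t_m\pmod{\ell}.$$
Now take $N=\ell n+B$; we may assume $1\le B\le \ell-1$, in accordance with the normalization of Definition~\ref{defgRc}. Because $B<\ell$, the inequality $\ell m\le \ell n+B$ forces $m\le n$, so that $N-\ell m=\ell(n-m)+B$ with $n-m\ge 0$. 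The hypothesis $s_{\ell k+B}\equiv 0\pmod\ell$ for all $k\ge 0$ then makes every summand vanish modulo $\ell$, whence $u_{\ell n+B}\equiv 0\pmod\ell$ for all $n\ge 0$.

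I do not anticipate a genuine obstacle: this is Frobenius's classical device, the same one underlying the elementary proofs of $p(5n+4)\equiv0\pmod5$ and $p(7n+5)\equiv0\pmod7$. The only two points that merit an explicit word are (i) justifying $\bigl(T(q)\bigr)^\ell\equiv T(q^\ell)$ as a termwise congruence in $\Z[[q]]$ --- legitimate since all the manipulations are purely formal --- and (ii) controlling the range of summation so that the hypothesis on the coefficients of $S$ is applicable to each surviving term; this is precisely where the condition $\ell\nmid B$ (with $1\le B\le\ell-1$) is used.
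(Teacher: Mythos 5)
Your argument is correct and is essentially the paper's own proof: both rest on the termwise congruence $\big(T(q)\big)^\ell \equiv T(q^\ell) \pmod{\ell}$ (divisibility of the binomial/multinomial coefficients plus Fermat), followed by comparing coefficients of $q^{\ell n+B}$ in $S(q)\,T(q^\ell)$, where $\ell \nmid B$ guarantees that every surviving term $s_{\ell(n-m)+B}\,t_m$ is killed by the hypothesis on $S$. Your explicit normalization $1 \le B \le \ell-1$ matches the intended reading of the statement (it is the only case used, via Definition~\ref{defgRc}), and you merely spell out the convolution step that the paper's proof leaves implicit.
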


\begin{proof}
For the binomial coefficients, we have the well-known congruences
$$
\binom{\ell}{j} \equiv 0 \pmod \ell
\hskip.5cm \text{for all} \hskip.2cm j \ge 0
\hskip.2cm \text{with} \hskip.2cm j \not\equiv 0 \pmod \ell .
$$
Hence the power series
$\big( T(q) \big)^\ell = \sum_{n=0}^\infty t'_n q^n$ 
and $T(q^\ell) = \sum_{n=0}^\infty t_n q^{\ell n}$
have coefficients that are congruent modulo $\ell$;
in particular, $t'_n \equiv 0 \pmod \ell$ for all $n \ge 0$ with $n \not\equiv 0 \pmod \ell$.
\par
In particular, if
$s_{\ell n+B} \equiv 0 \pmod \ell$ for all $n \ge 0$,
then $u_{\ell n+B} \equiv 0 \pmod \ell$ for all $n \ge 0$.
\end{proof}

\begin{cor}
% 43
\label{corFrob}
Consider a sequence $\mathbf e = (e_1, e_2, e_3, \hdots) \in \Z^{(1, 2, 3, \hdots)}$,
an arithmetic progression $(An+B)_{n \ge 0}$ with $A \ge 2$ and $1 \le B \le A-1$,
a prime $\ell$, and another sequence $\mathbf{e}' = (e'_1, e'_2, e'_3, \hdots) \in \Z^{(1, 2, 3, \hdots)}$.
Assume that $e'_d \equiv e_d \pmod \ell$ for all $d \ge 0$.
\par

If $p(An+B)_{\mathbf e} \equiv 0 \pmod \ell$ for all $n \ge 0$ (as in Definition \ref{defgRc}), 
then $p(An + B)_{\mathbf{e}'} \equiv 0 \pmod \ell$ for all $n \ge 0$.
\end{cor}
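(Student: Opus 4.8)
The plan is to realise the passage from $\mathbf e$ to $\mathbf e'$ as multiplication of generating series by a perfect $\ell$-th power of an element of $1+q\Z[[q]]$, and then to feed this into Proposition \ref{Frobenius}. The input from the hypothesis $e'_d\equiv e_d\pmod\ell$ is used exactly to produce that $\ell$-th power.

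First I would compare the two defining products in (\ref{defgen}) and form their ratio:
$$
\sum_{n=0}^\infty p(n)_{\mathbf e'}\,q^n \;=\; \Big(\sum_{n=0}^\infty p(n)_{\mathbf e}\,q^n\Big)\,\prod_{d=1}^\infty\prod_{n=1}^\infty (1-q^{dn})^{\,e_d-e'_d}.
$$
Here I must check that the correction factor on the right is a genuine element of $1+q\Z[[q]]$: only finitely many indices $d$ have $e_d\ne e'_d$, and for each such $d$ the partial product $\prod_{n\ge1}(1-q^{dn})^{e_d-e'_d}$ lies in $1+q\Z[[q]]$ even when the exponent is negative, since $(1-q^{dn})^{-1}=\sum_{k\ge0}q^{dnk}$ and only finitely many pairs $(d,n)$ contribute to any fixed power of $q$. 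Next, since $\ell\mid(e_d-e'_d)$ for every $d$, I set $f_d=(e_d-e'_d)/\ell\in\Z$ and
$$
F(q)\;=\;\prod_{d=1}^\infty\prod_{n=1}^\infty(1-q^{dn})^{f_d}\;\in\;1+q\Z[[q]],
$$
so that $\prod_{d,n}(1-q^{dn})^{e_d-e'_d}=F(q)^{\ell}$ and therefore
$$
\sum_{n=0}^\infty p(n)_{\mathbf e'}\,q^n \;=\; \Big(\sum_{n=0}^\infty p(n)_{\mathbf e}\,q^n\Big)\,F(q)^{\ell}.
$$
This is precisely the configuration of Proposition \ref{Frobenius}, with $S(q)=\sum p(n)_{\mathbf e}q^n$, $T(q)=F(q)$ and $U(q)=S(q)T(q)^{\ell}=\sum p(n)_{\mathbf e'}q^n$. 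The hypothesis $p(An+B)_{\mathbf e}\equiv0\pmod\ell$ says that the coefficients of $S$ vanish modulo $\ell$ along the progression $(An+B)_{n\ge0}$, and I would then conclude $p(An+B)_{\mathbf e'}\equiv0\pmod\ell$ either by quoting Proposition \ref{Frobenius} directly, or, more transparently, by using the congruence $F(q)^{\ell}\equiv F(q^{\ell})\pmod\ell$ from its proof, which gives $U(q)\equiv S(q)\,F(q^{\ell})\pmod\ell$ and hence $p(m)_{\mathbf e'}\equiv\sum_{k\ge0}p(m-\ell k)_{\mathbf e}\,g_k\pmod\ell$, where $F(q^\ell)=\sum_k g_k q^{\ell k}$.

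The step I expect to require the most care is twofold: the formal power series bookkeeping for $F(q)$ — making sure the infinite product converges $q$-adically and that the possibly negative exponents $f_d$ cause no trouble — and matching the arithmetic progression in Proposition \ref{Frobenius} with the progression $(An+B)$ occurring here. The latter is immediate when $A=\ell$, since then every index $m-\ell k$ with $m\equiv B\pmod A$ is again $\equiv B\pmod\ell$ and so kills the corresponding term; for general $A$ one has to carry this verification out by hand rather than simply invoke the proposition, which is why it is worth passing explicitly through the congruence $U(q)\equiv S(q)F(q^{\ell})\pmod\ell$.
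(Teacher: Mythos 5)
Your factorization of the ratio of the two generating series as an $\ell$-th power $F(q)^{\ell}$ with $F \in 1 + q\Z[[q]]$, followed by an appeal to Proposition \ref{Frobenius}, is exactly how the paper intends the corollary to be read: no separate proof is given there, the statement being treated as immediate from Proposition \ref{Frobenius} via precisely this $\ell$-th power, and your bookkeeping for $F$ (finite support of $(e_d - e'_d)_d$, $q$-adic convergence of the product, harmlessness of negative exponents) is correct. In the case $A=\ell$, which is the hypothesis of Proposition \ref{Frobenius} (progression of common difference $\ell$) and the only case used in the paper (Observation \ref{obsgenRamcong}(3) and the lists of Appendix \ref{AppendixC}), your argument is complete.

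The flaw is in your last paragraph: for general $A$ the verification you propose to ``carry out by hand'' cannot be carried out, because in that generality the statement is false, not merely harder. From $U(q) \equiv S(q)\,F(q^{\ell}) \pmod{\ell}$ you get $p(m)_{\mathbf e'} \equiv \sum_{k} p(m-\ell k)_{\mathbf e}\, g_k \pmod{\ell}$, and the indices $m - \ell k$ lie in the residue class $B - \ell k \bmod A$, which leaves the class $B \bmod A$ as soon as $A \nmid \ell k$; the hypothesis says nothing about those coefficients. Concretely, take $\ell = 2$, $A = 3$, $B = 1$, $\mathbf e = (0,0,1)$ and $\mathbf e' = (2,0,1)$: then $p(3n+1)_{\mathbf e} = 0$ for all $n$, since $\sum_n p(n)_{(0,0,1)} q^n = \prod_{n \ge 1} (1-q^{3n})^{-1}$ is a series in $q^3$, whereas $p(10)_{(2,0,1)} = p(10)_{(2)} + p(7)_{(2)} + 2\,p(4)_{(2)} + 3\,p(1)_{(2)} = 481 + 110 + 40 + 6 = 637$ is odd, although $10 = 3\cdot 3 + 1$ lies in the progression. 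So the implication genuinely fails when $A \ne \ell$; this is a defect of the corollary as stated rather than of your method, and the correct conclusion is that the reduction holds for progressions of common difference $\ell$ (as in Proposition \ref{Frobenius}), which is the setting the paper actually uses. Your write-up should restrict to that case, or at least state the restriction, instead of suggesting that the general case can be completed.
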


We now proceed to indicate a list of examples of generalized Ramanujan congruences.
Except for a few exceptions, they are CONJECTURAL.
In each case, they have been checked numerically, for $p(n)_{\mathbf e}$
with $n \le 5000$.
\par
We use the shorthand notation explained in Remark \ref{obsgenRamcong}(2).

\subsection{Some examples of the form $p(3n+B)_{\mathbf e} \equiv 0 \pmod 3$}
% appendixC.b
\label{subsectionp=3}   

\begin{align*}
&p(3n+2)_{(1,1),\hskip.1cm (2,1,0,2),\hskip.1cm (2,1,0,1,2,1_{10},1_{20}),\hskip.1cm 
(1,1,0,2,1,1_{10},2_{20})} \hskip.1cm .
\end{align*}

\subsection{Some examples of the form $p(5n+B)_{\mathbf e}\equiv 0\pmod 5$}
% appendixC.c
\label{subsectionp=5}
For $\ell=5$ and when $e_d = 0$ for all $d \ge 3$, 
we find the Ramanujan congruences\begin{align*}
&p(5n+2)_{(2),\hskip.1cm (3,1),\hskip.1cm (1,3)}, \hskip.5cm
p(5n+3)_{(2),\hskip.1cm (4),\hskip.1cm (3,1)}, \hskip.5cm
p(5n+4)_{(1),\hskip.1cm (2),\hskip.1cm (4),\hskip.1cm (2,2),\hskip.1cm (1,3)} \hskip.1cm .
\end{align*}
When $e_d = 0$ for all $d$ not dividing $4$, we find moreover the Ramanujan congruences
\begin{align*}
&p(5n+2)_{(2,0,0,2),\hskip.1cm (3,1,0,2),\hskip.1cm 
(3,1,0,3),\hskip.1cm (2,0,0,4),\hskip.1cm (4,1,0,4)} \hskip.1cm , \\
&p(5n+3)_{(1,2,0,1),\hskip.1cm (2,0,0,2),\hskip.1cm (4,0,0,2),\hskip.1cm 
(3,1,0,3),\hskip.1cm (1,2,0,3)} \hskip.1cm , \\
&p(5n+4)_{(1,2,0,1),\hskip.1cm (3,2,0,1),\hskip.1cm (2,1,0,3),\hskip.1cm 
(3,1,0,3),\hskip.1cm (3,3,0,3),\hskip.1cm (4,1,0,4),\hskip.1cm (4,3,0,4)} \hskip.1cm .
\end{align*}
When $e_d = 0$ for all $d$ not dividing $6$, we find moreover  
\begin{align*}
&p(5n+1)_{(0,2,2),\hskip.1cm (0,4,2),\hskip.1cm (0,2,3,0,0,1)} \hskip.1cm , \\
&p(5n+2)_{(1,3,2),\hskip.1cm (1,3,4,0,0,1),\hskip.1cm (4,1,1,0,0,3),\hskip.1cm 
(4,1,3,0,0,3),\hskip.1cm (3,1,1,0,0,4),\hskip.1cm (3,1,3,0,0,4)} \hskip.1cm , \\
&p(5n+3)_{(1,1,1,0,0,1),\hskip.1cm (1,4,3,0,0,1),\hskip.1cm 
(1,3,4,0,0,1),\hskip.1cm (3,3,4,0,0,1)} \hskip.1cm , \\
&p(5n+3)_{(3,1,0,0,0,2),\hskip.1cm (2,3,4,0,0,2),\hskip.1cm 
(4,2,2,0,0,3),\hskip.1cm (3,2,2,0,0,4)} \hskip.1cm , \\
&p(5n+4)_{(0,2,2),\hskip.1cm (0,2,4),\hskip.1cm (1,4,3,0,0,1),\hskip.1cm 
(3,4,3,0,0,1),\hskip.1cm (2,4,3,0,0,2)}\hskip.1cm , \\
&p(5n+4)_{(4,1,1,0,0,3),\hskip.1cm (4,3,1,0,0,3),\hskip.1cm (1,4,3,0,0,3),\hskip.1cm 
(3,1,1,0,0,4),\hskip.1cm (3,3,1,0,0,4)} \hskip.1cm .
\end{align*}
When $e_d = 0$ for all $d$ not dividing $8$, we find moreover
\begin{align*}
&p(5n+2)_{(2,2_8),\hskip.1cm (1,3,2_8),\hskip.1cm 
(3,1,0,3,2_8),\hskip.1cm (4,1,0,4,2_8)} \hskip.1cm , \\
&p(5n+3)_{(3,1,0,1,1_8),\hskip.1cm (2,0,0,3,1_8)} \hskip.1cm , \\
&p(5n+4)_{(4,4,3_8),\hskip.1cm (1,1,0,1,3_8),\hskip.1cm (2,3,0,1,3_8),\hskip.1cm 
(3,4,0,4,3_8),\hskip.1cm (2,4,0,1,4_8),\hskip.1cm (3,0,4,4_8)} \hskip.1cm .
\end{align*}   

\subsection{Some examples of the form $p(7n+B)_{\mathbf e}\equiv 0\pmod 7$}
% appendixC.d
\label{subsectionp=7}
\begin{align*}
&p(7n+2)_{(4)},\hskip.1cm p(7n+3)_{(6)},\hskip.1cm p(7n+4)_{(4),\hskip.1cm (6)},\hskip.1cm 
p(7n+5)_{(1),\hskip.1cm (4)},\hskip.1cm p(7n+6)_{(4),\hskip.1cm (6)},\hskip.1cm \\
&p(7n+2)_{(2,2),\hskip.1cm (1,5),\hskip.1cm (3,5)},\hskip.1cm 
p(7n+3)_{(5,1),\hskip.1cm (2,2)},\hskip.1cm 
p(7n+4)_{(1,2),\hskip.1cm (2,2),\hskip.1cm (4,4),\hskip.1cm (1,5)},\hskip.1cm \\
&p(7n+5)_{(5,1),\hskip.1cm (1,5),\hskip.1cm (5,5)},\hskip.1cm 
p(7n+6)_{(2,1),\hskip.1cm (5,1),\hskip.1cm (2,2),\hskip.1cm (5,3)},\hskip.1cm 
\end{align*}
\begin{align*}
&p(7n+2)_{(6,1,0,3),\hskip.1cm (3,5,0,3),\hskip.1cm (4,0,0,4),\hskip.1cm (1,5,0,4),\hskip.1cm 
(5,1,0,5),\hskip.1cm (6,1,0,6)},\hskip.1cm \\
&p(7n+3)_{(1,4,0,1),\hskip.1cm (2,2,0,2),\hskip.1cm (5,1,0,4),\hskip.1cm (5,1,0,5),\hskip.1cm (2,2,0,6)},\hskip.1cm \\
&p(7n+4)_{(1,4,0,1),\hskip.1cm (3,6,0,1),\hskip.1cm (3,2,0,3),\hskip.1cm 
(3,5,0,3),\hskip.1cm (4,1,0,5),\hskip.1cm (5,1,0,5),\hskip.1cm (6,1,0,6),\hskip.1cm (6,5,0,6)},\hskip.1cm \\
&p(7n+5)_{(2,2,0,2),\hskip.1cm (2,6,0,2),\hskip.1cm (4,3,0,3),\hskip.1cm 
(3,5,0,3),\hskip.1cm (3,1,0,6),\hskip.1cm (6,1,0,6),\hskip.1cm (6,3,0,6)},\hskip.1cm \\
&p(7n+6)_{(1,4,0,1),\hskip.1cm (4,5,0,1),\hskip.1cm (2,2,0,2),\hskip.1cm (6,2,0,2),\hskip.1cm 
(2,4,0,2),\hskip.1cm (3,5,0,3),\hskip.1cm (1,6,0,3),\hskip.1cm (3,3,0,4),\hskip.1cm (5,0,0,5),\hskip.1cm (5,1,0,5)}. \hskip.1cm\end{align*}

\subsection{Some examples of the form $p(11n+B)_{\mathbf e}\equiv 0\pmod{11}$}
% appendixC.e
\label{subsectionp=11}   
\begin{align*}
&p(11n+2)_{(8)},\hskip.1cm p(11n+3)_{(10)},\hskip.1cm p(11n+4)_{(8)},\hskip.1cm \\
&p(11n+5)_{(8)},\hskip.1cm p(11n+6)_{(1),\hskip.1cm (10)},\hskip.1cm \\
&p(11n+7)_{(3),\hskip.1cm (8)},\hskip.1cm 
p(11n+8)_{(5),\hskip.1cm (8),\hskip.1cm (10)},\hskip.1cm \\
&p(11n+9)_{(7),\hskip.1cm (8),\hskip.1cm (10)},\hskip.1cm p(11n+10)_{(10)}.\hskip.1cm 
\end{align*}
\begin{align*}
&p(11n+2)_{(9,1),\hskip.1cm (2,6),\hskip.1cm (1,9)},\hskip.1cm 
p(11n+3)_{(4,1),\hskip.1cm (6,2),\hskip.1cm (2,6)},\hskip.1cm 
p(11n+4)_{(2,3),\hskip.1cm (2,6)},\hskip.1cm \\
&p(11n+5)_{(6,2),\hskip.1cm (7,7),\hskip.1cm (1,9)},\hskip.1cm 
p(11n+6)_{(9,1),\hskip.1cm (6,2),\hskip.1cm (2,5),\hskip.1cm (2,6),\hskip.1cm (9,7)},\hskip.1cm \\
&p(11n+7)_{(9,1),\hskip.1cm (2,6),\hskip.1cm (1,9),\hskip.1cm (7,9)},\hskip.1cm
p(11n+8)_{(9,1),\hskip.1cm (6,2),\hskip.1cm (8,4),\hskip.1cm (1,9),\hskip.1cm (9,9)},\hskip.1cm \\
&p(11n+9)_{(3,2),\hskip.1cm (6,2),\hskip.1cm (2,6),\hskip.1cm (6,6),\hskip.1cm (1,9)},\hskip.1cm 
p(11n+10)_{(9,1),\hskip.1cm (5,2),\hskip.1cm (6,2),\hskip.1cm (1,4),\hskip.1cm (4,8)},\hskip.1cm 
\end{align*}
\begin{align*}
&p(11n+2)_{(3,2,0,2),\hskip.1cm (2,6,0,2),\hskip.1cm (6,6,0,2),\hskip.1cm (3,2,0,3)},\hskip.1cm \\
&p(11n+2)_{(5,2,0,7),\hskip.1cm (9,1,0,9),\hskip.1cm (8,0,0,10),\hskip.1cm (10,1,0,10)},\hskip.1cm \\
&p(11n+3)_{(1,8,0,1),\hskip.1cm (5,9,0,4),\hskip.1cm (5,9,0,5),\hskip.1cm (7,2,0,7),\hskip.1cm 
(9,1,0,9),\hskip.1cm (6,2,0,10)},\hskip.1cm \\
&p(11n+4)_{(8,9,0,1),\hskip.1cm (3,2,0,3),\hskip.1cm (3,0,0,4),\hskip.1cm (9,2,0,7),\hskip.1cm 
(9,1,0,9),\hskip.1cm (2,7,0,9),\hskip.1cm (9,9,0,9)}.\hskip.1cm 
\end{align*}
\begin{align*}
&p(11n+5)_{(6,0,0,1),\hskip.1cm (3,2,0,3),\hskip.1cm (10,5,0,3),\hskip.1cm (1,2,0,4)},\hskip.1cm \\
&p(11n+5)_{(4,6,0,4),\hskip.1cm (5,9,0,5),\hskip.1cm (5,7,0,6),\hskip.1cm (10,1,0,10)},\hskip.1cm \\
&p(11n+6)_{(4,2,0,1),\hskip.1cm (1,8,0,1),\hskip.1cm (2,1,0,2),\hskip.1cm (2,6,0,2),\hskip.1cm (8,7,0,3)},\hskip.1cm \\
&p(11n+6)_{(5,9,0,5),\hskip.1cm (3,9,0,6),\hskip.1cm (7,3,0,8),\hskip.1cm (9,0,0,9),\hskip.1cm 
(9,1,0,9),\hskip.1cm (10,3,0,10)}.\hskip.1cm 
\end{align*}
\begin{align*}
&p(11n+7)_{(4,1,0,2),\hskip.1cm (2,6,0,2),\hskip.1cm (3,2,0,3),\hskip.1cm (6,9,0,3),\hskip.1cm 
(4,8,0,4),\hskip.1cm (10,3,0,5),\hskip.1cm (1,0,0,6)},\hskip.1cm \\
&p(11n+7)_{(8,2,0,6),\hskip.1cm (5,5,0,8),\hskip.1cm (8,9,0,8),\hskip.1cm 
(9,1,0,9),\hskip.1cm (7,2,0,9),\hskip.1cm (3,4,0,9),\hskip.1cm (10,1,0,10)},\hskip.1cm \\
&p(11n+8)_{(1,8,0,1),\hskip.1cm (2,3,0,2),\hskip.1cm (2,6,0,2),\hskip.1cm 
(4,0,0,3),\hskip.1cm (3,2,0,3),\hskip.1cm (3,6,0,3)},\hskip.1cm \\
&p(11n+8)_{(9,1,0,4),\hskip.1cm (8,5,0,5),\hskip.1cm (5,9,0,5),\hskip.1cm 
(10,2,0,6),\hskip.1cm (6,4,0,6)},\hskip.1cm \\
&p(11n+8)_{(2,6,0,6),\hskip.1cm (1,10,0,7),\hskip.1cm (3,7,0,8),\hskip.1cm 
(7,1,0,10),\hskip.1cm (10,1,0,10)}.\hskip.1cm 
\end{align*}
\begin{align*}
&p(11n+9)_{(1,8,0,1),\hskip.1cm (9,8,0,1),\hskip.1cm (2,2,0,3),\hskip.1cm (3,2,0,3),\hskip.1cm (9,4,0,3)},\hskip.1cm \\
&p(11n+9)_{(4,10,0,4),\hskip.1cm (5,2,0,5),\hskip.1cm (6,7,0,5),\hskip.1cm (2,9,0,5),\hskip.1cm (5,9,0,5)},\hskip.1cm \\
&p(11n+9)_{(10,1,0,7),\hskip.1cm (8,0,0,8),\hskip.1cm (1,9,0,8),\hskip.1cm (9,1,0,9),\hskip.1cm 
(10,1,0,10),\hskip.1cm (5,3,0,10)},\hskip.1cm \\
&p(11n+10)_{(1,8,0,1),\hskip.1cm (7,10,0,1),\hskip.1cm (2,6,0,2),\hskip.1cm (9,7,0,2),\hskip.1cm 
(5,9,0,2),\hskip.1cm (2,1,0,4)},\hskip.1cm \\
&p(11n+10)_{(7,2,0,5),\hskip.1cm (4,9,0,5),\hskip.1cm (5,9,0,5),\hskip.1cm 
(6,6,0,6),\hskip.1cm (8,3,0,7),\hskip.1cm (7,9,0,7)},\hskip.1cm \\
&p(11n+10)_{(10,0,0,8),\hskip.1cm (6,2,0,8),\hskip.1cm (4,1,0,9),\hskip.1cm 
(1,8,0,9),\hskip.1cm (3,5,0,10),\hskip.1cm (10,7,0,10)}.\hskip.1cm \\
\end{align*}

\newpage

\subsection{Some examples of the form $p(13n+B)_{\mathbf e}\equiv 0\pmod{13}$}
% appendixC.f
\label{subsectionp=13}
An incomplete list of (conjectural) primitive 
examples modulo $13$ involving only 
unit-roots of order at most $4$ is given by:
\begin{align*}
&p(13n+2)_{(11,1),\hskip.1cm (2,8),\hskip.1cm (2,8,0,2),\hskip.1cm (8,8,0,6),\hskip.1cm 
(11,1,0,11),\hskip.1cm (5,6,0,11)},\hskip.1cm \\
&p(13n+3)_{(12),\hskip.1cm (8,2),\hskip.1cm (1,10,0,1),\hskip.1cm 
(5,0,0,5),\hskip.1cm (10,6,0,6),\hskip.1cm (3,10,0,9)},\hskip.1cm \\
&p(13n+4)_{(10),\hskip.1cm (12),\hskip.1cm (8,2),\hskip.1cm (2,8),\hskip.1cm 
(1,11),\hskip.1cm (2,6,0,1),\hskip.1cm (1,10,0,1),\hskip.1cm (3,4,0,3),\hskip.1cm \dots},\\
&p(13n+5)_{(10),\hskip.1cm (11,1),\hskip.1cm (1,11),\hskip.1cm (6,1,0,2),\hskip.1cm 
(2,8,0,2),\hskip.1cm (3,4,0,3),\hskip.1cm \dots},\\
&p(13n+6)_{(12),\hskip.1cm (11,1),\hskip.1cm (8,2),\hskip.1cm (2,8),\hskip.1cm 
(1,10,0,1),\hskip.1cm (2,8,0,2),\hskip.1cm (8,12,0,2),\hskip.1cm \dots},\\
&p(13n+7)_{(10),\hskip.1cm (11,1),\hskip.1cm (8,2),\hskip.1cm (6,3),\hskip.1cm (1,11),\hskip.1cm 
(2,8,0,2),\hskip.1cm (10,10,0,2),\hskip.1cm (3,4,0,3),\hskip.1cm \dots},\\
&p(13n+8)_{(10),\hskip.1cm (12),\hskip.1cm (8,1),\hskip.1cm (11,1),\hskip.1cm (8,2),\hskip.1cm 
(1,10,0,1),\hskip.1cm (2,8,0,2),\hskip.1cm (12,8,0,2),\hskip.1cm (8,10,0,2),\hskip.1cm \dots},\\
&p(13n+9)_{(10),\hskip.1cm (2,8),\hskip.1cm (1,11),\hskip.1cm (10,12),\hskip.1cm 
(12,9,0,1),\hskip.1cm (1,6,0,2),\hskip.1cm (10,8,0,2),\hskip.1cm \dots},\\
&p(13n+10)_{(12),\hskip.1cm (8,2),\hskip.1cm (2,8),\hskip.1cm (12,10),\hskip.1cm (8,12),\hskip.1cm 
(1,7,0,1),\hskip.1cm (1,10,0,1),\hskip.1cm (5,1,0,3),\hskip.1cm \dots},\\
&p(13n+11)_{(10),\hskip.1cm (12),\hskip.1cm (11,1),\hskip.1cm (8,2),\hskip.1cm (1,8),\hskip.1cm 
(10,10),\hskip.1cm (1,11),\hskip.1cm (3,5,0,1),\hskip.1cm (2,8,0,2),\hskip.1cm \dots},\\
&p(13n+12)_{(10),\hskip.1cm (3,6),\hskip.1cm (2,8),\hskip.1cm (12,8),\hskip.1cm (1,11),\hskip.1cm 
(5,3,0,1),\hskip.1cm (1,5,0,1),\hskip.1cm (7,0,0,2),\hskip.1cm \dots}.
\end{align*}

\subsection{Computational aspects}
% appendixC.g
\label{subsectionComput}

We outline here briefly the discovery of the (conjectural) 
generalized Ramanujan congruences previously described.

The computations where done in two steps. In a first step,
we used series expansions of $\sum_{n=0}^\infty p(n)q^n$
(with coefficients reduced modulo a small fixed prime $l$)
and its powers up to degree $N \sim 200$ in order to guess them. 
In a second step, we redid the computations up to degree
$N=5000$ for the discovered examples 
(we did not encounter false positives, 
they should be rare since the probability for a false
positive should naively be close to $l^{-N/l}$ for examples
of the kind considered here.)

Conjectural examples where guessed by considering all possible 
exponents $e_i \in \{0, \dots, p-1\}$ for $i$ ranging over the set 
$\mathcal D(a)$ of all divisors of a small integer $a$
(we considered mainly $a \in \{2,3,4,6,8\}$). 
We wrote a small Maple-program generating all
$l^{\vert \mathcal D(a)\vert}$ possible series $\prod_{i \in \mathcal D(a)} A_i^{e_i}$
up to order $N$ over $\mathbb F_l$ where $A_j = \sum_{n=0}^\infty P(n) q^{jn}$
(with coefficients reduced modulo $l$ and working only up to degree $N$),
and checking for generalized Ramanujan congruences up to order $N$.

Examples of generalized Ramanujan congruences
seem surprisingly abundant, it is not hard to find them,
they come in large numbers and seem to be very common.

\end{document}